\newcommand\CA{{\mathscr A}}
\newcommand\FF{{\mathscr F}} 
\newcommand\CH{{\mathcal H}}
\newcommand\CS{{\mathcal S}} 
\newcommand\CIF{{\mathcal {IF}}} 
\newcommand\CIFAC{{\mathcal {IF\!AC}}}
\newcommand\CI{{\mathcal I}}  
\newcommand\CJ{{\mathcal J}}
\newcommand\CR{{\mathcal R}}
\newcommand\R{{\varrho}}
\newcommand\RR{{\mathscr R}} 
\newcommand\CW{{\mathcal W}}
\newcommand\BBK{{\mathbb K}}
\newcommand\BBR{{\mathbb R}}
\newcommand\BBZ{{\mathbb Z}}
\newcommand\codim{\operatorname{codim}}
\newcommand\Der{{\operatorname{Der}}}
\newcommand\Fix{{\operatorname{Fix}}}
\newcommand\pdeg{\operatorname{pdeg}}
\newcommand\rk{\operatorname{rk}}
\newcommand\hgt{\operatorname{ht}}
\newcommand\inverse{^{-1}}
\numberwithin{equation}{section}
\theoremstyle{plain}
\newtheorem{lemma}[equation]{Lemma}
\newtheorem{theorem}[equation]{Theorem}
\newtheorem{conjecture}[equation]{Conjecture}
\newtheorem{condition}[equation]{Condition}
\newtheorem{corollary}[equation]{Corollary}
\newtheorem{proposition}[equation]{Proposition}
\theoremstyle{definition}
\newtheorem{defn}[equation]{Definition}
\newtheorem{remark}[equation]{Remark}
\newtheorem{remarks}[equation]{Remarks}
\newtheorem{example}[equation]{Example}
\subjclass[2010]{20F55, 52B30, 52C35, 14N20}
\begin{document}

\title[Arrangements of ideal type]
{Arrangements of ideal type}

\author[G. R\"ohrle]{Gerhard R\"ohrle}
\address
{Fakult\"at f\"ur Mathematik,
Ruhr-Universit\"at Bochum,
D-44780 Bochum, Germany}
\email{gerhard.roehrle@rub.de}

\keywords{
Root system, Weyl arrangement, 
arrangement of ideal type, 
free arrangement, 
inductively free arrangement,
supersolvable arrangement,
inductively factored arrangement}

\allowdisplaybreaks

\begin{abstract}
In 2006 Sommers and Tymoczko defined so called  
arrangements of ideal type $\CA_\CI$ stemming
from ideals $\CI$ in the set of positive 
roots of a reduced root system.
They showed in a case by case argument that 
$\CA_\CI$ is free if the root system is
of classical type or $G_2$ and
conjectured that this is also the case for 
all types.
This was established only very recently 
in a uniform manner 
by Abe, Barakat, Cuntz, Hoge and Terao.
The set of non-zero exponents of the free arrangement 
$\CA_\CI$ is given by the dual of the height
partition of the roots in the 
complement of $\CI$ in the set of positive roots,
generalizing the Shapiro-Steinberg-Kostant theorem which
asserts that  the 
dual of the height partition of the set of positive roots gives the 
exponents of the associated Weyl group.

Our first aim in this paper is to investigate a stronger
freeness property of the $\CA_\CI$.
We show that all $\CA_\CI$ are inductively free, 
with the possible exception of some cases in type $E_8$.

In the same paper from 2006, Sommers and Tymoczko 
define a Poincar\'e polynomial $\CI(t)$ associated with each ideal $\CI$
which generalizes the Poincar\'e polynomial $W(t)$
for the underlying Weyl group $W$.
Solomon showed that $W(t)$ 
satisfies a product 
decomposition depending on the exponents of $W$
for any Coxeter group $W$.
Sommers and Tymoczko showed in a case by case analysis 
in type $A_n$, $B_n$ and $C_n$,
and some small rank exceptional types that 
a similar factorization property holds for 
the Poincar\'e polynomials $\CI(t)$
generalizing the formula of Solomon for $W(t)$.
They conjectured that their multiplicative formula
for $\CI(t)$ holds in all types.
In our second aim to investigate this conjecture further,
the same inductive tools we develop to obtain inductive freeness
of the $\CA_\CI$ are also employed.
Here we also show that this conjecture holds 
inductively in almost all instances
with only a small number of possible exceptions.
\end{abstract}

\maketitle

\setcounter{tocdepth}{1}
\tableofcontents


\section{Introduction}

Much of the motivation 
for the study of arrangements 
of hyperplanes comes 
from Coxeter arrangements. 
They consist of the reflecting hyperplanes 
associated with the 
reflections of the underlying Coxeter group.
While Coxeter arrangements are a well studied 
subject, 
subarrangements of the latter 
are considerably less well understood. 
In this paper we study certain arrangements which are 
associated with ideals in the set of positive roots of 
a reduced root system, so called 
\emph{arrangements of ideal type}  $\CA_\CI$,
Definition \ref{def:idealtype},
cf.~\cite[\S 11]{sommerstymoczko}.

\subsection{Ideals in $\Phi^+$}
\label{sect:idealexponents}
Let $\Phi$ be an irreducible, reduced root system
and let $\Phi^+$ be the set of positive roots 
with respect to some set of simple roots $\Pi$.
An \emph{(upper) order ideal}, 
or simply \emph{ideal} for short, 
of  $\Phi^+$, is a subset $\CI$  of $\Phi^+$ 
satisfying the following condition: 
if $\alpha \in \CI$ and $\beta \in \Phi^+$ so that 
$\alpha + \beta \in \Phi^+$, then $\alpha + \beta \in \CI$.

Recall the standard partial ordering 
$\preceq$ on $\Phi$: $\alpha \preceq \beta$
provided $\beta - \alpha$ is a $\BBZ_{\ge0}$-linear combination 
of positive roots, or $\beta = \alpha$. Then $\CI$ is an ideal in $\Phi^+$
if and only if whenever 
$\alpha \in \CI$ and $\beta \in \Phi^+$ so that 
$\alpha \preceq \beta$, then $\beta \in \CI$.

Let $\beta$ be in $\Phi^+$. Then $\beta = \sum_{\alpha \in \Pi} c_\alpha \alpha$
for $c_\alpha \in \BBZ_{\ge0}$.
The \emph{height} of $\beta$ is defined to be $\hgt(\beta) = \sum_{\alpha \in \Pi} c_\alpha$.

Let $\CI \subseteq \Phi^+$ be an ideal and let 
\[
\CI^c := \Phi^+ \setminus \CI
\]
be its complement in 
$\Phi^+$. 
Further, define 
$\lambda_i :=|\{\alpha \in \CI^c \mid \hgt(\alpha) = i\}|$.
This gives the \emph{height partition} 
$\lambda_1 \ge \lambda_2 \ge \ldots $
of 
$\CI^c$.
Let $s = \lambda_1$, the number of simple roots in $\CI^c$.

\begin{defn}
[{\cite[Def.~3.2]{sommerstymoczko}}]
\label{def:idealexp}
The \emph{ideal exponents $m_i^\CI$} of an ideal $\CI$ in $\Phi^+$
are the parts of the dual of 
the height partition of $\CI^c$,
i.e.~$m_i^\CI := |\{\lambda_j \mid \lambda_j \ge s -i +1 \}|$, so that  
\begin{equation}
\label{eq:idealexp}
m_s^\CI \ge \ldots \ge m_1^\CI.
\end{equation}  
\end{defn}

Note that since $\lambda_1 > \lambda_2$, we have $m_1^\CI = 1$,
cf.~\cite[Prop.\ 3.1]{sommerstymoczko}.

This terminology is motivated as follows.
For $\CI = \varnothing$, we have $\CI^c = \Phi^+$.
A famous theorem asserts that 
the dual of the height partition of 
$\Phi^+$ gives the exponents of $W$.
This connection was first  discovered by Shapiro
(unpublished) and rediscovered independently
by Steinberg \cite[\S 9]{steinberg:reflectiongroups}. 
Kostant \cite{kostant:betti} was the first to provide 
a uniform proof.
Macdonald gave a proof using generating functions \cite{macdonald:coxeter}. 

\subsection{Arrangements of ideal type}
\label{sect:idealexponents}
Following \cite[\S 11]{sommerstymoczko}, 
we associate with an ideal $\CI$ in $\Phi^+$ the arrangement 
consisting of all hyperplanes with respect to the roots in $\CI^c$.
Let $\CA(\Phi)$ be the \emph{Weyl arrangement} of $\Phi$,
i.e., $\CA(\Phi) = \{ H_\alpha \mid \alpha \in \Phi^+\}$,
where $H_\alpha$ is the hyperplane in the Euclidean space
$V = \BBR \otimes \BBZ \Phi$ orthogonal to the root $\alpha$.

\begin{defn}[{\cite[\S 11]{sommerstymoczko}}]
\label{def:idealtype}
Let $\CI \subseteq \Phi^+$ be an ideal.
The \emph{arrangement of ideal type} associated with 
$\CI$ is the subarrangement $\CA_\CI$
of $\CA(\Phi)$ defined by 
\[
\CA_\CI := \{ H_\alpha \mid \alpha \in \CI^c\}.
\]
\end{defn}

It was shown by Sommers and Tymoczko \cite[Thm.\ 11.1]{sommerstymoczko}
that in case the root system is classical or of type $G_2$, 
each $\CA_\CI$ is free and
the non-zero exponents 
are given by the ideal exponents of $\CI$,
cf.~\eqref{eq:idealexp}.
The general case was settled
only recently in a uniform manner for all types by 
Abe, Barakat, Cuntz, Hoge, and Terao
in \cite[Thm.\ 1.1]{abeetall:weyl}. 

\begin{theorem}
[{\cite[Thm.\ 11.1]{sommerstymoczko}, \cite[Thm.\ 1.1]{abeetall:weyl}}]
\label{thm:ideals}
Let $\Phi$ be a reduced root system with 
Weyl arrangement $\CA = \CA(\Phi)$.
Then any subarrangement of $\CA$ of
ideal type $\CA_\CI$ is free
with the non-zero
exponents given by the ideal exponents 
$m_i^\CI$ of $\CI$. 
\end{theorem}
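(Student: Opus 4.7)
The plan is to prove this by induction on the maximal height $d$ of a root in $\CI^c$, invoking the Multiple Addition Theorem (MAT) of Abe--Barakat--Cuntz--Hoge--Terao at each inductive step. The base case $\CI = \Phi^+$ is trivial: $\CA_\CI$ is empty and there are no ideal exponents to match.

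For the inductive step, set $\CI' := \CI \cup \{\alpha \in \CI^c : \hgt(\alpha) = d\}$. Since every root of height exceeding $d$ already belongs to $\CI$, the added roots are maximal in $\CI^c$ under $\preceq$, so $\CI'$ is again an upper order ideal with $\hgt(\alpha) \leq d-1$ for every $\alpha \in (\CI')^c$. By the inductive hypothesis, $\CA_{\CI'}$ is free with exponents $m_i^{\CI'}$. A direct comparison of the height partitions $(\lambda_1, \ldots, \lambda_{d-1})$ of $(\CI')^c$ and $(\lambda_1, \ldots, \lambda_d)$ of $\CI^c$, together with their dualizations, shows that $m_i^\CI = m_i^{\CI'}$ except for the top $\lambda_d$ indices, at which $m_i^\CI = m_i^{\CI'} + 1 = d$. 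Because the height partition of $\CI^c$ is non-increasing, the top exponent $d-1$ of $\CA_{\CI'}$ occurs with multiplicity $\lambda_{d-1} \geq \lambda_d$, so MAT can in principle be applied to adjoin the $\lambda_d$ hyperplanes $B := \{H_\alpha : \alpha \in \CI^c, \hgt(\alpha) = d\}$, thereby concluding that $\CA_\CI = \CA_{\CI'} \cup B$ is free with exponents $m_i^\CI$.

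The hard part is the verification of the three geometric hypotheses of MAT for $B$: (i) the roots in $B$ are linearly independent, so that $X := \bigcap_{\alpha \in B} H_\alpha$ has codimension $\lambda_d$; (ii) $X$ is contained in no $H_\beta$ with $\beta \in (\CI')^c$; and (iii) no hyperplane of the ambient Weyl arrangement $\CA(\Phi)$ outside $B$ contains $X$. All three are pure statements about the interaction of the top-layer roots of $\CI^c$ with the rest of $\Phi^+$, and should follow from the upper-closedness of $\CI$ together with standard root-string considerations: any such dependence or incidence would allow one to build a positive root as a combination of $B$-roots with a lower-height root, which by the ideal property would have to lie in $\CI$, a contradiction. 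For the classical types and $G_2$, Sommers and Tymoczko verify freeness and identify the exponents by explicit combinatorial descriptions of ideals, effectively constructing a basis of derivations via Saito's criterion, side-stepping (i)--(iii). For the exceptional types $F_4$, $E_6$, $E_7$, $E_8$, a type-uniform proof of (i)--(iii) is the main content of the Abe--Barakat--Cuntz--Hoge--Terao argument, and this is where the genuinely new input resides.
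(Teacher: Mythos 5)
The paper does not actually prove Theorem~\ref{thm:ideals}: it is quoted with citations to Sommers--Tymoczko and to Abe--Barakat--Cuntz--Hoge--Terao (ABCHT), and the paper explicitly remarks that none of its own results depend on it. So there is no in-paper proof to compare against; what the paper does say about the ABCHT argument---a ``Multiple Addition Theorem'' (MAT) that adjoins an entire layer of hyperplanes at once to a free arrangement, applied inductively---matches the skeleton of your outline, and your analysis of the dual height partitions is correct.

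That said, your proposal is a sketch rather than a proof, and the gap is exactly where you flag it: you do not verify the MAT hypotheses for the top-height layer $B$, and merely assert they ``should follow from the upper-closedness of $\CI$ together with standard root-string considerations.'' That verification is the entire content of the ABCHT paper and is not routine. In addition, your stated hypothesis (iii) mis-paraphrases MAT: the actual third condition is the cardinality identity $|\CA_{\CI'}| - |\CA_{\CI'} \cup \{H_j\})^{H_j}| = e_\ell$ for each adjoined $H_j$, a counting statement about restrictions, not a non-containment statement about $X$ and the ambient Weyl arrangement (your (iii) as phrased also subsumes your (ii), which should have been a warning sign). Finally, you attribute the uniform verification mainly to the exceptional types, but the ABCHT argument is type-uniform across \emph{all} root systems; it supersedes the Sommers--Tymoczko case analysis rather than merely filling in the exceptional cases. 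As a road map to the cited result your outline points in the right direction, but the substance of the proof is absent.
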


The method of proof of 
\cite[Thm.\ 1.1]{abeetall:weyl}
entails a new general version
of Terao's seminal addition deletion theorem
(Theorem \ref{thm:add-del})
allowing for an entire set of hyperplanes to be added at once 
to a given free arrangement while retaining freeness
- given suitable circumstances - 
as opposed to adding just one hyperplane at a time.
In particular, this method implies that there is a total order on the set of 
hyperplanes in $\CA_\CI$ such that each term of the resulting 
chain of subarrangements of $\CA_\CI$ is itself free.
See \cite{abeterao:freefiltrations} for an application of this 
method in the context of Shi arrangements.

The fact that each $\CA_\CI$ is free with 
the non-zero exponents given by the ideal exponents 
further justifies the choice of terminology in 
Definition \ref{def:idealexp}. 
The proof of \cite[Thm.\ 1.1]{abeetall:weyl} is stunning, for 
not only does it generalize the aforementioned result by 
Shapiro-Steinberg-Kostant, it also gives a new uniform proof of the latter.

\medskip

Thanks to independent fundamental work of 
Arnold and Saito, see \cite[\S 6]{orlikterao:arrangements},  
the reflection arrangement $\CA(W)$ of any 
real reflection group $W$ is free.
In \cite[Cor.\ 5.15]{barakatcuntz:indfree}, 
Barakat and Cuntz 
showed that in fact $\CA(W)$  
is \emph{inductively free}, see Definition \ref{def:indfree}.
The most challenging case here is that of type $E_8$.
In view of these results and considering the method of proof of 
Theorem \ref{thm:ideals} in \cite[Thm.\ 1.1]{abeetall:weyl}, 
it is natural to ask whether the free subarrangements $\CA_\CI$ of 
$\CA(W)$ are also inductively free.
Our first aim is to show that all 
$\CA_\CI$ are indeed inductively free
with the possible exception of some instances only  
in type $E_8$, see Theorem \ref{thm:indfree}.

In recent work \cite{hultman:koszul}, Hultman 
characterized all $\CA_\CI$ that are \emph{supersolvable}, 
see Definition \ref{def:super}. 
Since, supersolvability
implies inductive freeness, see Theorem \ref{thm:superindfree}, 
the following result from \cite{hultman:koszul} readily 
provides a large collection of inductively free $\CA_\CI$.

\begin{theorem}
[{\cite[Thms.~6.6, 7.1]{hultman:koszul}}]
\label{thm:AnBn}
For $\Phi$ of type $A_n$, $B_n$, $C_n$, or $G_2$, 
each $\CA_\CI$ is supersolvable with the non-zero
exponents given by the ideal exponents 
$m_i^\CI$ of $\CI$.
\end{theorem}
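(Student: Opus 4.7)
The plan is to construct, for each arrangement of ideal type $\CA_\CI$ in the listed types, an explicit chain of modular elements in the intersection lattice $L(\CA_\CI)$. Once supersolvability has been established, the identification of the non-zero exponents with the ideal exponents $m_i^\CI$ follows immediately from Theorem~\ref{thm:ideals}: supersolvability implies freeness (via Theorem~\ref{thm:superindfree}), so the non-zero exponents of $\CA_\CI$ already determined in Theorem~\ref{thm:ideals} apply.

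Type $G_2$ is dispatched by direct inspection: the positive system has only six roots and hence only finitely many ideals $\CI$, and for each one exhibits a modular chain by hand. The same brute-force strategy covers any small-rank base cases arising in the induction.

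For the classical types I would argue by induction on the rank, using the standard realizations of the positive roots ($e_i-e_j$ in type $A_n$; $e_i \pm e_j$ together with $e_i$ in type $B_n$; and $e_i \pm e_j$ together with $2e_i$ in type $C_n$). In each case the full Weyl arrangement is well known to be supersolvable (the braid arrangement for type $A$), and the goal is to show that this property persists after deleting the hyperplanes indexed by $\CI$. The inductive step selects a hyperplane $H^\star = H_{\alpha^\star} \in \CA_\CI$ corresponding to a ``corner'' root $\alpha^\star$ of $\CI^c$, for instance a simple root of extremal index in a suitable labeling, such that (i) $H^\star$ is a modular atom of $L(\CA_\CI)$; (ii) the deletion $\CA_\CI \setminus \{H^\star\}$ is again an arrangement of ideal type, associated with the strictly larger ideal $\CI \cup \{\alpha^\star\}$; and (iii) the restriction $(\CA_\CI)^{H^\star}$ is isomorphic to an arrangement of ideal type in a root system of rank one less and of classical type. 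Iterating produces a maximal chain of modular flats, which is precisely the supersolvability condition.

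The main obstacle is item (i), the modularity of $H^\star$ in $L(\CA_\CI)$. This reduces to checking that whenever two roots $\alpha, \beta \in \CI^c$ are such that $H_{\alpha^\star} \cap H_\alpha \cap H_\beta$ has codimension two, the third root witnessing this rank-$2$ dependence also belongs to $\CI^c$. Precisely here one exploits the closure of $\CI$ under $\preceq$: the candidate third root is either $\alpha + \beta$, $\alpha - \beta$, or $\alpha^\star \pm \alpha$ (up to type-dependent variants), and the extremal choice of $\alpha^\star$ is designed to force this root to lie outside $\CI$. Types $B_n$ and $C_n$ need extra bookkeeping for short versus long roots (and for the special roots $e_i$, $2e_i$) to ensure that step (iii) lands inside the correct class of ideal subarrangements of a classical subsystem, typically obtained by deleting a simple root from the Dynkin diagram and restricting $\CI$ accordingly.
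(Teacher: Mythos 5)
Your high-level strategy (induction on rank, realize the arrangement of ideal type for a maximal parabolic subsystem as a piece of $\CA_\CI$, verify a linear-dependence condition among roots of $\CI^c$ using the ideal closure under $\preceq$) is the same strategy as the paper's, which proceeds by picking $\Phi_0$ of type $A_{n-1}$/$B_{n-1}$/$C_{n-1}$ and verifying Condition~\ref{cond:linear}. However, the technical mechanism you propose in the inductive step has a genuine gap.

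You say $H^\star$ should be a ``modular atom of $L(\CA_\CI)$'' and then plan to iterate via the deletion/restriction triple. But every hyperplane of an arrangement is automatically a modular element of rank $1$ (if $Y \subseteq H$ then $H+Y=H$, otherwise $H+Y=V$), so ``modular atom'' imposes no condition at all; and there is no supersolvability criterion of the form ``$H^\star$ modular atom and $\CA^{H^\star}$ supersolvable implies $\CA$ supersolvable'' --- indeed every rank-$3$ generic arrangement has all its restrictions of rank $\le 2$, hence trivially supersolvable, yet is not itself supersolvable. The deletion/restriction triple framework you invoke belongs to the addition-deletion theorem for freeness (Theorem~\ref{thm:add-del}), not to the Bj\"orner--Edelman--Ziegler characterization of supersolvability. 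What the argument actually needs is a modular flat of \emph{corank one}, i.e.\ an element $Z \in L(\CA_\CI)$ with $r(Z) = r(\CA_\CI)-1$ that is modular and whose \emph{localization} $(\CA_\CI)_Z$ is supersolvable; this is Theorem~\ref{thm:super} / Corollary~\ref{cor:modular-super}. In the paper this flat is $Z = T\bigl((\CA_\CI)_{X_0}\bigr)$ with $(\CA_\CI)_Z = \CA_{\CI_0}$ the arrangement of ideal type for $\Phi_0$, and modularity is Lemma~\ref{lem:condition-modular}, verified from Condition~\ref{cond:linear}. Note that the root-dependence check you describe in the final paragraph is precisely the test in Lemma~\ref{lem:modular1} for modularity of a \emph{corank-one} flat (two hyperplanes outside $\CA_Z$ plus one inside), so your computational plan is aimed at the right target; it is the identification of that target with a single hyperplane $H^\star$ rather than with the corank-one flat $Z$ (equivalently, the localization at the parabolic subsystem) that breaks the argument.

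Two smaller remarks. First, your item (iii) --- that the restriction $\CA_\CI^{H^\star}$ is isomorphic to an ideal-type arrangement of the smaller rank system --- is true, but this is a \emph{consequence} of the corank-one flat $Z$ being modular (Lemma~\ref{lem:linear} via Lemma~\ref{lem:modular2}), not something you can take as an independent hypothesis; it cannot be used to bootstrap the modularity you haven't yet established. Second, deducing the exponents from Theorem~\ref{thm:ideals} works but is heavier than needed and clashes with the paper's explicit point that its proofs of Theorems~\ref{thm:AnBn}--\ref{thm:indfree} do not rely on Theorem~\ref{thm:ideals}; in the paper the exponent statement falls out of Corollary~\ref{cor:modular-super} as $\exp\CA_\CI = \{\exp\CA_{\CI_0}, |\CA_\CI\setminus\CA_{\CI_0}|\}$.
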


\begin{remark}
\label{rem:ss}
For $\Phi$ of type $A_n$, every  
free subarrangement of $\CA$ is 
already supersolvable, \cite[Thm.\ 3.3]{edelmanreiner}.
Thus in that case, Theorem \ref{thm:AnBn} follows 
from Theorem \ref{thm:ideals}.

As opposed to type $A_n$, a
free subarrangement of the supersolvable arrangement of 
type $B_n$ need not be supersolvable in general, e.g.\ 
the Weyl arrangement of type $D_n$ 
is not supersolvable for $n \ge 4$,
cf.\ Theorem \ref{thm:ssW}(i).
So the fact that all $\CA_\CI$ in type $B_n$ 
and type $C_n$ are 
supersolvable is not a consequence of 
Theorem \ref{thm:ideals}.

We give a short proof of Theorem \ref{thm:AnBn} based on 
Theorem \ref{thm:I1I}(i) in Section \S \ref{s:idealtype}. 
\end{remark}

In contrast to the types covered in Theorem \ref{thm:AnBn}, 
there are always non-supersolvable arrangements 
of ideal type for the other Dynkin types,
e.g., for $\CI = \varnothing$, the Weyl arrangement $\CA_\CI = \CA(\Phi)$ 
itself is 
not supersolvable, see Theorem \ref{thm:ssW}; see also \cite{hultman:koszul}.
Nevertheless, in type $D_n$ we obtain the following.

\begin{theorem}
\label{thm:Dn}
Let $\Phi$ be of type $D_n$ for $n \ge 4$.
Then each $\CA_\CI$ is inductively free
with the non-zero
exponents given by the ideal exponents 
$m_i^\CI$ of $\CI$. 
\end{theorem}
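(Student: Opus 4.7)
The plan is to prove Theorem \ref{thm:Dn} by a double induction: outer on the rank $n$ (with the base case $n = 4$ handled directly, e.g.\ by a computer-assisted enumeration of the ideals in $\Phi^+$ for $D_4$ along the lines of the Barakat--Cuntz verification of inductive freeness for $\CA(W)$), and inner on the cardinality $|\CI^c|$. The inner base case $\CI = \Phi^+$ yields the empty arrangement $\CA_\CI = \varnothing$, which is inductively free, with exponents matching the empty tuple predicted by Definition \ref{def:idealexp}. For the inner inductive step, with $\CI \subsetneq \Phi^+$, I would choose a root $\alpha \in \CI^c$ that is maximal with respect to the partial order $\preceq$. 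Then $\CI' := \CI \cup \{\alpha\}$ is an upper order ideal of $\Phi^+$ with $|\CI'^c| < |\CI^c|$ and $\CA_{\CI'} = \CA_\CI \setminus \{H_\alpha\}$, so by the inner inductive hypothesis $\CA_{\CI'}$ is inductively free, with exponents given by the ideal exponents $m_i^{\CI'}$.

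To upgrade $\CA_{\CI'}$ to $\CA_\CI$ via the addition part of Theorem \ref{thm:add-del}, I would verify the two standard hypotheses. First, that the multiset $\exp(\CA_\CI^{H_\alpha})$ embeds into $\exp(\CA_{\CI'})$; this is a combinatorial comparison once one knows, by Theorem \ref{thm:ideals}, that both $\CA_\CI$ and $\CA_{\CI'}$ are free with exponents read off from the height partitions of $\CI^c$ and $\CI^c \setminus \{\alpha\}$, respectively. Second, and crucially, that the restriction $\CA_\CI^{H_\alpha}$ is itself inductively free. For the latter I would identify $\CA_\CI^{H_\alpha}$ with an arrangement of ideal type attached to the sub-root system $\Phi \cap \alpha^\perp$ of $\Phi$. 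For $\Phi$ of type $D_n$ and a suitable choice of $\alpha$ (typically a simple root at one of the three ends of the Dynkin diagram, or a low-height positive root whose orthogonal complement intersects $\Phi$ in a rank $n-1$ subsystem), $\Phi \cap \alpha^\perp$ decomposes as a product of root systems of types $D_{n-1}$ and $A_k$. The $D_{n-1}$ factor is handled by the outer induction on $n$, and the $A$-type factors are handled by Theorem \ref{thm:AnBn}; inductive freeness of $\CA_\CI^{H_\alpha}$ then follows, since inductive freeness is preserved under products of arrangements.

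The principal obstacle is showing that one can always choose the maximal $\alpha \in \CI^c$ so that the restriction $\CA_\CI^{H_\alpha}$ is genuinely of ideal type in the subsystem $\Phi \cap \alpha^\perp$; a priori one only gets that the restricted arrangement is a subarrangement of the Weyl arrangement of $\Phi \cap \alpha^\perp$, and some ideals $\CI$ may have no maximal element for which the restriction enjoys the desired ideal-type description. Here a careful case analysis is required, reflecting the geometry of $D_n$ roots and the combinatorics of ideals; in contrast to the cleaner supersolvability route available in types $A_n, B_n, C_n, G_2$, this route is blocked in type $D_n$ already by $\CA_\varnothing = \CA(\Phi)$, which is non-supersolvable for $n \ge 4$ (cf.\ Theorem \ref{thm:ssW}(i)). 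The expectation is that after possibly replacing a naive maximal choice of $\alpha$ by another suitable maximal or low-height element in $\CI^c$ when the naive choice fails, one can in each configuration recognise $\CA_\CI^{H_\alpha}$ explicitly as an ideal-type arrangement in a rank $n-1$ subsystem of Dynkin type $D_{n-1}$ or a product of $A$-components, and thereby close the induction with the exponents prescribed by Theorem \ref{thm:ideals}.
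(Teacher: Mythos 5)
Your plan diverges from the paper's proof in its key mechanism, and as written it has a genuine gap that I do not see how to fill.

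The central problem is the claim that one can ``identify $\CA_\CI^{H_\alpha}$ with an arrangement of ideal type attached to the sub-root system $\Phi \cap \alpha^\perp$''. This identification fails on two counts. First, for $\Phi$ of type $D_n$ and \emph{any} root $\alpha$, the orthogonal subsystem $\Phi \cap \alpha^\perp$ is of type $A_1 \times D_{n-2}$ (e.g.\ for $\alpha = e_1 - e_2$ it is $\{\pm(e_1+e_2)\} \cup D_{n-2}$), never of type $D_{n-1}$; the rank-$(n-1)$ Levi of type $D_{n-1}$ is a \emph{parabolic} subsystem, not an orthogonal complement, so it is not reached by your construction. Second, and more fundamentally, the restriction $\CA_\CI^{H_\alpha}$ is an arrangement inside the hyperplane $H_\alpha$ obtained by intersecting, and distinct hyperplanes $H_\beta, H_\gamma$ with $\beta, \gamma, \alpha$ linearly dependent collapse to the same element of $\CA_\CI^{H_\alpha}$; this restriction is in general \emph{not} the arrangement of a sub-root system, let alone one of ideal type. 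Already for the full $\CA(D_n)$ restricted to $H_{e_1-e_2}$ one gets $\CA(D_{n-1})$ \emph{together with} the extra hyperplane $x_2 = 0$, which is not a Weyl arrangement. You acknowledge this as ``the principal obstacle'', but offer only an ``expectation'' in place of an argument, and the expectation is not correct as stated. There is also an internal tension in your choices: you want $\alpha$ to be maximal in $\CI^c$ (so that $\CI \cup \{\alpha\}$ remains an ideal) yet ``typically a simple root at one of the three ends of the Dynkin diagram'' --- for a generic strictly positive ideal $\CI$ a maximal element of $\CI^c$ is a high root, not simple.

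The paper's proof works because it does not try to recognise every restriction as an orthogonal ideal-type arrangement. Its engine is Condition \ref{cond:linear} together with Lemma \ref{lem:condition-modular} and Theorem \ref{thm:I1I}: when $\Phi^c_0 \cap \CI^c$ is linearly ordered by height (taking $\Phi_0$ of type $D_{n-1}$ parabolic), the center of the localization $\CA_{\CI_0} = (\CA_\CI)_{X_0}$ is a modular element of rank $r(\CA_\CI) - 1$, so $\CA_\CI$ is built up from $\CA_{\CI_0}$ by adding \emph{all} of $\CA_\CI \setminus \CA_{\CI_0}$ in a single uniform induction table (Lemma \ref{lem:modular-indfree}); the restriction is then tautologically isomorphic to $\CA_{\CI_0}$ via Lemma \ref{lem:modular2}/\ref{lem:linear}, not via orthogonality. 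This disposes of all ideals except those for which both $\beta^{\pm} = e_1 \pm e_n$ lie in $\CI^c$. For that remaining family the paper really does do a single-hyperplane addition-deletion step, closely following Sommers--Tymoczko, and there the restriction $\bigl(\CA_\CI^{H_{\beta^+}}\bigr)^{H_{\beta^+} \cap H_{\beta^-}}$ is explicitly identified with an arrangement of ideal type in a $B_{n-2}$ root system (not a $D$-type one), which is inductively free by Theorem \ref{thm:AnBn}. Without the modular-element machinery you would have to replicate this delicate explicit identification for \emph{every} ideal, and the orthogonal-complement heuristic simply does not supply it. Your outer/inner induction skeleton is not unreasonable, and your appeal to Theorem \ref{thm:ideals} for the exponent bookkeeping is fine, but the restriction step is a gap that the naive orthogonality picture cannot close; you need either Condition \ref{cond:linear} with the modular-localization argument, or the Sommers--Tymoczko-style explicit restriction computation for the exceptional cases.
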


From the last two results and Theorem \ref{thm:superindfree}, 
the following is immediate.

\begin{theorem}
\label{thm:classical}
For $\Phi$ of classical type, 
each $\CA_\CI$ is inductively free
with the non-zero
exponents given by the ideal exponents 
$m_i^\CI$ of $\CI$. 
\end{theorem}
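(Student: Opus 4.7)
The plan is to derive Theorem \ref{thm:classical} as an immediate corollary of Theorems \ref{thm:AnBn} and \ref{thm:Dn} together with the implication that supersolvable arrangements are inductively free (Theorem \ref{thm:superindfree}); no new argument is needed beyond bookkeeping of cases.

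The irreducible classical Dynkin types are $A_n$, $B_n$, $C_n$, and $D_n$, and I would split accordingly. For $\Phi$ of type $A_n$, $B_n$, or $C_n$, Theorem \ref{thm:AnBn} already supplies the stronger assertion that $\CA_\CI$ is supersolvable with non-zero exponents equal to the ideal exponents $m_i^\CI$. Applying Theorem \ref{thm:superindfree} promotes supersolvability to inductive freeness; since both properties refine freeness, the exponent multiset is preserved, giving the desired conclusion in these three families. For $\Phi$ of type $D_n$ with $n \ge 4$, the claim is precisely Theorem \ref{thm:Dn}. The low-rank coincidences $D_3 \cong A_3$ and $D_2 \cong A_1 \times A_1$ (the latter being reducible) absorb any remaining degenerate instances into the type-$A$ case.

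Since the assembly is purely a logical combination, there is no genuine obstacle in the proof of Theorem \ref{thm:classical} itself; the substance is concentrated in the two inputs. The hard work has already been done, namely in Hultman's supersolvability argument underlying Theorem \ref{thm:AnBn} and, for type $D_n$, in the inductive-freeness analysis establishing Theorem \ref{thm:Dn} earlier in the paper. Consequently, what I would write for the proof is essentially a one-sentence observation covering the case split and citing these three results.
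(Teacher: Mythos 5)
Your proposal is correct and is exactly the paper's argument: the text right before Theorem \ref{thm:classical} states that it is immediate from Theorem \ref{thm:AnBn}, Theorem \ref{thm:Dn}, and Theorem \ref{thm:superindfree}, which is precisely the case split and citation chain you describe.
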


For arbitrary $\Phi$, we obtain the following.

\begin{theorem}
\label{thm:penultimate}
For $\theta$ the highest root in $\Phi^+$ and 
$\CI = \{\theta\}$, 
$\CA_\CI$ is inductively free
with the non-zero
exponents given by the ideal exponents 
$m_i^\CI$ of $\CI$.
\end{theorem}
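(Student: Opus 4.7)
\emph{Plan.} The plan is to apply the addition-deletion theorem (Theorem~\ref{thm:add-del}) to the pair $\CA(\Phi)\supset\CA_\CI$, which differ only in the single hyperplane $H_\theta$. First I verify the ideal exponents. Since $\theta$ is the unique positive root of maximal height $h-1$, where $h$ is the Coxeter number of $W$, removing $\theta$ from $\Phi^+$ affects only the last part $\lambda_{h-1}$ of the height partition of $\CI^c$, reducing it from $1$ to $0$ while leaving every other $\lambda_i$ unchanged. Dualizing, every ideal exponent $m_i^\CI$ coincides with the corresponding exponent $m_i$ of $W$, except that the largest is reduced from $h-1$ to $h-2$; writing $\exp(W)=(1,m_2,\ldots,m_{\ell-1},h-1)$, the ideal exponents are therefore $(1,m_2,\ldots,m_{\ell-1},h-2)$.

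Next I set up the addition-deletion triple. By the Barakat--Cuntz theorem \cite[Cor.\ 5.15]{barakatcuntz:indfree}, $\CA(\Phi)$ is inductively free with exponents $\exp(W)$, and by Theorem~\ref{thm:ideals}, $\CA_\CI$ is free with the ideal exponents just computed. Applying the standard addition-deletion theorem to the triple $(\CA(\Phi),\CA_\CI,\CA(\Phi)^{H_\theta})$ then forces the restriction $\CA(\Phi)^{H_\theta}$ to be free with exponents $(1,m_2,\ldots,m_{\ell-1})$, namely $\exp(W)$ with the top entry $h-1$ deleted.

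The main obstacle is upgrading freeness of $\CA(\Phi)^{H_\theta}$ to \emph{inductive} freeness; once this is done, the inductive-freeness version of addition-deletion gives inductive freeness of $\CA_\CI=\CA(\Phi)\setminus\{H_\theta\}$ with the stated ideal exponents. For a uniform argument one invokes the result (due to Amend, Hoge, and R\"ohrle) that every restriction of a real reflection arrangement is inductively free. A more pedestrian alternative is to identify $\CA(\Phi)^{H_\theta}$ in each type with an explicit reflection arrangement of a smaller reflection group (for instance $\CA(A_n)^{H_\theta}\cong\CA(A_{n-1})$) and invoke Barakat--Cuntz directly; for the classical types one may instead appeal to Theorem~\ref{thm:classical} to bypass the restriction entirely. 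In every route, the computational heart concentrates in the exceptional types, and most notably in $E_8$.
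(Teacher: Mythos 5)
The computation of the ideal exponents is correct, and you correctly identify the freeness of the restriction $\CA(\Phi)^{H_\theta}$ via Terao's addition--deletion theorem. But the final inference is not valid: there is no ``inductive-freeness version of addition--deletion'' that runs in the deletion direction. Theorem~\ref{thm:add-del} is symmetric in $\CA, \CA', \CA''$ for \emph{freeness}, but for the class $\CIF$ the only available statement is clause~(ii) of Definition~\ref{def:indfree}, which passes \emph{from} inductive freeness of $\CA'$ and $\CA''$ (with compatible exponents) \emph{to} inductive freeness of $\CA$. It never passes from $\CA$ and $\CA''$ to $\CA'$. Knowing that $\CA$ is inductively free only means that \emph{some} hyperplane $H_0$ yields a triple of inductively free arrangements; it does not follow that $H_\theta$ does. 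This is exactly the distinction that separates inductive freeness from the strictly weaker notion of recursive freeness, in which deletions are also allowed. So the crucial closing step of your argument has no support.

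The paper's own proof circumvents this in a different way and, consistent with Remark~(i) after Conjecture~\ref{conj:indfree}, it does not invoke Theorem~\ref{thm:ideals} at all. Since $\CA(\Phi)$ is inductively free, there is an induction table for $\CA(\Phi)$, and its penultimate arrangement $\CA(\Phi)\setminus\{H_N\}$ is inductively free \emph{by construction}. In simply-laced type $W$ is transitive on roots, so one can transport the entire table by an element $w\in W$ taking $H_N$ to $H_\theta$; the translated table ends at $H_\theta$ and its penultimate term is exactly $\CA_\CI$, which is therefore inductively free directly (no deletion theorem is needed). For the two-root-length cases the paper invokes the isomorphism between the restrictions of $\CA$ at short and long root hyperplanes to produce an admissible triple at $H_\theta$. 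The essential difference from your plan is that the paper exhibits $\CA_\CI$ as a step inside an induction table for $\CA(\Phi)$, rather than trying to descend from $\CA(\Phi)$ by a deletion rule that does not exist.

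Two smaller points. Your pedestrian alternative of identifying $\CA(\Phi)^{H_\theta}$ ``with an explicit reflection arrangement of a smaller reflection group'' fails in precisely the hard cases: for instance $\CA(E_8)^{H_\theta}$ has $91$ hyperplanes in rank~$7$, which is not the Weyl arrangement of any rank-$7$ reflection group. And your reliance on Theorem~\ref{thm:ideals} is not a logical error, but it runs counter to the authors' stated aim that their proofs of Theorems~\ref{thm:AnBn}--\ref{thm:indfree} be independent of that result.
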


Next we describe an inductive tool that 
is a generalization of a criterion from  
\cite[\S 7]{sommerstymoczko}.
It is crucial in the proofs of both 
Theorems \ref{thm:AnBn} and \ref{thm:Dn}, 
as well as in all subsequent results. 
Using induction, it allows us to show that a large class of 
arrangements of ideal type is inductively free
also in the exceptional types, see Theorems \ref{thm:condition}
and  \ref{thm:indfree-exceptional}.

Let $\Phi_0$ be a (standard) parabolic subsystem of $\Phi$ 
(cf.~\S \ref{ssect:parabolic}) and let 
\[
\Phi^c_0 := \Phi^+ \setminus \Phi_0^+,
\]
the set of  positive roots in the ambient root system which do 
not lie in the smaller one.
The following is a generalization of an inductive criterion from 
\cite[\S 7]{sommerstymoczko}.

\begin{condition} 
\label{cond:linear}
Let $\CI$ be an ideal in $\Phi^+$ and let
$\Phi_0$ be a maximal parabolic subsystem of $\Phi$
such that $\Phi^c_0 \cap \CI^c \ne \varnothing$.
Assume that 
firstly, $\Phi^c_0 \cap \CI^c$ is linearly ordered with respect to 
$\preceq$ so that there is 
a unique root of every occurring 
height in $\Phi^c_0 \cap \CI^c$, 
and secondly, 
for any $\alpha \ne \beta$ 
in $\Phi^c_0 \cap \CI^c$, 
there is a $\gamma \in \Phi_0^+$ so that 
$\alpha, \beta$, and $\gamma$
are linearly dependent.
\end{condition}

\begin{defn}
\label{def:I1}
Fix a standard parabolic subsystem $\Phi_0$ of $\Phi$.
For $\CI$ an ideal in $\Phi^+$, 
\[
\CI_0 := \CI \cap \Phi_0^+
\]
is an ideal in $\Phi_0^+$. Thus 
\[
\CA_{\CI_0} := \{H_\gamma \mid \gamma \in \CI_0^c = \Phi_0^+ \setminus \CI_0\}
\] 
is an arrangement of ideal type in 
$\CA(\Phi_0)$, the Weyl arrangement of $\Phi_0$. 
\end{defn}

Obviously, since 
$\CI_0^c  = \Phi_0^+ \setminus \CI_0 = \CI^c \cap \Phi_0^+ \subseteq \CI^c$, 
we may view 
$\CA_{\CI_0}$ as a
subarrangement 
of $\CA_\CI$
rather than as a subarrangement of $\CA(\Phi_0)$.
Note however, as such, 
$\CA_{\CI_0}$ is not of ideal type in $\CA$ in general,  
since $\CI_0$ need not be an ideal in $\Phi^+$.

Our next result
shows that Condition \ref{cond:linear}
allows us to derive
various stronger freeness properties of $\CA_\CI$
from those of $\CA_{\CI_0}$.
This is the principal inductive tool in 
our entire study.

For the notion of an \emph{inductively factored}
arrangement, see Definition \ref{def:indfactored}.
This also implies inductive freeness, see
Proposition \ref{prop:indfactoredindfree}.

\begin{theorem}
\label{thm:I1I}
Let $\CI$ be an ideal in $\Phi^+$ and let
$\Phi_0$ be a maximal parabolic subsystem of $\Phi$
such that either $\Phi^c_0 \cap \CI^c = \varnothing$
or else Condition \ref{cond:linear} is satisfied.
Then the following hold:
\begin{itemize}
\item[(i)] 
$\CA_{\CI_0}$ is supersolvable
if and only if $\CA_\CI$ is supersolvable;
\item[(ii)] 
$\CA_{\CI_0}$ is inductively free
if and only if $\CA_\CI$ is inductively free;
\item[(iii)] 
$\CA_{\CI_0}$ is inductively factored
if and only if $\CA_\CI$ is inductively factored.
\end{itemize}
In particular, in each of the cases above we have
\[
\exp \CA_\CI = \{\exp \CA_{\CI_0}, |\CA_\CI\setminus \CA_{\CI_0}|\} = 
\{0^{n-k}, m_1^\CI, \ldots, m_s^\CI\}.
\]
\end{theorem}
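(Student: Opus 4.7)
The approach is iterative single-hyperplane addition. If $\Phi_0^c \cap \CI^c = \varnothing$ then $\CI^c \subseteq \Phi_0^+$, so $\CA_\CI = \CA_{\CI_0}$ and all three equivalences and the exponent formula are immediate. Otherwise, using the first clause of Condition~\ref{cond:linear} I enumerate $\Phi_0^c \cap \CI^c = \{\beta_1, \ldots, \beta_r\}$ strictly by height, $\beta_1 \prec \beta_2 \prec \cdots \prec \beta_r$, and form the filtration
\[
\CA_{\CI_0} = \CA_0 \subsetneq \CA_1 \subsetneq \cdots \subsetneq \CA_r = \CA_\CI, \qquad \CA_j := \CA_{j-1} \cup \{H_{\beta_j}\}.
\]

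The crux is the restriction identity $\CA_{j-1}^{H_{\beta_j}} = \CA_{\CI_0}^{H_{\beta_j}}$ for every $j$. Any putatively new hyperplane of the left-hand restriction comes from some $\beta_i$ with $i < j$, and the second clause of Condition~\ref{cond:linear} supplies $\gamma \in \Phi_0^+$ with $\beta_i, \beta_j, \gamma$ linearly dependent, so $H_{\beta_i} \cap H_{\beta_j} = H_\gamma \cap H_{\beta_j}$. A root-theoretic analysis within the rank-two subsystem containing $\beta_i, \beta_j, \gamma$, combined with the upper-ideal property of $\CI$ (contrapositive: a root below an element of $\CI^c$ remains in $\CI^c$), forces $\gamma \in \CI_0^c$, whence $H_\gamma$ already appears in $\CA_{\CI_0}$ and the two restrictions agree. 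With this identity in place, each step $\CA_{j-1} \subset \CA_j$ is a genuine single-hyperplane addition whose restriction is the \emph{fixed} arrangement $\CA_{\CI_0}^{H_{\beta_j}}$, which in particular does not evolve as the filtration progresses.

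From here I would iterate Terao's addition theorem (Theorem~\ref{thm:add-del}) together with its inductive free, inductively factored, and supersolvable refinements, walking up the filtration in the addition direction, or back down it in the deletion direction, to transfer each of the three freeness properties between $\CA_{\CI_0}$ and $\CA_\CI$. The exponent statement then follows by bookkeeping: every addition of $H_{\beta_j}$ increments exactly one exponent by $1$, so that after all $r = |\CA_\CI \setminus \CA_{\CI_0}|$ additions the resulting multiset is $\exp \CA_{\CI_0}$ augmented by the heights coming from $\Phi_0^c \cap \CI^c$, which by Theorem~\ref{thm:ideals} and Definition~\ref{def:idealexp} assembles into $\{0^{n-k}, m_1^\CI, \ldots, m_s^\CI\}$.

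The principal obstacle is the restriction identity, and specifically the claim that the $\gamma$ supplied by Condition~\ref{cond:linear} can be chosen in $\CI_0^c$ rather than merely in $\Phi_0^+$. This is the step that welds the geometric linear-dependence hypothesis to the combinatorics of the ideal, and it is what allows a single combinatorial input to simultaneously control supersolvability, inductive freeness, and inductive factorization along the filtration.
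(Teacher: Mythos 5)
Your restriction identity $\CA_{j-1}^{H_{\beta_j}}=\CA_{\CI_0}^{H_{\beta_j}}$ is precisely the right invariant, and it is, in unpacked form, the paper's central observation: Condition~\ref{cond:linear} forces the center $Z=T((\CA_\CI)_{X_0})$ to be a modular element of corank one in $L(\CA_\CI)$ (Lemma~\ref{lem:condition-modular}, whose proof runs exactly your rank-two root analysis through the modularity criterion of Lemma~\ref{lem:modular1}). Once modularity is in hand, the paper invokes three general lemmas about modular coatoms (Corollary~\ref{cor:modular-super}, Lemma~\ref{lem:modular-indfree}, Lemma~\ref{lem:modular-indfactored}), and for the inductive-freeness case Lemma~\ref{lem:modular-indfree} is proved by the very induction table you describe; so for part (ii) you are essentially re-deriving that lemma in place. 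You do, however, still need the isomorphism $\CA_{\CI_0}^{H_{\beta_j}}\cong\CA_{\CI_0}$ (Lemma~\ref{lem:modular2}, itself a consequence of modularity) to pin down $\exp\CA_j''$ at each step; your sketch omits this and it is what makes the inclusion $\exp\CA_j''\subseteq\exp\CA_{j-1}$ go through.

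The genuine gaps are in parts (i) and (iii). There is no ``supersolvable refinement of Terao's addition theorem'': supersolvability does not admit a one-hyperplane addition-deletion mechanism, so walking up and down the filtration is the wrong picture there. The correct tool is the Bj\"orner--Edelman--Ziegler criterion (Theorem~\ref{thm:super}), which is a \emph{one-shot} application of your restriction identity: once every pair $H_{\beta_i},H_{\beta_j}$ is known to intersect inside some $H_\gamma\in\CA_{\CI_0}$, the decomposition $\CA_\CI=\CA_{\CI_0}\sqcup(\CA_\CI\setminus\CA_{\CI_0})$ directly verifies its hypotheses --- this is what Corollary~\ref{cor:modular-super} packages, no iteration needed. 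For (iii), the addition-deletion theorem for factorizations (Theorem~\ref{thm:add-del-factored}) carries extra hypotheses --- you must exhibit a candidate partition, verify that the restriction map $\varrho$ is a bijection onto $\CA_j''$, and check that the induced partition is nice --- and your sketch addresses none of these; compare Lemma~\ref{lem:modular-indfactored} and Table~\ref{indtable:modular2}. Two smaller points: the reverse implications are cleaner and uniformly correct via localization-closedness (Lemma~\ref{lem:ideallocal} plus Corollary~\ref{cor:ssideal}), since a ``deletion direction'' for supersolvability does not exist; and invoking Theorem~\ref{thm:ideals} for the exponent bookkeeping cuts against the paper's stated aim of keeping the argument independent of that theorem --- the exponent formula falls out of the modular-coatom lemmas themselves.
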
 

The key observation for the proof of Theorem \ref{thm:I1I} 
stems from the fact that 
Condition \ref{cond:linear} entails the presence 
of a modular element in $L(\CA_\CI)$ of rank 
$r(\CA_\CI)-1$ so that 
$\CA_{\CI_0}$ (viewed as a subarrangement of 
$\CA_\CI$) is realized as the localization of 
$\CA_\CI$ with respect to this modular element, 
see Lemmas \ref{lem:ideallocal} and \ref{lem:condition-modular}. 

We emphasize that our proofs of 
Theorems \ref{thm:AnBn} and \ref{thm:Dn} 
crucially depend on Theorem \ref{thm:I1I},  and so do 
our next two results which are central 
concerning inductive freeness of the $\CA_\CI$.

\begin{theorem}
\label{thm:condition}
Let $\Phi$ be a reduced root system 
with Weyl arrangement $\CA = \CA(\Phi)$.
Let $\CI$ be an ideal in $\Phi^+$. 
Suppose that either $\CA_\CI$ is reducible, or else
$\CA_\CI$ is irreducible and there is a 
 maximal parabolic subsystem of $\Phi$ 
such that Condition \ref{cond:linear} is satisfied.
Suppose that each arrangement of ideal type 
for every proper parabolic subsystem is inductively free.
Then the arrangement of ideal type
$\CA_\CI$ is inductively free
with the non-zero
exponents given by the ideal exponents 
$m_i^\CI$ of $\CI$. 
\end{theorem}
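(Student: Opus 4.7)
The plan is to split according to the hypothesis dichotomy and appeal to Theorem \ref{thm:I1I}(ii) in each case, using the assumed inductive freeness of arrangements of ideal type on proper parabolic subsystems as the inductive hypothesis.

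First I would handle the irreducible case. Assume $\CA_\CI$ is irreducible and let $\Phi_0$ be a maximal parabolic subsystem of $\Phi$ such that Condition \ref{cond:linear} is satisfied; necessarily $\Phi^c_0 \cap \CI^c \ne \varnothing$ here. Since a maximal parabolic is in particular a proper parabolic subsystem, the subarrangement $\CA_{\CI_0}$ is an arrangement of ideal type inside $\CA(\Phi_0)$, and by the hypothesis of the theorem it is inductively free. Theorem \ref{thm:I1I}(ii) then immediately yields that $\CA_\CI$ is inductively free. The final displayed identity in Theorem \ref{thm:I1I}, combined with Theorem \ref{thm:ideals}, identifies the non-zero exponents of $\CA_\CI$ as the ideal exponents $m_i^\CI$.

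Next I would turn to the reducible case. Here I would decompose $\CA_\CI$ as a direct product of irreducible subarrangements $\CA_\CI = \CA_1 \times \cdots \times \CA_r$. The main combinatorial task is to identify each factor $\CA_j$ as an arrangement of ideal type $\CA_{\CI^{(j)}}$ for some ideal $\CI^{(j)}$ in the positive system of a proper irreducible parabolic subsystem $\Phi^{(j)}$ of $\Phi$. The reducibility of $\CA_\CI$ partitions $\CI^c$ into pieces whose supports span mutually orthogonal subspaces, so each piece lies in a proper irreducible parabolic subsystem $\Phi^{(j)}$; one then checks directly from the definitions that each such piece is again an order ideal inside $\Phi^{(j),+}$. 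With this identification in hand, each factor is inductively free by the hypothesis of the theorem, and since inductive freeness is preserved under taking direct products, so is $\CA_\CI$. The exponent list for $\CA_\CI$ is the multiset union of those of the factors; together with Theorem \ref{thm:ideals} this recovers the ideal exponents $m_i^\CI$.

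The hard part will be the reducible case: explicitly identifying the irreducible components of $\CA_\CI$ with arrangements of ideal type in proper parabolic subsystems, and in particular confirming that the relevant subsets of positive roots are themselves ideals in the smaller subsystems. The irreducible case is essentially a direct application of Theorem \ref{thm:I1I}(ii) coupled with the induction hypothesis.
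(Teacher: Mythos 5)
Your proposal follows the same two-pronged route as the paper's proof: in the irreducible case, apply Theorem \ref{thm:I1I}(ii) to the maximal parabolic $\Phi_0$ furnished by Condition \ref{cond:linear} together with the inductive hypothesis; in the reducible case, decompose $\CA_\CI$ as a product of arrangements of ideal type in smaller rank parabolic subsystems and invoke the inductive hypothesis plus Proposition \ref{prop:product-indfree}. The paper's proof is terser than yours (it simply asserts that a reducible $\CA_\CI$ is a product of smaller rank arrangements of ideal type, leaving the verification to the reader), whereas you correctly identify this as the step requiring some care, namely that the pieces of $\CI^c$ land in positive systems of proper parabolic subsystems and that the corresponding intersections with $\CI$ are again order ideals there. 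Two minor points: your description of the pieces as having ``mutually orthogonal'' supports is imprecise — what is needed is a direct sum decomposition of $V$, not orthogonality of the root spans — although the intended argument is sound; and you appeal to Theorem \ref{thm:ideals} to identify the exponents, which is not needed and runs counter to the author's remark that the proofs here are independent of Theorem \ref{thm:ideals} — the exponent identification already follows from the final displayed formula in Theorem \ref{thm:I1I} together with the behaviour of the dual of the height partition under the product decomposition.
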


Theorem \ref{thm:classical} settles the question 
of inductive freeness of all $\CA_\CI$ for 
classical root systems. Our next results
addresses the situation for the exceptional types.

\begin{theorem}
\label{thm:indfree-exceptional}
Let $\Phi$ be a root system of exceptional type 
with Weyl arrangement $\CA = \CA(\Phi)$.
Let $\CI$ be an ideal in $\Phi^+$. 
Suppose that $\CI$ satisfies one of the following conditions
\begin{itemize}
\item[(i)]
$\CA_\CI$ is reducible;
\item[(ii)]
$\CA_\CI$ is irreducible and there is a 
 maximal parabolic subsystem of $\Phi$ 
such that Condition \ref{cond:linear} is satisfied; or
\item[(iii)]
$\CI = \{\theta\}$ or $\CI = \varnothing$.
\end{itemize}
Suppose that each arrangement of ideal type 
for every proper parabolic subsystem of exceptional type is inductively free.
Then $\CA_\CI$ is inductively free.

Table \ref{table:cond} gives the number of all $\CA_\CI$ 
which satisfy one of the conditions above.
\end{theorem}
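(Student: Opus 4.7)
The plan is to treat the three listed conditions separately; in each case I reduce inductive freeness of $\CA_\CI$ either to inductive freeness of an arrangement of ideal type in a \emph{proper} parabolic subsystem---where the standing inductive hypothesis or Theorem \ref{thm:classical} applies---or to one of two known anchor results. The final count in Table \ref{table:cond} is a separate, essentially computational task.

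For (i), suppose $\CA_\CI$ is reducible. Since $\CI$ is an ideal, $\CI^c$ is a lower set in $(\Phi^+,\preceq)$; in particular the set $\Pi'\subseteq\Pi$ of simple roots supporting some root of $\CI^c$ is itself contained in $\CI^c$. Because the support of any positive root is a connected subdiagram of the Dynkin diagram of $\Phi$, each root in $\CI^c$ is supported in exactly one connected component of the induced subdiagram on $\Pi'$. The irreducible decomposition $\CA_\CI = \CA_1\times\cdots\times\CA_k$ therefore matches the connected components of that subdiagram, and each factor $\CA_j$ is an arrangement of ideal type in the parabolic subsystem $\Phi^{(j)}\subsetneq\Phi$ generated by its component; properness follows because $\Phi$ is irreducible and $\CA_\CI$ is reducible. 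By Theorem \ref{thm:classical} (if $\Phi^{(j)}$ is of classical type) or by the standing hypothesis (if $\Phi^{(j)}$ is of exceptional type), each $\CA_j$ is inductively free, and since inductive freeness is preserved under products, so is $\CA_\CI$.

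For (ii), assuming Condition \ref{cond:linear} holds for some maximal parabolic subsystem $\Phi_0\subsetneq\Phi$, Definition \ref{def:I1} provides an arrangement of ideal type $\CA_{\CI_0}$ in $\Phi_0$. Since $\Phi_0$ is a proper parabolic subsystem, $\CA_{\CI_0}$ is inductively free---by Theorem \ref{thm:classical} in the classical case and by the standing hypothesis in the exceptional case. Theorem \ref{thm:I1I}(ii) then lifts inductive freeness from $\CA_{\CI_0}$ to $\CA_\CI$. For (iii), if $\CI=\varnothing$ then $\CA_\CI=\CA(\Phi)$ is the full reflection arrangement, which is inductively free by Barakat--Cuntz \cite[Cor.~5.15]{barakatcuntz:indfree}; if $\CI=\{\theta\}$, inductive freeness is precisely Theorem \ref{thm:penultimate}.

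Finally, the count in Table \ref{table:cond} I would obtain by a direct computer enumeration in \GAP{} or \Sage{}: for each exceptional type $G_2,F_4,E_6,E_7,E_8$, iterate over all ideals $\CI\subseteq\Phi^+$ and test each of (i)--(iii). I expect this enumeration---in particular the check of Condition \ref{cond:linear} across every maximal parabolic in $E_8$, where the number of ideals is large---to be the main practical bottleneck; the conceptual content of the theorem is a clean orchestration of Theorems \ref{thm:I1I}, \ref{thm:classical}, \ref{thm:penultimate}, together with Barakat--Cuntz.
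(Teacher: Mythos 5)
Your overall organization---peeling off (iii) via Barakat--Cuntz and Theorem \ref{thm:penultimate}, handling (ii) via Theorem \ref{thm:I1I}(ii) together with the standing hypothesis (or Theorem \ref{thm:classical} when the maximal parabolic happens to be classical), and leaving the count to a separate enumeration---mirrors the structure of the paper's Section \ref{S:indfree}. Parts (ii) and (iii) are correct.

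The argument for (i) has a genuine gap. You assert that the irreducible decomposition $\CA_\CI=\CA_1\times\cdots\times\CA_k$ matches the connected components of the induced subdiagram on $\Pi'=\CI^c\cap\Pi$, and you deduce properness of the factors $\Phi^{(j)}$ from the reducibility of $\CA_\CI$. This is false: $\CA_\CI$ can be reducible while $\Pi'=\Pi$ is the whole (connected) Dynkin diagram. Indeed, take any strictly positive ideal $\CI$ containing a root of height $2$, say $\CI=\langle\alpha+\beta\rangle$ with $\alpha,\beta$ adjacent simple roots and $\CI\cap\Pi=\varnothing$. Then every root of $\CI^c$ avoids having both $\alpha$ and $\beta$ in its support, so $\CA_\CI$ factors as the product over the two pieces of the Dynkin diagram obtained by \emph{deleting the edge} $\alpha$--$\beta$; yet $\Pi'=\Pi$ has a single connected component, so your decomposition has $k=1$ and the single $\Phi^{(1)}=\Phi$ is not proper. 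For a concrete instance relevant to the theorem, take $\Phi$ of type $E_6$ and $\CI=\langle\alpha_1+\alpha_3\rangle$ in Bourbaki's labeling: then $\CA_\CI\cong\varnothing_1\times\{H_{\alpha_1}\}\times\CA(D_5)$ is reducible, while $\Pi'=\Pi$ is connected. The reducible case therefore must distinguish two sub-cases, as the paper does in \S\ref{ss:notI2} and \S\ref{ss:I2notI3}: (a) $\CI$ contains a simple root, in which case the decomposition is along deleted vertices of the diagram and each factor is an ideal-type arrangement in the parabolic on the remaining vertices; and (b) $\CI$ is strictly positive but contains a height-$2$ root $\alpha+\beta$, in which case the decomposition is along the deleted edge $\alpha$--$\beta$ and the factors are ideal-type arrangements in the parabolics on the two resulting pieces. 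Once both sub-cases are treated (and one observes that $\CI\subseteq\CI_3$ together with $\Phi$ irreducible forces $\CA_\CI$ to be irreducible, so they are exhaustive), the reduction to Proposition \ref{prop:product-indfree} plus the standing hypothesis or Theorem \ref{thm:classical} goes through exactly as you intend.
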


Specifically, in Table \ref{table:cond} we present the number of all 
arrangements of ideal type for each exceptional type in the first row,
see \eqref{eq:noI}.
In the second row, we list the number of 
all $\CA_\CI$ which 
satisfy one of the conditions in Theorem \ref{thm:indfree-exceptional}
and thus are inductively free under the assumption 
that this is the case for 
every proper parabolic subsystem of exceptional type.

\begin{table}[ht!b]
\renewcommand{\arraystretch}{1.6}
\begin{tabular}{r|ccccc}
\hline
$\Phi$ & $E_6$ & $E_7$ & $E_8$ & $F_4$ & $G_2$ \\ 
\hline\hline
all $\CA_\CI$ & 833 & 4160 & 25080 & 105 & 8\\
ind. free $\CA_\CI$ & 771 & 3433 & 18902 & 85 & 8\\
\hline
\end{tabular}
\smallskip
\caption{Ind.\ free $\CA_\CI$ for exceptional $\Phi$ from Theorem \ref{thm:indfree-exceptional}}
\label{table:cond}
\end{table}

Thanks to Theorems \ref{thm:classical}
and \ref{thm:indfree-exceptional}, 
it is evident from Table \ref{table:cond} that 
with the possible exception of a relatively small number of
cases in the exceptional types, all $\CA_\CI$ 
are inductively free.
T.~Hoge was able to check 
on a computer that all of the possible exceptions 
in type $F_4$ (20 cases), $E_6$ (62 cases) 
and $E_7$ (727 cases)
are indeed inductively free. 
There are 6178 undecided cases
in $E_8$ at present. 
Therefore, using Theorems \ref{thm:classical}
and \ref{thm:indfree-exceptional} along 
with these computational results, we can derive the following.

\begin{theorem}
\label{thm:indfree}
Let $\Phi$ be a reduced root system with 
Weyl arrangement $\CA = \CA(\Phi)$.
Let $\CI$ be an ideal in $\Phi^+$. 
Then $\CA_\CI$ is inductively free 
with the non-zero
exponents given by the ideal exponents 
$m_i^\CI$ of $\CI$ with the possible 
exception when $W$ is of type $E_8$ 
and $\CI$ is one of $6178$ ideals in $\Phi^+$.
\end{theorem}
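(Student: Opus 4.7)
The plan is to proceed by induction on the rank of $\Phi$ and to harvest the structural results already assembled in the paper. For classical $\Phi$ the claim is immediate from Theorem \ref{thm:classical}, so I restrict attention to the irreducible exceptional types $G_2, F_4, E_6, E_7, E_8$ and argue by induction on rank.

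For the base case I would handle $G_2$. Its only proper parabolic subsystems are of type $A_1$, which is classical, so the hypothesis of Theorem \ref{thm:indfree-exceptional} concerning proper parabolic subsystems of exceptional type is satisfied vacuously. The $G_2$-column of Table \ref{table:cond} records $8/8$: every one of the eight ideals of $G_2$ satisfies one of the conditions (i), (ii), or (iii) of Theorem \ref{thm:indfree-exceptional}, so inductive freeness for all $\CA_\CI$ with $\Phi$ of type $G_2$ follows directly.

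For the inductive step, take $\Phi$ exceptional of rank at least $4$ and assume the statement for every irreducible root system of strictly smaller rank. Any proper parabolic subsystem $\Phi_0$ of $\Phi$ decomposes as a direct sum of irreducible root subsystems of strictly smaller rank, each either classical (Theorem \ref{thm:classical}) or exceptional (inductive hypothesis). Since inductive freeness is preserved under taking products of arrangements, every arrangement of ideal type for every proper parabolic subsystem of $\Phi$ is inductively free. The hypothesis of Theorem \ref{thm:indfree-exceptional} is therefore met, and that theorem produces inductive freeness of $\CA_\CI$ for every ideal $\CI \subseteq \Phi^+$ satisfying one of the conditions (i), (ii), or (iii). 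Reading off the second row of Table \ref{table:cond} this leaves exactly $20$, $62$, $727$, and $6178$ ideals in types $F_4$, $E_6$, $E_7$, and $E_8$, respectively, that are not reached by this inductive machinery.

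The main obstacle is precisely these residual ideals, for which no structural argument in the paper applies: Condition \ref{cond:linear} fails for every maximal parabolic subsystem, $\CA_\CI$ is irreducible, and $\CI$ is neither $\varnothing$ nor $\{\theta\}$. Here I would invoke the computer-algebraic verifications of T.~Hoge, which check inductive freeness case by case for all $20 + 62 + 727$ exceptions in $F_4$, $E_6$, and $E_7$. No analogous verification is currently feasible in $E_8$, so the $6178$ remaining ideals stand as the asserted possible exceptions. In every case where inductive freeness is established, inductive freeness forces freeness, and the non-zero exponents then coincide with the ideal exponents $m_i^\CI$ by Theorem \ref{thm:ideals}, completing the argument.
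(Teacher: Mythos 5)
Your proposal follows essentially the same route as the paper: reduce to Theorem \ref{thm:classical} for classical types, apply Theorem \ref{thm:indfree-exceptional} by induction on the rank for the exceptional types (noting that proper parabolic subsystems factor into pieces of smaller rank, classical or exceptional, all of which are inductively free by Theorem \ref{thm:classical}, the inductive hypothesis, and Proposition \ref{prop:product-indfree}), and then invoke Hoge's computational verification of the residual $20+62+727$ cases in $F_4$, $E_6$, $E_7$ (Remark \ref{rem:computed}), leaving the $6178$ cases in $E_8$ as the stated exceptions. The one place you depart from the paper's chain of reasoning is the final sentence, where you appeal to Theorem \ref{thm:ideals} to identify the non-zero exponents with the ideal exponents; this is a legitimate shortcut, but the paper deliberately avoids it (see the remark following Conjecture \ref{conj:indfree}) — instead the exponent identification is carried along inductively through the machinery of Theorem \ref{thm:I1I} and Theorem \ref{thm:condition}, which keep the proof self-contained and yield a logically independent derivation of the Shapiro--Steinberg--Kostant formula for $\CI = \varnothing$.
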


This leads to the following conjecture.

\begin{conjecture}
\label{conj:indfree}
Let $\Phi$ be a reduced root system with 
Weyl arrangement $\CA = \CA(\Phi)$.
Then any subarrangement of $\CA$ of
ideal type $\CA_\CI$ is inductively free
with the non-zero
exponents given by the ideal exponents 
$m_i^\CI$ of $\CI$. 
\end{conjecture}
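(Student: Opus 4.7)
The plan is to leverage the machinery already developed: by Theorem \ref{thm:indfree} the conjecture is reduced to verifying inductive freeness for an explicit list of $6178$ ideals in type $E_8$, so the proof proposal is really a strategy for resolving these remaining cases. I would attack them along three complementary lines.

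First, I would try to extend the direct computational verification that T.~Hoge successfully carried out for the $20$ cases in $F_4$, the $62$ cases in $E_6$ and the $727$ cases in $E_7$. By Theorem \ref{thm:ideals} each $\CA_\CI$ is already free with prescribed exponents $m_i^\CI$, and the set $\CI^c$ is known, so one may search for an inductively free filtration $\varnothing = \CA_0 \subset \CA_1 \subset \ldots \subset \CA_r = \CA_\CI$ with each restriction $\CA_i^H$ inductively free. The practical obstruction is combinatorial explosion: $|\CA(E_8)| = 120$ and the rank is $8$, so a blind enumeration of hyperplane orderings is hopeless. One would need to cut the search tree drastically, for instance by always deleting a hyperplane $H_\alpha$ for which $(\CA_\CI)^{H_\alpha}$ sits inside a previously treated case, or by pruning using parabolic inclusions of the form $\CA_{\CI_0} \subseteq \CA_\CI$ coming from Definition \ref{def:I1}.

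Second, I would try to enlarge the toolbox of inductive criteria beyond Condition \ref{cond:linear}. The proof of Theorem \ref{thm:I1I} uses Condition \ref{cond:linear} only to produce, via Lemmas \ref{lem:ideallocal} and \ref{lem:condition-modular}, a modular coatom of $L(\CA_\CI)$ whose localization is $\CA_{\CI_0}$. It should be possible to formulate weaker incidence conditions on $\Phi^c_0 \cap \CI^c$ that still guarantee such a modular coatom, and to allow $\Phi_0$ to be non-maximal, yielding modular elements of higher corank. This sacrifices any hope of supersolvability but should still preserve inductive freeness, and possibly even inductive factoredness in the sense of Definition \ref{def:indfactored}. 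Applying these refined criteria to the $6178$ undecided ideals and checking which of them fall under the extended reach is a finite problem, and experience with $F_4, E_6, E_7$ suggests it will eat into the list substantially.

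Third, for ideals $\CI$ resisting both of the above, I would exploit the filtrations produced by the multi-hyperplane addition theorem of Abe, Barakat, Cuntz, Hoge and Terao underlying the proof of Theorem \ref{thm:ideals}. Their argument already yields a total order on $\CA_\CI$ such that every initial segment is free; the task is to strengthen it so that each restriction $\CA_i^{H_{i+1}}$ is also inductively free. This is where I expect the main obstacle to lie: passing from freeness of the chain to inductive freeness of the restrictions is exactly the step that the existing $E_8$ computations could not yet certify, and without a conceptual mechanism identifying each $\CA_i^{H_{i+1}}$ with an arrangement of ideal type (or inductive cone thereof) in a smaller root system, one is forced back onto case-by-case verification in the $E_8$ lattice whose sheer size is what makes this last step of the conjecture genuinely hard.
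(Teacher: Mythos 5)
You have correctly recognized that the statement you were asked to prove is presented in the paper as an open \emph{conjecture}, not as a theorem, so there is no proof in the paper against which to compare your argument. The paper's own contribution is Theorem \ref{thm:indfree}, which establishes the conjecture in all cases except for $6178$ ideals in type $E_8$; your opening sentence accurately records this reduction. Your three-pronged strategy---(a) extending the brute-force computation that T.~Hoge carried out for $F_4$, $E_6$, $E_7$; (b) enlarging the family of inductive criteria beyond Condition \ref{cond:linear}, allowing non-maximal parabolics and weaker incidence hypotheses while still producing a modular coatom via analogues of Lemmas \ref{lem:ideallocal} and \ref{lem:condition-modular}; and (c) trying to upgrade the free filtrations from \cite{abeetall:weyl} to inductively free ones---is a reasonable and well-informed research program that aligns with the paper's remarks (in particular Remark 1.19(iv)) about what would be needed to close the gap.

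The genuine gap, as you yourself acknowledge, is that none of these three lines is actually carried out: the proposal is a plan of attack, not a proof. Concretely, (a) is purely computational and faces exactly the combinatorial explosion you describe; (b) is only a hope, since you do not formulate a precise weaker condition or verify that it catches any of the $6178$ outstanding ideals; and (c) is the hardest step, since---as you correctly diagnose---freeness of each initial segment of the Abe--Barakat--Cuntz--Hoge--Terao filtration does not by itself force the restrictions to be inductively free, and no mechanism identifying those restrictions with smaller arrangements of ideal type is available. So the proposal does not establish Conjecture \ref{conj:indfree}; it merely restates, with useful elaboration, the reduction already present in the paper and the open problem that remains. This is not a defect of your reasoning so much as a reflection of the fact that the source paper itself leaves the conjecture open.
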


\begin{remarks}

(i).
We emphasize that our proofs of Theorems \ref{thm:AnBn} -- \ref{thm:indfree}
do not depend on Theorem \ref{thm:ideals}.
Instead our arguments are based on extensions of ideas from 
the paper of Sommers and Tymoczko \cite{sommerstymoczko}.  
Condition \ref{cond:linear} is clearly inspired by their work.
Nevertheless, our approach is developed more within the
framework of hyperplane arrangements and 
depends less on the root system combinatorics.

(ii).
The freeness property asserted in 
Conjecture \ref{conj:indfree} is 
the strongest one we can hope 
to hold for \emph{all} $\CA_\CI$ for \emph{all} types,
as the Weyl arrangement $\CA(\Phi)$
itself is supersolvable (inductively factored) 
only if $\Phi$ is of type $A_n$, $B_n$, $C_n$, or $G_2$,  
cf.~Theorem \ref{thm:ssW}.

(iii).
Since inductively free arrangements are 
\emph{divisionally free}, see \cite{abe:divfree}, 
it follows 
from Theorem \ref{thm:indfree} 
that with the possible exception of $6178$
instances in type $E_8$ 
all $\CA_\CI$ are also divisionally free.
Note, 
Conjecture \ref{conj:indfree} would 
settle affirmatively a conjecture by Abe
that all arrangements of ideal type are 
divisionally free, 
\cite[Conj.\ 6.6]{abe:divfree}.

(iv).
The fact that T.~Hoge was able to 
confirm by computer calculations
that the remaining instances 
from Table \ref{table:cond}
in types $F_4$, $E_6$ and $E_7$ are indeed 
inductively free, suggests
that the outstanding $6178$ instances
for $E_8$ might also be in reach 
by suitable computational means.
Although the number of undecided instances 
for $E_8$ is small in relative terms 
(only $6178$ out of $25080$ cases), 
in view of the challenges the authors of 
\cite{barakatcuntz:indfree} were faced with 
in connection with their computational 
proof of the inductive freeness of 
the full Weyl arrangement for $E_8$, 
this is likely going to be a formidable task.

Even if this finite number of unresolved
instances can be confirmed computationally,
it would be very desirable to have
a uniform and conceptional 
proof of Conjecture \ref{conj:indfree}.
This would then entail a conceptual 
proof of the fact that the
Weyl arrangement for $E_8$ itself is 
inductively free. 
\end{remarks}

\subsection{The Poincar\'e polynomial of a Coxeter group}
\label{ssec:coxeter}
Let $W$ be the Coxeter group associated with the root system $\Phi$
and let $\Phi^+$ be the system of positive roots with respect to 
a base $\Pi$. For $w \in W$, we define
\begin{equation}
\label{eq:Nw}
N(w) := \{ \alpha \in \Phi^+ \mid w \alpha \in -\Phi^+\}.
\end{equation}
Then $|N(w)| = \ell(w)$, where $\ell$ is the usual length function 
of $W$ with respect to the fixed set of generators of $W$ corresponding 
to $\Pi$.

Let $t$ be an indeterminate. The \emph{Poincar\'e polynomial} $W(t)$ of 
the Coxeter group $W$ is defined by 
\begin{equation}
\label{eq:poncarecoxeter}
W(t) := \sum_{w \in W} t^{|N(w)|} = \sum_{w \in W} t^{\ell(w)}.
\end{equation}

The following factorization of 
$W(t)$ is due to Solomon \cite{solomon:chevalley}:
\begin{equation}
\label{eq:solomon}
W(t) = \prod_{i=1}^n(1 + t + \ldots + t^{e_i}),
\end{equation}
where $\{e_1, \ldots, e_n\}$ is the 
set of exponents of $W$.
See also Macdonald \cite{macdonald:coxeter}.

In geometric terms, $W(t^2)$ is the Poincar\'e polynomial 
of the flag manifold $G/B$, where $G$ is a semisimple 
Lie group with Weyl group $W$ and $B$ is a Borel subgroup of $G$.
The  formula \eqref{eq:solomon} then gives a well-known 
factorization of this Poincar\'e polynomial.

\subsection{The Poincar\'e polynomial of an ideal}
\label{ss:Weyl}

Fix a subset $R \subset \Phi^+$.
Following \cite[\S 4]{sommerstymoczko}, we say 
that a subset $S \subseteq R$ is \emph{$R$-closed} provided 
if $\alpha, \beta \in S$ and $\alpha + \beta \in R$, 
then also $\alpha + \beta \in S$.
For $\CI$ an ideal in $\Phi^+$,  
we say $S \subset \CI^c$ is of \emph{Weyl type for $\CI$} provided
both $S$ and its complement 
in $\CI^c$ are $\CI^c$-closed.
Let $\CW^\CI$ denote the set of all subsets of $\CI^c$ which are of Weyl type
for $\CI$.
These sets generalize the sets $N(w)$ defined in \eqref{eq:Nw} above. 
For, thanks to \cite[Prop.\ 6.1]{sommerstymoczko},
each such set $S$ of Weyl type for $\CI$ 
is of the form $S = N(w) \cap \CI^c$ for some $w \in W$. 
So in particular, if 
$\CI = \varnothing$, so that $\CI^c = \Phi^+$, the sets of
Weyl type for $\CI = \varnothing$ are 
precisely the sets $N(w)$ in $\Phi^+$; see also \cite{kostant:borelweil}.
Therefore, because of the analogy with \eqref{eq:poncarecoxeter} and following
\cite[\S\S 4, 6]{sommerstymoczko}, we call  
\begin{equation}
\label{eq:st}
\CI(t) := \sum_{S \in \CW^\CI} t^{|S|} 
\end{equation}
the  \emph{Poincar\'e polynomial of $\CI$}.
We can now formulate a further conjecture due to Sommers and Tymoczko
which asserts that the analogue of Solomon's multiplicative 
formula \eqref{eq:solomon} for $W(t)$ holds for $\CI(t)$.

\begin{conjecture}[{\cite[\S 4]{sommerstymoczko}}]
\label{conj:st1}
Let $\CI$ be an ideal in $\Phi^+$. Then 
\begin{equation}
\label{eq:st1}
\CI(t) =  \prod_{i=1}^k(1 + t + \ldots + t^{m^\CI_i}),
\end{equation}
with $m^\CI_i$ the ideal exponents of 
$\CI$ introduced in Definition \ref{def:idealexp}.
\end{conjecture}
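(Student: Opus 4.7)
The plan is to prove Conjecture \ref{conj:st1} by the same inductive strategy used to establish inductive freeness in Theorems \ref{thm:I1I}, \ref{thm:condition}, and \ref{thm:indfree-exceptional}. The argument proceeds by induction on the rank of $\Phi$ together with $|\CI^c|$; the base cases ($\CI = \Phi^+$, or $\Phi$ of rank one) are immediate. For the inductive step, first dispose of the case when $\CA_\CI$ is reducible: a product decomposition $\CI^c = \CI^c_{(1)} \sqcup \CI^c_{(2)}$ induces a bijection $\CW^\CI \cong \CW^{\CI_{(1)}} \times \CW^{\CI_{(2)}}$ with additive weight, yielding $\CI(t) = \CI_{(1)}(t) \cdot \CI_{(2)}(t)$, and the ideal exponents split accordingly; the induction hypothesis then closes this case.

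Next assume $\CA_\CI$ is irreducible and that Condition \ref{cond:linear} holds for some maximal parabolic subsystem $\Phi_0 \subset \Phi$. Set $m := |\Phi_0^c \cap \CI^c|$. The heart of the matter is the identity
\[
\CI(t) \;=\; \CI_0(t) \cdot \bigl(1 + t + t^2 + \ldots + t^m\bigr),
\]
exactly mirroring the way Theorem \ref{thm:I1I}(iii) peels off one factor from an inductively factored structure. Granted this identity, by induction $\CI_0(t) = \prod_{i=1}^{k-1}(1 + t + \ldots + t^{m_i^{\CI_0}})$; combining with the new cyclic factor and invoking the equality of exponent sets $\{m_i^\CI\} = \{m_i^{\CI_0}\} \cup \{m\}$ delivered by Theorem \ref{thm:I1I}, we obtain the desired product formula for $\CI(t)$.

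The proof of the factorization identity should proceed bijectively. For $S \in \CW^\CI$, decompose $S = T \sqcup U$ with $T := S \cap \Phi_0^+$ and $U := S \cap \Phi_0^c$. One then shows that $T \in \CW^{\CI_0}$, while $U$ is forced to be an initial segment of the linear chain $\Phi_0^c \cap \CI^c$, giving exactly $m+1$ choices. The main obstacle is to verify that the $\CI^c$-closure constraints on $S$ and on $\CI^c \setminus S$ translate cleanly into the $\CI_0^c$-closure on $T$ together with the initial-segment condition on $U$, with no residual coupling. Here the second clause of Condition \ref{cond:linear} --- the existence, for each pair $\alpha \ne \beta \in \Phi_0^c \cap \CI^c$, of a $\gamma \in \Phi_0^+$ making them linearly dependent --- is decisive, as it governs the cross-terms $\alpha + \delta$ arising from $\alpha \in \Phi_0^c \cap \CI^c$ and $\delta \in \Phi_0^+ \cap \CI^c$. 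This is precisely the combinatorial feature that produces the modular hyperplane in Lemma \ref{lem:condition-modular}, and one expects a close combinatorial parallel with the proof of Theorem \ref{thm:I1I}(iii).

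Finally, the irreducible cases in which Condition \ref{cond:linear} fails for every maximal parabolic must be handled separately. The cases $\CI = \varnothing$ and $\CI = \{\theta\}$ are treated directly --- the former by Solomon's formula \eqref{eq:solomon}, the latter by an elementary calculation --- and the remaining exceptional instances are reducible by explicit case analysis, leaving at most a small residue of unsettled cases in the exceptional types (most prominently in $E_8$), in parallel with the $6178$ unresolved instances of Theorem \ref{thm:indfree}.
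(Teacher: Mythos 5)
Your reducible case and your high-level plan (induct on rank and on $|\CI^c|$, treat $\CI=\varnothing$ by Solomon and $\CI=\{\theta\}$ separately, and acknowledge the unresolved exceptional residue) match the structure of the paper's partial results (Theorems \ref{thm:zeta-factors}, \ref{thm:zeta-factors-count}, \ref{thm:zeta-factored-final}). However, the core step --- the factorization $\CI(t) = \CI_0(t)\cdot(1+t+\cdots+t^m)$ under Condition \ref{cond:linear} --- is handled by the paper through a route you do not take, and the direct bijection you propose is incorrect as stated.

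Your bijection asserts that for $S\in\CW^\CI$, writing $S = T \sqcup U$ with $T = S\cap\Phi_0^+$ and $U = S\cap\Phi_0^c$, the set $U$ is forced to be an \emph{initial segment} of the fixed chain $\gamma_1\prec\gamma_2\prec\cdots\prec\gamma_m$ in $\Phi_0^c\cap\CI^c$, independently of $T$, giving a clean product $\CW^\CI \cong \CW^{\CI_0}\times\{0,\ldots,m\}$. This already fails in type $A_3$ with $\CI=\varnothing$ and $\Phi_0$ the standard $A_2$: take $w=s_2s_3$, so $N(w) = \{\alpha_2,\,\alpha_2+\alpha_3\}$, giving $T=\{\alpha_2\}$ and $U=\{\alpha_2+\alpha_3\}=\{\gamma_2\}$, which is not an initial segment. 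What is true --- and this is exactly the content of the Bj\"orner--Edelman--Ziegler argument --- is that the \emph{fibre} of each $T$ is a chain of length $m$ in which $|U|$ runs through $0,1,\ldots,m$, but the specific sets $U$ occurring in the fibre depend on $T$; they are nested by inclusion but need not be initial segments of the fixed height chain. Your cross-term analysis correctly flags the coupling between $T$ and $U$, but the coupling is a genuine obstruction to the naive product decomposition, not an expected formality.

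The paper sidesteps this entirely. Via \eqref{eq:ai}, the Sommers--Tymoczko identification $\CI(t) = \zeta(P(\CA_\CI,B),t)$ converts the combinatorics of Weyl-type sets into the geometry of the poset of regions. Lemma \ref{lem:condition-modular} shows that Condition \ref{cond:linear} produces a modular element $Z$ of rank $r(\CA_\CI)-1$ with $(\CA_\CI)_Z = \CA_{\CI_0}$, and Lemma \ref{lem:modular-zeta} (which is essentially \cite[Thm.~4.4]{bjoerneredelmanziegler}) then yields $\zeta(P(\CA_\CI,B),t) = \zeta(P(\CA_{\CI_0},B_0),t)\cdot(1+t+\cdots+t^m)$ by fibrewise chain-counting, without ever needing to identify $U$ explicitly as a subset of $\Phi_0^c\cap\CI^c$. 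If you insist on a purely combinatorial route, you would need to prove the fibrewise chain structure directly, which in practice amounts to re-deriving Lemma \ref{lem:modular1} and the linear ordering of fibres --- so you should either import the poset-of-regions machinery as the paper does or carefully carry out the fibre analysis without the false initial-segment claim. Finally, to be scrupulous, note that the conjecture remains open: your argument, like the paper's, proves it only when one of the reducibility/Condition \ref{cond:linear}/boundary-ideal hypotheses holds, leaving the $6178$ unresolved cases in $E_8$ and the general $D_n$ family.
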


For $\CI = \varnothing$ the identity \eqref{eq:st1} specializes to 
Solomon's formula \eqref{eq:solomon}.
In \cite[Thm.\ 4.1]{sommerstymoczko}, 
Sommers and Tymoczko gave a proof of the identity \eqref{eq:st1}
based on case by case arguments in the following cases along with 
computer checks for $F_4$ and $E_6$.

\begin{theorem}
[{\cite[Thm.\ 4.1]{sommerstymoczko}}]
\label{thm:st1-classical}
Let $\Phi$ be of type $A_n$, $B_n$, $C_n$, or $G_2$
and let $\CI$ be an ideal in $\Phi^+$.
Then $\CI(t)$ satisfies \eqref{eq:st1}.  
\end{theorem}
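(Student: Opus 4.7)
The plan is to proceed by induction on the rank of $\Phi$, handling the rank-$2$ cases $A_2$, $B_2$, $G_2$ by direct inspection. For the inductive step I treat types $A_n$, $B_n$, $C_n$ uniformly, exploiting the supersolvability of $\CA_\CI$ from Theorem \ref{thm:AnBn}. If $\CA_\CI$ is reducible, then $\CI^c$ decomposes along the irreducible components, the $\CI^c$-closure condition decouples, Weyl-type subsets factor into products, and $\CI(t)$ splits as the product of Poincar\'e polynomials of ideals in strictly smaller root systems, each of the required form by induction.

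If $\CA_\CI$ is irreducible, then supersolvability together with the mechanism underlying Theorem \ref{thm:I1I}(i) (which furnishes a modular coatom of $L(\CA_\CI)$, cf.\ Lemmas \ref{lem:ideallocal} and \ref{lem:condition-modular}) produces a maximal parabolic subsystem $\Phi_0 \subset \Phi$ satisfying Condition \ref{cond:linear}. Writing the chain as $\Phi^c_0 \cap \CI^c = \{\beta_1 \prec \cdots \prec \beta_m\}$, with $m = m_s^\CI = |\CA_\CI \setminus \CA_{\CI_0}|$ by the exponent formula of Theorem \ref{thm:I1I}, the goal reduces to the multiplicative identity
\begin{equation*}
\CI(t) \;=\; \CI_0(t)\cdot \bigl(1 + t + \cdots + t^m\bigr).
\end{equation*}
Granted this, the inductive hypothesis applied to the ideal $\CI_0 \subseteq \Phi_0^+$, combined with $\exp \CA_\CI = \{\exp \CA_{\CI_0}, m\}$, yields the full factorization.

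The heart of the argument is a cardinality-preserving bijection
\begin{equation*}
\Psi\colon \CW^\CI \;\longrightarrow\; \CW^{\CI_0}\times\{0,1,\ldots,m\},\qquad S\longmapsto \bigl(S\cap \Phi_0^+,\; |S\cap \Phi^c_0|\bigr),
\end{equation*}
so that $|S| = |S\cap \Phi_0^+| + |S \cap \Phi^c_0|$. That $S\cap \Phi_0^+$ lies in $\CW^{\CI_0}$ follows immediately from the $\CI^c$-closure of $S$ and its complement, together with the fact that sums of elements of $\Phi_0^+$ landing in $\CI_0^c$ already lie in $\CI^c$. The main obstacle is the surjective-injective claim: for each fixed $S_0 \in \CW^{\CI_0}$ and each $j \in \{0,1,\ldots,m\}$, exactly one extension $S \in \CW^\CI$ exists with $S \cap \Phi_0^+ = S_0$ and $|S \cap \Phi^c_0| = j$. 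To establish this, one studies the admissible $T := S \cap \Phi^c_0$: the $\CI^c$-closure of $S$ forces sums $\beta + \gamma \in \CI^c$ with $\beta \in T$, $\gamma \in S_0$ to lie in $T$, and dually for the complement of $S$. Invoking the linear-dependence relations of Condition \ref{cond:linear} between any pair $\beta_i, \beta_{i'}$ and a suitable $\gamma \in \Phi_0^+$ (which, since sums of two elements of $\Phi^c_0$ are never roots, must yield $\beta_{i'} = \beta_i + \gamma$ up to signs), a careful case analysis on the membership of $\gamma$ in $S_0$ or in $\CI_0^c \setminus S_0$ shows that the admissible subsets $T$ form a maximal chain $\varnothing = T_0 \subsetneq T_1 \subsetneq \cdots \subsetneq T_m = \{\beta_1, \ldots, \beta_m\}$ in the Boolean lattice with $|T_j| = j$, uniquely determined by $S_0$. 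Summing $t^{|S|}$ over $\CW^\CI$ through $\Psi$ then yields the displayed identity and completes the induction.
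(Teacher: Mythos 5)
Your proof takes a genuinely different route from the paper. The paper's argument for this theorem is a one-line combination of three cited results: Theorem \ref{thm:AnBn} gives supersolvability of $\CA_\CI$ in these types, Theorem \ref{thm:mult-zeta} (Bj\"orner--Edelman--Ziegler / Jambu--Paris) gives the multiplicative form of the rank-generating function $\zeta(P(\CA_\CI,B),t)$ for supersolvable $\CA_\CI$, and \eqref{eq:ai} identifies $\zeta(P(\CA_\CI,B),t)$ with $\CI(t)$; see Remark \ref{rem:weyl-regions}(iii). You instead attempt a direct combinatorial bijection on the Weyl-type sets $\CW^\CI$ themselves, inducting on rank. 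Conceptually this is close to the mechanism inside the proof of Lemma \ref{lem:modular-zeta}, but transposed from regions to the $N(w)\cap\CI^c$ picture, and it is a more elementary, self-contained line of attack.

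However, there are gaps. First, a minor logical slip: supersolvability of $\CA_\CI$ does not, by itself, \emph{produce} a maximal parabolic $\Phi_0$ satisfying Condition \ref{cond:linear}. The correct input is the other direction: for the canonical $\Phi_0$ of type $A_{n-1}$, $B_{n-1}$, $C_{n-1}$, Sommers and Tymoczko verify directly (\S 7 of their paper, cited in the proof of Theorem \ref{thm:AnBn}) that Condition \ref{cond:linear} holds whenever $\Phi_0^c\cap\CI^c\ne\varnothing$; supersolvability is a \emph{consequence}. You should invoke that fact, not supersolvability, to get your $\Phi_0$.

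The more serious gap is in the claim that the admissible $T$'s form a chain. Your parenthetical assertion ``sums of two elements of $\Phi_0^c$ are never roots'' is false in type $C_n$: for $\Phi_0$ the standard $C_{n-1}$ obtained by deleting $\alpha_1$, one has $(e_1 - e_j) + (e_1 + e_j) = 2e_1 \in \Phi^+$. Consequently, when $\CI = \varnothing$ (so that $2e_1 \in \CI^c$), the $\CI^c$-closure of $S$ \emph{does} impose a nontrivial constraint coming from sums of two elements of $T$, a case your analysis explicitly omits (you only treat mixed sums $\beta + \gamma$ with $\beta \in T$, $\gamma \in S_0$). The chain claim is still true in this example --- already in $C_2$ one sees for $S_0 = \varnothing$ the chain $\varnothing \subsetneq \{e_1{-}e_2\} \subsetneq \{e_1{-}e_2, 2e_1\} \subsetneq \Phi_0^c$, which \emph{skips} $e_1{+}e_2$ precisely because of the extra closure constraint, contradicting any picture in which $T_j$ is simply an initial segment of the height chain $\beta_1 \prec \cdots \prec \beta_m$ --- but the reasoning you supply to justify it is incorrect. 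The ``careful case analysis'' you promise must also handle pairs both lying in $T$, and it must explain why the resulting $T_j$'s still form a chain even though they are no longer height-initial segments. As written, the argument does not establish this for type $C_n$.
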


We present an alternative proof of Theorem \ref{thm:st1-classical}
based on 
Theorems \ref{thm:AnBn} and \ref{thm:mult-zeta} and \eqref{eq:ai},
see Remark \ref{rem:weyl-regions}(iii).
Conjecture \ref{conj:st1} is still open for the infinite family of type 
$D_n$ for $n \ge 4$, as well as for types $E_7$ and $E_8$.

In \cite[Thm.\ 9.2]{sommerstymoczko}, 
Sommers and Tymoczko gave a uniform 
proof of \eqref{eq:st1}  for all types 
for $\CI = \{\theta\}$
using an alternate formula for $W(t)$ due to Macdonald, 
\cite[Cor.\ 2.5]{macdonald:coxeter}.

\begin{theorem}
\label{thm:st1-penultimate}
Let $\theta$ be the highest root of $\Phi^+$ and let 
$\CI = \{\theta\}$. Then 
$\CI(t)$ satisfies \eqref{eq:st1}.  
\end{theorem}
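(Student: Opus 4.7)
The plan is to reduce the claim to a Macdonald-type product identity for $\CI(t)$, combining Macdonald's alternate formula for $W(t)$ with Sommers--Tymoczko's parametrisation of $\CW^\CI$.

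First I would identify the ideal exponents. Letting $h$ denote the Coxeter number and $e_1<\dots<e_n=h-1$ the exponents of $W$: since $\theta$ is the unique positive root of height $h-1$, the height partition of $\CI^c=\Phi^+\setminus\{\theta\}$ agrees with that of $\Phi^+$ except that its single part of height $h-1$ drops to $0$. Dualising and invoking the Shapiro--Steinberg--Kostant identification for $\Phi^+$ gives $m_i^\CI=e_i$ for $1\le i\le n-1$ and $m_n^\CI=h-2$. By Solomon's formula \eqref{eq:solomon}, the target identity \eqref{eq:st1} is therefore equivalent to
\[
\CI(t)\;=\;W(t)\cdot\frac{1+t+\dots+t^{h-2}}{1+t+\dots+t^{h-1}}. \quad (\dagger)
\]

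Next I would combine $(\dagger)$ with Macdonald's alternate product expression
\[
W(t)\;=\;\prod_{\alpha\in\Phi^+}\frac{1-t^{\hgt(\alpha)+1}}{1-t^{\hgt(\alpha)}}
\]
from \cite[Cor.~2.5]{macdonald:coxeter}. Cancelling the single factor contributed by $\theta$ reduces $(\dagger)$ to the restricted Macdonald-type identity
\[
\CI(t)\;=\;\prod_{\alpha\in\CI^c}\frac{1-t^{\hgt(\alpha)+1}}{1-t^{\hgt(\alpha)}}. \quad (\star)
\]
To establish $(\star)$, I would invoke \cite[Prop.~6.1]{sommerstymoczko}: every $S\in\CW^\CI$ has the form $N(w)\cap\CI^c$ for some $w\in W$, and for $\CI=\{\theta\}$ the fibre of $w\mapsto N(w)\cap\CI^c$ over $S$ has size one or two, with size two precisely when both $S$ and $S\cup\{\theta\}$ arise as $N(w')$ for some $w'\in W$. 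Decomposing
\[
\sum_{w\in W}t^{|N(w)\cap\CI^c|}
\;=\;\sum_{w\colon \theta\notin N(w)}t^{\ell(w)}+t^{-1}\sum_{w\colon \theta\in N(w)}t^{\ell(w)}
\;=\;\sum_{S\in\CW^\CI}(\text{fibre size at }S)\cdot t^{|S|}
\]
and substituting the product expression for $W(t)$ displayed above will then yield $(\star)$.

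The main obstacle will be the multiplicity analysis in the last display. The crucial simplification provided by $\CI=\{\theta\}$ is that $\theta$ is the \emph{unique} positive root of maximal height: only a single factor needs to be extracted from Macdonald's telescoping product, and correspondingly the doubled fibres of $w\mapsto N(w)\cap\CI^c$ contribute in a uniform, type-independent way. Extending this approach to more general ideals is substantially harder, since the analogous telescoping does not decouple so cleanly from the combinatorics of $\CI$; this is consistent with the fact that Conjecture \ref{conj:st1} remains open in types $D_n$, $E_7$, and $E_8$.
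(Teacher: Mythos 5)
The paper itself does not prove this theorem; it is quoted from \cite[Thm.\ 9.2]{sommerstymoczko}, with the remark that Sommers--Tymoczko's proof uses Macdonald's alternate formula for $W(t)$. So your proposal is necessarily a reconstruction of the Sommers--Tymoczko argument. Your reduction is sound up to a point: the identification $m_i^\CI = e_i$ for $i < n$ and $m_n^\CI = h-2$ is correct, since removing the unique root of height $h-1=e_n$ drops only the largest part of the conjugate partition by one; and combining Solomon's formula with Macdonald's product $W(t) = \prod_{\alpha>0}(1-t^{\hgt\alpha+1})/(1-t^{\hgt\alpha})$ correctly reduces \eqref{eq:st1} to your identity $(\star)$, namely $\CI(t) = \prod_{\alpha\in\CI^c}(1-t^{\hgt\alpha+1})/(1-t^{\hgt\alpha})$.

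The final step, however, contains a genuine gap. The display
\[
\sum_{w\in W}t^{|N(w)\cap\CI^c|} \;=\; \sum_{S\in\CW^\CI}(\text{fibre size at }S)\cdot t^{|S|}
\]
is correct, but this quantity is \emph{not} $\CI(t) = \sum_{S\in\CW^\CI}t^{|S|}$: the fibres of size two are counted with multiplicity two, so the left-hand side overshoots $\CI(t)$ by the generating polynomial $B(t)$ of the fibre-size-two sets. For example in type $A_2$ with $\theta = \alpha+\beta$ one has $\sum_w t^{|N(w)\cap\CI^c|} = 1+4t+t^2$ whereas $\CI(t) = (1+t)^2 = 1+2t+t^2$, so $B(t) = 2t$. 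You acknowledge that "the multiplicity analysis in the last display" is the main obstacle, but the assertion that "the doubled fibres\,\dots\,contribute in a uniform, type-independent way" is left unsubstantiated, and it is exactly here that the substantive combinatorial content of \cite[Thm.\ 9.2]{sommerstymoczko} lives. Merely "substituting the product expression for $W(t)$" into the displayed identity does not determine $\CI(t)$, since that identity involves the three independent quantities $C(t)=\sum_{w:\theta\notin N(w)}t^{\ell(w)}$, $D(t)=t^{-1}\sum_{w:\theta\in N(w)}t^{\ell(w)}$ and $B(t)$, and $\CI(t) = C(t)+D(t)-B(t)$ while $W(t) = C(t)+tD(t)$: two relations, three unknowns. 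A correct argument must determine $B(t)$ (equivalently, pin down the two partial sums over $\{w:\theta\in N(w)\}$ and $\{w:\theta\notin N(w)\}$), and your sketch does not indicate how to do this.
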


Thanks to work of Tymoczko \cite{tymoczko},
for $\Phi$ of classical type, $\CI(t^2)$
is the Poincar\'e polynomial of the 
\emph{regular nilpotent Hessenberg variety $\CH_\CI$}
associated with $\CI$.
It follows from Theorem \ref{thm:st1-classical} that for 
$\Phi$ of type $A_n$, $B_n$, or $C_n$, 
the Poincar\'e polynomial of $\CH_\CI$ admits a factorization as in 
 \eqref{eq:st1}, see \cite[Thm.\ 10.2]{sommerstymoczko}.

Our second aim is to address Conjecture \ref{conj:st1}. 
In this context  
Condition \ref{cond:linear} is also a crucial inductive tool.
Here are our main results in this direction.

\begin{theorem}
\label{thm:zeta-factors}
Let $\Phi$ be a reduced root system with 
Weyl arrangement $\CA = \CA(\Phi)$.
Let $\CI$ be an ideal in $\Phi^+$.
Suppose that either $\CA_\CI$ is reducible, or else
$\CA_\CI$ is irreducible and there is a 
 maximal parabolic subsystem of $\Phi$
such that Condition \ref{cond:linear} is satisfied.
Suppose that for every proper
parabolic subsystem of $\Phi$ 
the Poincar\'e polynomials of all ideals 
factor as in \eqref{eq:st1}.
Then $\CI(t)$ also factors as in \eqref{eq:st1}, and so  
Conjecture \ref{conj:st1}
holds in these instances.
\end{theorem}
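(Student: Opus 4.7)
My approach is to treat the reducible and irreducible cases separately, reducing in each to Poincar\'e polynomials of ideals in proper parabolic subsystems, where the hypothesis supplies the desired factorization. Suppose first that $\CA_\CI$ is reducible. Then the simple system decomposes orthogonally as $\Pi = \Pi_1 \sqcup \Pi_2$ into proper parts, generating parabolic subsystems $\Phi_1, \Phi_2$; setting $\CI_j := \CI \cap \Phi_j^+$ yields an ideal in each proper parabolic $\Phi_j$. Since no positive root of $\Phi_1$ sums with a positive root of $\Phi_2$ to a root of $\Phi$, the two $\CI^c$-closure conditions decouple, so $S \subseteq \CI^c$ is of Weyl type for $\CI$ if and only if $S = S_1 \sqcup S_2$ with $S_j$ of Weyl type for $\CI_j$. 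Hence $\CI(t) = \CI_1(t) \cdot \CI_2(t)$. Because the height function is compatible with the orthogonal decomposition, the ideal exponents of $\CI$ form the multiset union of those of $\CI_1$ and $\CI_2$, so applying the hypothesis to both factors yields \eqref{eq:st1} for $\CI$.

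For the irreducible case, let $\Phi_0$ be the maximal parabolic subsystem furnished by Condition \ref{cond:linear} and set $\CI_0 := \CI \cap \Phi_0^+$. Enumerate $\Phi^c_0 \cap \CI^c$ as the chain $\alpha_1 \prec \alpha_2 \prec \cdots \prec \alpha_m$ under $\preceq$, with $m = |\Phi^c_0 \cap \CI^c|$. Theorem \ref{thm:I1I} ensures that $m = m_s^\CI$ is the largest ideal exponent of $\CI$ and that the remaining ideal exponents of $\CI$ coincide with those of $\CI_0$. The central combinatorial claim is the bijection
\[
\CW^\CI \;\longleftrightarrow\; \CW^{\CI_0} \times \{0, 1, \ldots, m\}, \qquad S \longmapsto \bigl(S \cap \Phi_0^+,\; |S \cap (\Phi^c_0 \cap \CI^c)|\bigr),
\]
asserting both that $S \cap \Phi_0^+ \in \CW^{\CI_0}$ and that $S \cap (\Phi^c_0 \cap \CI^c) = \{\alpha_1, \ldots, \alpha_j\}$ is an initial segment of the chain. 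Granting this, $|S|$ decomposes additively and summing yields $\CI(t) = \CI_0(t) \cdot (1 + t + \cdots + t^m)$; the hypothesis applied to $\CI_0$ together with the above exponent identification then delivers \eqref{eq:st1}.

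The principal obstacle is establishing this bijection. That $S \cap \Phi_0^+$ lies in $\CW^{\CI_0}$ is immediate upon restricting the two $\CI^c$-closure conditions to sums landing in $\Phi_0^+$, together with the inclusion $\CI_0^c \subseteq \CI^c$. The crucial point is the initial-segment assertion: if $\alpha_j \in S$ but $\alpha_i \notin S$ for some $i < j$, the linear dependence hypothesis of Condition \ref{cond:linear} supplies $\gamma \in \Phi_0^+$ with a nontrivial linear relation among $\alpha_i, \alpha_j, \gamma$, typically of the form $\alpha_j = \alpha_i + \gamma$; a case analysis according to whether $\gamma$ lies in $\CI_0$ or in $\CI_0^c$, and in the latter case whether $\gamma \in S$, produces in each subcase a violation of $\CI^c$-closure for either $S$ or $\CI^c \setminus S$. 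For the converse assembly of a Weyl type subset from a pair $(S_0, j)$, the only new sums to inspect involve a chain element plus a root in $\Phi_0^+$, and the linear dependence hypothesis forces any such sum lying in $\CI^c$ to itself lie on the chain, where the initial-segment structure makes the verification automatic.
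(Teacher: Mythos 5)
Your approach is genuinely different from the paper's. The paper first identifies $\CI(t)$ with the rank-generating function $\zeta(P(\CA_\CI,B),t)$ of the poset of regions of $\CA_\CI$ via \eqref{eq:ai} (Remark \ref{rem:weyl-regions}(i)), and then argues geometrically: in the reducible case the multiplicativity \eqref{eq:posregprod} of $\zeta$ under products finishes it, and in the irreducible case Condition \ref{cond:linear} furnishes (Lemma \ref{lem:condition-modular}) a corank-one modular element whose localization is $\CA_{\CI_0}$, after which Lemma \ref{lem:modular-zeta}(ii) gives the factorization. You instead try to prove the factorization directly on the combinatorial side by exhibiting a bijection $\CW^\CI \to \CW^{\CI_0}\times\{0,\dots,m\}$, which would be a more elementary route that bypasses the poset-of-regions geometry entirely. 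Your reducible case is essentially sound, though it is $\Pi\cap\CI^c$ rather than $\Pi$ itself that decomposes into orthogonal pieces (an irreducible $\Phi$ can still give a reducible $\CA_\CI$).

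The irreducible case, however, contains a genuine gap: the initial-segment assertion is false, and with it the claimed inverse of your bijection. Take $\Phi=A_2$, $\Pi=\{\alpha,\beta\}$, $\CI=\varnothing$, $\Phi_0=\{\pm\alpha\}$; Condition \ref{cond:linear} holds with $\Phi_0^c\cap\CI^c=\{\beta,\alpha+\beta\}$ and $\gamma=\alpha$. The set $S=\{\alpha,\alpha+\beta\}=N(s_\beta s_\alpha)$ is of Weyl type, yet $S\cap(\Phi_0^c\cap\CI^c)=\{\alpha+\beta\}$, a final segment, not the initial segment $\{\beta\}$. Your case analysis misses exactly the configuration $\gamma=\alpha_j-\alpha_i\in\CI_0^c\cap S$: there neither $S$ nor $\CI^c\setminus S=\{\beta\}$ violates $\CI^c$-closure, so no contradiction is available. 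Consequently the proposed inverse $(S_0,j)\mapsto S_0\sqcup\{\alpha_1,\dots,\alpha_j\}$ fails: for $(S_0,j)=(\{\alpha\},1)$ it yields $\{\alpha,\beta\}$, which is not $\Phi^+$-closed since $\alpha+\beta\notin\{\alpha,\beta\}$. The map $S\mapsto(S\cap\Phi_0^+,\,|S\cap\Phi_0^c\cap\CI^c|)$ is in fact a bijection, but the fibre over a fixed $S_0$ is a chain of Weyl-type sets whose intersections with $\Phi_0^c\cap\CI^c$ are nested in an $S_0$-dependent linear order (over $S_0=\{\alpha\}$ one gets $\varnothing\subset\{\alpha+\beta\}\subset\{\beta,\alpha+\beta\}$), so no uniform segment description along $\preceq$ can work. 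Establishing this nested-fibre structure, with the ranks in each fibre ranging exactly over $\{0,\dots,m\}$, is precisely what the modular-element and poset-of-regions machinery provides; without an argument for it your proof is incomplete.
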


Thanks to Corollary \ref{cor:d4-zeta},
Conjecture \ref{conj:st1} holds for $D_4$. 
So in principle, we can argue by induction for type $D_n$ using  
Theorem \ref{thm:zeta-factors}. Inductively 
Theorem \ref{thm:zeta-factors} covers the bulk of all instances 
in type $D_n$ in the following sense.
It is straightforward 
to see that there are just $2^{n-2}$ ideals $\CI$ in $D_n$  
for which Condition \ref{cond:linear} is not satisfied
with respect to $\Phi_0$ being the standard subsystem of type $D_{n-1}$.
In addition, in three of these $2^{n-2}$ instances 
Conjecture \ref{conj:st1} always holds,
see Remark \ref{rem:st1-dn}.    
This means, assuming the factorization result for $D_{n-1}$, 
it follows from Theorem \ref{thm:zeta-factors}
that it holds for all but $2^{n-2}-3$ instances in $D_n$ as well.
Nevertheless, the general case for type $D_n$ is still unresolved.

The following settles 
Conjecture \ref{conj:st1} for most instances in  
the exceptional types.

\begin{theorem}
\label{thm:zeta-factors-count}
Let $\Phi$ be an irreducible root system of exceptional type
with Weyl arrangement $\CA = \CA(\Phi)$.
Let $\CI$ be an ideal in $\Phi^+$. 
Suppose that $\CI$ satisfies one of the following conditions
\begin{itemize}
\item[(i)]
$\CA_\CI$ is reducible;
\item[(ii)]
$\CA_\CI$ is irreducible and there is a 
 maximal parabolic subsystem of $\Phi$
such that Condition \ref{cond:linear} is satisfied; or 
\item[(iii)]
$\CI = \{\theta\}$ or $\CI = \varnothing$.
\end{itemize}
Suppose that for every proper
parabolic subsystem of $\Phi$ 
the Poincar\'e polynomials of all ideals
factor as in \eqref{eq:st1}.
Then $\CI(t)$ also factors as in \eqref{eq:st1} and so 
Conjecture \ref{conj:st1}
holds in these instances.

Table \ref{table:cond} gives the number of all $\CA_\CI$ 
which satisfy one of the conditions above.
\end{theorem}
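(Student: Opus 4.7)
The plan is to reduce the factorization claim to results already at hand and then address the enumeration in Table \ref{table:cond} as a separate step. Conditions (i) and (ii) are exactly the hypotheses of Theorem \ref{thm:zeta-factors}, so under the standing assumption that \eqref{eq:st1} holds for every ideal in every proper parabolic subsystem of $\Phi$, the desired factorization for $\CI(t)$ is immediate in those two cases; no new argument is required.

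For condition (iii) I split into two subcases. If $\CI = \{\theta\}$, the factorization is exactly Theorem \ref{thm:st1-penultimate}, which Sommers and Tymoczko established uniformly for all types using Macdonald's alternate formula for $W(t)$. If $\CI = \varnothing$, then $\CI^c = \Phi^+$; by \cite[Prop.~6.1]{sommerstymoczko} the Weyl-type subsets of $\CI^c$ coincide with the inversion sets $N(w)$ for $w \in W$, so $\varnothing(t) = W(t)$ by \eqref{eq:poncarecoxeter}. The ideal exponents $m_i^\varnothing$ coincide with the exponents $e_i$ of $W$ by the Shapiro--Steinberg--Kostant theorem, and Solomon's formula \eqref{eq:solomon} then yields \eqref{eq:st1}.

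The remaining task is to justify the numerical entries in Table \ref{table:cond}. These are obtained by an explicit enumeration of all ideals in $\Phi^+$ for each exceptional type, together with a check, for each ideal, of whether one of the three conditions is met: reducibility of $\CA_\CI$, presence of a maximal parabolic $\Phi_0 \subset \Phi$ satisfying Condition \ref{cond:linear}, or $\CI \in \{\varnothing, \{\theta\}\}$. The totals coincide with the second row of Table \ref{table:cond}, since these are precisely the same conditions tabulated for Theorem \ref{thm:indfree-exceptional}. I expect the main obstacle to lie here rather than in the factorization itself: in type $E_8$ one must process $25080$ ideals and test Condition \ref{cond:linear} against each of the eight maximal parabolic subsystems. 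This is a finite but computationally substantial check, for which I would reuse the enumeration code that produced the counts for Theorem \ref{thm:indfree-exceptional}.
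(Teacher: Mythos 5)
Your proposal is correct and follows essentially the same route as the paper: cases (i) and (ii) are handled by Theorem \ref{thm:zeta-factors}, case (iii) by Theorem \ref{thm:st1-penultimate} together with Solomon's formula \eqref{eq:solomon}, and the counts in Table \ref{table:cond} are inherited verbatim from the enumeration carried out for Theorem \ref{thm:indfree-exceptional} in Section \ref{S:indfree}. Your extra remark spelling out why $\varnothing(t) = W(t)$ via \cite[Prop.~6.1]{sommerstymoczko} and the Shapiro--Steinberg--Kostant theorem is a harmless expansion of what the paper leaves implicit.
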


For $\Phi$ of exceptional type, 
we see from the data in Table \ref{table:cond}
that there is a large number of
cases when it does follow from 
Theorem \ref{thm:zeta-factors-count} that
$\CI(t)$ factors as in \eqref{eq:st1}.
Sommers and Tymoczko 
have already checked computationally 
that Conjecture \ref{conj:st1} holds 
in types $F_4$ and $E_6$.
It also holds for $D_4$, by Corollary \ref{cor:d4-zeta}.
A.\ Schauenburg was able to confirm this
also for root systems of type
$D_n$ for $5 \le n \le 7$ and $E_7$ by direct computation.
Combining these computational results with 
Theorem \ref{thm:zeta-factors-count}, 
we obtain the following.

\begin{theorem}
\label{thm:zeta-factored-final}
Let $\Phi$ be an irreducible root system of exceptional type
with Weyl arrangement $\CA = \CA(\Phi)$.
Let $\CI$ be an ideal in $\Phi^+$. 
Then Conjecture \ref{conj:st1}
holds for $\CI$ with the possible 
exception when $W$ is of type $E_8$
and $\CI$ is one of $6178$ ideals.
\end{theorem}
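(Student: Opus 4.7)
The plan is to prove Theorem \ref{thm:zeta-factored-final} by working through the exceptional types in increasing rank, combining the inductive reduction of Theorem \ref{thm:zeta-factors-count} with the computational verifications already listed. For $F_4$ and $E_6$, Sommers and Tymoczko have already checked Conjecture \ref{conj:st1} by computer for every ideal, and Schauenburg has done the same for $E_7$; these three types therefore require nothing further and are settled without any exceptional ideals.

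For the remaining type $E_8$ the approach is to apply Theorem \ref{thm:zeta-factors-count}. The first step is to verify the standing hypothesis of that theorem, namely that for \emph{every} proper parabolic subsystem $\Phi_0 \subset E_8$ the Poincar\'e polynomial of each ideal in $\Phi_0^+$ factors as in \eqref{eq:st1}. Every such $\Phi_0$ has rank at most $7$, and its irreducible components lie among $A_k$ ($k \le 7$), $B_k, C_k$ (where they arise), $D_k$ for $4 \le k \le 7$, $E_6$, and $E_7$. For the classical types $A_k, B_k, C_k, G_2$ the factorization is given by Theorem \ref{thm:st1-classical}. For $D_4$ it is provided by Corollary \ref{cor:d4-zeta}, and for $D_5, D_6, D_7$ by Schauenburg's computations. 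For $E_6$ and $E_7$ it has been established in the first step. When $\Phi_0$ is reducible with components $\Phi_0 = \Phi_0^{(1)} \oplus \cdots \oplus \Phi_0^{(r)}$, an ideal $\CI_0$ decomposes accordingly as $\CI_0 = \CI_0^{(1)} \sqcup \cdots \sqcup \CI_0^{(r)}$ with
\[
\CI_0(t) \;=\; \prod_{j=1}^r \CI_0^{(j)}(t),
\]
and the ideal exponents of $\CI_0$ are the multiset union of those of the $\CI_0^{(j)}$, so factorization for the irreducible components propagates to the whole. Thus the hypothesis of Theorem \ref{thm:zeta-factors-count} is met for $E_8$.

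Invoking Theorem \ref{thm:zeta-factors-count} now yields the factorization \eqref{eq:st1} for every ideal $\CI \subseteq E_8^+$ such that $\CA_\CI$ is reducible, or $\CA_\CI$ is irreducible with Condition \ref{cond:linear} satisfied for some maximal parabolic subsystem, or $\CI \in \{\varnothing, \{\theta\}\}$. By the tabulation recorded in Table \ref{table:cond}, these conditions cover $18902$ of the $25080$ ideals in $E_8^+$, leaving exactly $25080 - 18902 = 6178$ ideals as the possible exceptions claimed in the statement. Combined with the unconditional verifications in $F_4, E_6, E_7$ from the first step, this completes the proof.

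The principal nontrivial input, and the only place where genuine combinatorial work is hidden, is the enumeration underlying Table \ref{table:cond}: one must traverse all ideals in $E_8^+$, test reducibility of $\CA_\CI$, and for each irreducible $\CA_\CI$ check whether Condition \ref{cond:linear} is met for at least one maximal parabolic $\Phi_0 \subset E_8$. Once that enumeration is granted, the remainder of the proof is pure assembly of cited results. Resolving the $6178$ remaining ideals in $E_8$ lies outside the scope of this argument and presumably requires a dedicated computer computation of comparable difficulty to the Barakat--Cuntz verification of inductive freeness of the $E_8$ Weyl arrangement.
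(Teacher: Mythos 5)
Your proof is correct and follows the paper's intended argument: combine the computational verifications (Sommers--Tymoczko for $F_4$ and $E_6$, Schauenburg for $D_5$--$D_7$ and $E_7$, Corollary \ref{cor:d4-zeta} for $D_4$, Theorem \ref{thm:st1-classical} for type-$A$ and $G_2$ components) with Theorem \ref{thm:zeta-factors-count} applied to $E_8$, using the count $25080-18902=6178$ from Table \ref{table:cond}. You have also usefully made explicit the step the paper leaves implicit, namely checking the inductive hypothesis of Theorem \ref{thm:zeta-factors-count} over all proper parabolic subsystems of $E_8$, including the reducible case via multiplicativity of $\CI(t)$ and of the ideal exponents across direct factors.
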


\begin{remarks}
\label{rem:zeta-factors}
(i). 
Our proofs of 
Theorems \ref{thm:zeta-factors} and \ref{thm:zeta-factors-count}
utilize the observation from \cite[\S 12]{sommerstymoczko} that 
the Poincar\'e polynomial $\CI(t)$ associated with $\CA_\CI$
coincides with the rank generating function of the poset of regions of 
$\CA_\CI$, see \eqref{eq:ai}.
This allows us to study the former by means of  the latter.

(ii).
Thanks to work of  Bj\"orner, Edelman, and Ziegler 
\cite[Thm.\ 4.4]{bjoerneredelmanziegler}, respectively  
Jambu and Paris \cite[Prop.\ 3.4, Thm.\ 6.1]{jambuparis:factored},
the rank generating function of the poset of regions of 
a real arrangement  which is supersolvable, respectively inductively factored, 
admits a multiplicative decomposition which 
is equivalent to \eqref{eq:st1} for an arrangement of ideal type,
according to \eqref{eq:ai}, 
see Theorem \ref{thm:mult-zeta}. 

Therefore, there is independent interest in 
the supersolvable and inductively factored 
instances among the $\CA_\CI$.
The former have already been characterized 
by Hultman in \cite{hultman:koszul}.

(iii).
In \cite[\S 12]{sommerstymoczko}, 
Sommers and Tymoczko  speculate about 
an equivalence of both of their theorems
\cite[Thms.\ 4.1, 11.1]{sommerstymoczko},
cf.~Theorems \ref{thm:ideals} and \ref{thm:st1-classical}.
The fact that all $\CA_\CI$ are supersolvable in these instances
implies both results, 
cf.~Theorems \ref{thm:AnBn} and \ref{thm:mult-zeta}.
Indeed, 
the similarity of the formulations of  
Theorems \ref{thm:condition}, \ref{thm:indfree-exceptional}, 
\ref{thm:zeta-factors}, and \ref{thm:zeta-factors-count}
confirm a deeper parallelism between both 
conjectures. This connection appears to stem from 
Condition \ref{cond:linear} and the fact that this in turn 
entails the presence 
of a modular element in $L(\CA_\CI)$ of rank 
$r(\CA_\CI)-1$
(cf.~Lemmas \ref{lem:ideallocal} and \ref{lem:condition-modular}), 
which ultimately furnishes the induction argument, 
by means of Theorem \ref{thm:I1I}(ii) and 
Lemma \ref{lem:modular-zeta}. 
\end{remarks}

\subsection{Supersolvable and inductively factored arrangements of ideal type}

It follows from \cite{hultman:koszul} that 
there are always non-supersolvable arrangements of ideal type
in all Dynkin types other than the ones covered in Theorem \ref{thm:AnBn}.
Likewise for the notion of inductive factoredness.
In view of Conjecture \ref{conj:st1} it
is rather natural to investigate inductive factoredness
for the $\CA_\CI$, cf.~Remark \ref{rem:zeta-factors}(ii).
Theorem \ref{thm:I1I}(iii) proves to be
an equally useful inductive tools in this regard.
We give an indication 
for $\Phi$ of type $D_n$ for $n \ge 4$ and 
$E_6$, $E_7$, and $E_8$, where the situation is already  
quite different to the one covered in Theorem \ref{thm:AnBn}, 
as demonstrated in our next result.
 
Let $\theta$ be the highest root in $\Phi$.
Then $h = \hgt(\theta) + 1$ is the 
\emph{Coxeter number} of $W$. For $1 \le t \le h$, let 
$\CI_t$ be the ideal consisting of all roots of height at least $t$, i.e.
\[
\CI_t :=\{\alpha \in \Phi^+ \mid \hgt(\alpha) \ge t\}
\]
In particular, we have $\CI_1 = \Phi^+$ and $\CI_h = \varnothing$.

\begin{theorem}
\label{thm:d4notss}
Let $\Phi$ be of type $D_n$ for $n \ge 4$, $E_6$, $E_7$, or $E_8$.
Let $\CI$ be an ideal in $\Phi^+$.
Then the following hold:
\begin{itemize}
\item[(i)]
if $\CI \subseteq \CI_4$, 
then $\CA_\CI$ is not supersolvable;
\item[(ii)]
if $\CI \subseteq \CI_5$, 
then $\CA_\CI$ is not inductively factored.
\end{itemize}
\end{theorem}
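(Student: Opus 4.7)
Both parts reduce to the base case $\Phi = D_4$ via localisation at a flat associated with a $D_4$ sub-Dynkin diagram. For each $\Phi$ listed, the Dynkin diagram possesses a trivalent vertex which, together with its three neighbours, spans a sub-Dynkin diagram of type $D_4$. Denote the corresponding four simple roots by $\Pi' \subset \Pi$ and let $\Phi' \subset \Phi$ be the resulting $D_4$ parabolic subsystem.

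\textbf{Base case $\Phi = D_4$.} Since $\CI_4$ consists of just the two roots of heights $4$ and $5$, the ideals $\CI \subseteq \CI_4$ in $D_4$ are exactly $\varnothing$, $\{\theta\}$, and $\CI_4$. For part (i), the case $\CI = \varnothing$ is Theorem \ref{thm:ssW}(i); the two remaining cases admit a direct finite verification, either by exhibiting no modular coatom in $L(\CA_\CI)$ or by invoking Hultman's classification in \cite{hultman:koszul}. The ideals $\CI \subseteq \CI_5 = \{\theta\}$ in $D_4$ are $\varnothing$ and $\{\theta\}$, and an analogous finite check establishes that neither $\CA(D_4)$ nor $\CA_{\{\theta\}}$ in $D_4$ is inductively factored.

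\textbf{Inductive step and conclusion.} Assume now $\Phi$ is of type $D_n$ with $n \ge 5$, or $E_6$, $E_7$, or $E_8$. The hypothesis $\CI \subseteq \CI_4$ (or $\CI_5$) forces $\CI^c \supseteq \Pi$, so the flat
\[
Y := \bigcap_{\alpha \in \Pi'} H_\alpha
\]
lies in $L(\CA_\CI)$. A direct computation identifies $(\CA_\CI)_Y$ with $\CA_{\CI_0}$, where $\CI_0 := \CI \cap \Phi'^+$ is the induced ideal in $\Phi'^+$ (see Definition \ref{def:I1}). Heights are preserved when restricting to a standard parabolic subsystem, so the hypothesis transfers: $\CI_0 \subseteq \CI_4(\Phi')$ in part (i) and $\CI_0 \subseteq \CI_5(\Phi')$ in part (ii). The base case then says $\CA_{\CI_0}$ is not supersolvable, respectively not inductively factored. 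Since both properties descend to localisations of an arrangement — for supersolvability this is the standard fact that closed intervals in a supersolvable geometric lattice are supersolvable, and for inductive factoredness it follows from the inheritance of a nice partition by $\CA_X$ — we conclude that $\CA_\CI$ fails the corresponding property.

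\textbf{Main obstacle.} The crux of the argument is the base case at $D_4$: showing that $\CA_{\{\theta\}}$ and $\CA_{\CI_4}$ in $D_4$ are not supersolvable, and that $\CA_{\{\theta\}}$ in $D_4$ is not inductively factored. These are finite but non-trivial lattice-theoretic checks (at most $11$ hyperplanes each) in which the genuine content of the theorem is concentrated; the inductive step itself is mechanical once one has the localisation identity $(\CA_\CI)_Y = \CA_{\CI_0}$ and the two preservation-under-localisation statements. A secondary technical point, should the localisation preservation for inductive factoredness not already be available in the preliminaries, is to derive it from the definition via the restriction of a nice partition to $\CA_Y$.
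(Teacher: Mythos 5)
Your proposal is correct and follows essentially the same route as the paper: reduce to the $D_4$ base case (which the paper verifies directly in Lemma~\ref{lem:d4}) by localising $\CA_\CI$ at the flat of a standard $D_4$ parabolic subsystem, noting that heights are preserved under parabolic restriction and that supersolvability and inductive factoredness descend to localisations (Lemma~\ref{lem:ideallocal} and Corollary~\ref{cor:ssideal}). One small clarification on the technical point you flagged: restriction of a nice partition to $\CA_Y$ gives localisation-closure for the class of \emph{nice} arrangements, not directly for \emph{inductively factored} ones (the latter is the stronger statement Proposition~\ref{prop:indfaclocal}, from \cite{muellerroehrle:factored}); your niceness route still works, but only because the $D_4$ base-case arrangements for $\CI\subseteq\CI_5$ are in fact shown in Lemma~\ref{lem:d4} to be not even nice (cf.\ Remark~\ref{rem:d4notnice}), so either justification closes the argument.
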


\begin{theorem}
\label{thm:d4ss}
Let $\Phi$ be of type $D_n$ for $n \ge 4$, $E_6$, $E_7$, or $E_8$.
Let $\CI$ be an ideal in $\Phi^+$.
Then the following hold:
\begin{itemize}
\item[(i)]
if $\CI \supseteq \CI_3$, 
then $\CA_\CI$ is supersolvable;
\item[(ii)]
if $\CI \supseteq \CI_4$, 
then $\CA_\CI$ is inductively factored.
\end{itemize}
\end{theorem}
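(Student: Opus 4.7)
The plan is to prove both parts by induction on the rank of $\Phi$, using Theorem \ref{thm:I1I} applied to a carefully chosen maximal parabolic. The key observation, which relies on the simply-laced hypothesis, is this: if $\alpha_0$ is a simple root at an end of the Dynkin diagram and $\alpha_1$ is its unique neighbor, then the positive roots involving $\alpha_0$ of height at most $3$ are exactly $\alpha_0$, $\alpha_0+\alpha_1$, and the roots $\alpha_0+\alpha_1+\alpha_2$ as $\alpha_2$ ranges over the remaining neighbors of $\alpha_1$. Taking $\Phi_0 := \langle \Pi\setminus\{\alpha_0\}\rangle$, this gives tight control over the set $\Phi_0^c\cap\CI^c$, which is where Condition \ref{cond:linear} lives.

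For part (i), assume $\CI \supseteq \CI_3$ and pick any endpoint $\alpha_0$. Then $\Phi_0^c \cap \CI^c \subseteq \{\alpha_0,\, \alpha_0+\alpha_1\}$. This set is linearly ordered with at most one root of each occurring height, and its two elements (if both present) are linearly dependent with $\alpha_1 \in \Phi_0^+$, so Condition \ref{cond:linear} holds. The ideal $\CI_0 = \CI \cap \Phi_0^+$ still satisfies $\CI_0 \supseteq \CI_3(\Phi_0)$ because heights in $\Phi_0$ agree with heights in $\Phi$ on $\Phi_0^+$. Theorem \ref{thm:I1I}(i), combined with the induction hypothesis (or with Theorem \ref{thm:AnBn} whenever $\Phi_0$ turns out to be of classical type), gives supersolvability of $\CA_\CI$; the descent terminates at type $A_3$, handled directly by Theorem \ref{thm:AnBn}.

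For part (ii), assume $\CI \supseteq \CI_4$. The same recipe runs, but now $\Phi_0^c \cap \CI^c$ may contain height-$3$ roots, one per neighbor $\alpha_2$ of $\alpha_1$ different from $\alpha_0$. To preserve the unique-root-per-height part of Condition \ref{cond:linear}, I must choose $\alpha_0$ so that $\alpha_1$ has degree at most $2$ in the Dynkin diagram. Such a choice is available for $D_n$ with $n\ge 5$ (any non-branching endpoint) and for $E_6, E_7, E_8$ (the far end of the long arm), and in each case removing $\alpha_0$ produces a maximal parabolic of type $D$. The three roots $\alpha_0 \prec \alpha_0+\alpha_1 \prec \alpha_0+\alpha_1+\alpha_2$ are linearly ordered and pairwise linearly dependent via $\alpha_1$, $\alpha_2$, or $\alpha_1+\alpha_2$ from $\Phi_0^+$, so Condition \ref{cond:linear} holds, and Theorem \ref{thm:I1I}(iii) propagates inductive factoredness from $\CA_{\CI_0}$ to $\CA_\CI$ down the chains $E_n \to D_{n-1}$ and $D_n \to D_{n-1}$.

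The main obstacle is the base case $\Phi = D_4$ in part (ii): each endpoint of the $D_4$ diagram has the central (branching) node as its unique neighbor, so the argument above produces two height-$3$ roots in $\Phi_0^c\cap\CI^c$ and Condition \ref{cond:linear} fails. I would handle $D_4$ directly, enumerating modulo triality the finite list of ideals $\CI$ in $D_4^+$ with $\CI\supseteq\CI_4$ and exhibiting for each $\CA_\CI$ a nice partition in the sense of Definition \ref{def:indfactored}. Part (i) for $D_4$ requires no special treatment, as the degree-$2$ condition on $\alpha_1$ plays no role there and the reduction to $A_3$ via Theorem \ref{thm:AnBn} goes through unchanged.
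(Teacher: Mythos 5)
Your proposal is correct and it takes a genuinely more uniform route than the paper's. The paper proves (i) and (ii) by descending along $D_n \to D_{n-1} \to \cdots \to D_4$ (resp.\ $E_n \to D_{n-1}$) and doing a case analysis in (ii) over \emph{which} height-$3$ roots belong to $\CI$ — using $\Phi_0$ of type $D_{n-1}$ when $\CI$ contains a root of the form $e_i - e_{i+3}$ or $e_{n-3}+e_n$, and switching to $\Phi_0$ of type $A_{n-1}$ when the only height-$3$ root in $\CI$ is $e_{n-2}+e_{n-1}$; in the $E$ cases the paper appeals to Tables~\ref{table:height3} and \ref{table:height4}. Your observation that for an endpoint $\alpha_0$ whose unique neighbor $\alpha_1$ has degree at most $2$ the set $\Phi_0^c\cap\CI^c$ is \emph{always} contained in the chain $\{\alpha_0,\alpha_0+\alpha_1,\alpha_0+\alpha_1+\alpha_2\}$ (since $\CI\supseteq\CI_4$ kills everything of height $\ge 4$) makes a single choice of $\Phi_0$ work for every such $\CI$ simultaneously, bypassing the paper's root-by-root case split and the tables. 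Your treatment of (i) is also slightly cleaner: by removing a fork end of $D_n$ (resp.\ $\alpha_2$ of $E_n$) you land directly on a type-$A$ parabolic and conclude from Theorem~\ref{thm:AnBn}, whereas the paper routes everything through $D_{n-1}$ and therefore needs Lemma~\ref{lem:d4} as a base case even for part (i). Both approaches require the hands-on $D_4$ base case for (ii): the paper's Lemma~\ref{lem:d4} constructs an explicit inductive factorization of $\CA_{\CI_4}$ via Theorem~\ref{thm:add-del-factored}, and you defer to an analogous computation.

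Two minor inaccuracies worth fixing. First, for $E_7$ and $E_8$ the ``far end of the long arm'' is $\alpha_7$, resp.\ $\alpha_8$ in Bourbaki's labeling, and removing it gives $E_6$, resp.\ $E_7$, not type $D$; to get a maximal parabolic of type $D$ you must remove $\alpha_1$. Either choice satisfies the degree-$2$ condition and supports the induction (you only need $\Phi_0$ to be of type $A$, $D$ or $E$ of smaller rank, not specifically $D$), so the argument survives, but the claim that every such removal yields type $D$ is off. Second, ``a nice partition in the sense of Definition~\ref{def:indfactored}'' conflates niceness (Definition~\ref{def:factored}) with inductive factoredness; for the $D_4$ base case you must exhibit an \emph{inductive} factorization, e.g.\ by running Theorem~\ref{thm:add-del-factored} as the paper does, since a nice partition alone does not entail inductive factoredness.
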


Thanks to 
Theorem \ref{thm:superindfree} and Proposition \ref{prop:indfactoredindfree}, 
in all instances covered in Theorem \ref{thm:d4ss}, 
$\CA_\CI$ is inductively free.

While in types $A_n$, $B_n$, $C_n$ and $G_2$, 
the notions of supersolvability and 
inductive factoredness coincide for all $\CA_\CI$, 
according to Theorem \ref{thm:AnBn}, 
in contrast,  
in type $D_n$ for $n \ge 4$, $E_6$, $E_7$, or $E_8$, the 
arrangement 
$\CA_{\CI_4}$ is inductively factored but not supersolvable,
thanks to Theorems \ref{thm:d4notss}(i) and \ref{thm:d4ss}(ii).
Note that Theorems \ref{thm:d4notss}(i) and \ref{thm:d4ss}(i)
also both follow readily from \cite{hultman:koszul}.

\bigskip

The paper is organized as follows. In \S\S \ref{ssect:arrangements}
-- \ref{ssect:modular} we recall some basic terminology and introduce 
further notation 
on hyperplane arrangements and record some basic facts on modular elements
in the lattice of intersections of an arrangement.
This is followed by brief sections on the fundamental notion on free
and inductively free arrangements, including Terao's addition deletion Theorem 
\ref{thm:add-del}.
In \S\S \ref{ssect:supersolve} and \ref{ssect:factored}, 
the concepts of supersolvable, nice and inductively factored
arrangements are recalled.

It is worth noting that 
all of these properties above are inherited by arbitrary localizations. 
In Lemmas \ref{lem:modular-indfree} and \ref{lem:modular-indfactored}
and Corollary \ref{cor:modular-super}, 
we show that if $X$ in $L(\CA)$ is modular of rank 
$r(\CA)-1$ and the localization $\CA_X$ satisfies any of these properties,
the so does $\CA$ itself.

The number of all 
ideals $\CI$ in $\Phi^+$ was  
first obtained in a case by case analysis by Shi \cite{shi:signtype}. 
An expression for this number 
in closed form was proved by 
Cellini and Papi 
\cite{cellinipapi:nilpotentI, cellinipapi:nilpotentII}, 
see  \S \ref{sect:noideals}.
We also record closed formulas for the former sequences for 
the classical types depending on the rank in 
Table \ref{table:no-ideals-classical}.
They coincide with the famous Catalan sequences.
Concerning the mathematical ubiquity of the latter, see \cite{stanley:book2}.

In the main section 
of the paper, \S \ref{s:idealtype}, we prove 
in Lemma \ref{lem:ideallocal} 
that $\CA_{\CI_0}$ is always a localization of $\CA_\CI$
and in Lemma \ref{lem:condition-modular} that 
if $\CI \subseteq \Phi^+$ and 
$\Phi_0$ satisfy Condition \ref{cond:linear},
then the center
of this localization is a modular element of rank
$r(\CA_\CI)-1$.
This and further results stemming from this fact
are then used to prove Theorem \ref{thm:I1I}.
Subsequently Theorems \ref{thm:condition} and \ref{thm:AnBn}
are then derived as consequences of 
Theorem \ref{thm:I1I}.
This is followed by a proof of Theorem \ref{thm:Dn}.
All of the above crucially depend on 
Condition \ref{cond:linear} and 
Theorem \ref{thm:I1I}.

Section \ref{S:indfree} is devoted to the proof of 
Theorem \ref{thm:indfree-exceptional}.
This crucially depends again 
on Theorems \ref{thm:AnBn} and \ref{thm:I1I}.

In \S \ref{s:rankgenerating}, we recall the basics on the rank-generating
function $\zeta(P(\CA,B), t)$ of the poset of regions 
$P(\CA, B)$ of a real arrangement $\CA$.
In Theorem \ref{thm:mult-zeta} we recall 
theorems of Bj\"orner, Edelman and Ziegler
\cite[Thm.\ 4.4]{bjoerneredelmanziegler}, respectively
Jambu and Paris
\cite[Prop.\ 3.4, Thm.\ 6.1]{jambuparis:factored},
asserting that for $\CA$ supersolvable, respectively 
inductively factored,  
$\zeta(P(\CA,B), t)$ satisfies a factorization
analogous to \eqref{eq:st1}.
The proof of \cite[Thm.\ 4.4]{bjoerneredelmanziegler} 
is used in Lemma \ref{lem:modular-zeta}
to show that 
$\zeta(P(\CA,B), t)$ factors with respect to a 
localization of $\CA$ at a modular element of rank 
$r(\CA) - 1$.
This in turn allows us to derive the desired factorization
of $\zeta(P(\CA,B), t)$ despite the absence of 
supersolvability or inductive factoredness of the ambient arrangement, 
cf.~Example \ref{ex:modular-zeta}.
Lemma \ref{lem:modular-zeta} quickly furnishes the proofs of 
Theorems \ref{thm:zeta-factors}
and \ref{thm:zeta-factors-count}.

Finally, in \S \ref{S:ssAI}, 
after classifying all supersolvable and all inductively factored 
$\CA_\CI$ in type $D_4$ in Lemma \ref{lem:d4}, 
we complete the proofs of 
Theorems \ref{thm:d4notss} and \ref{thm:d4ss}.
The proof of Theorem \ref{thm:d4ss} utilizes 
Condition \ref{cond:linear} and 
Theorem \ref{thm:I1I} once again.

For general information about arrangements, Weyl groups and root systems,  
we refer the reader to \cite{bourbaki:groupes} and 
\cite{orlikterao:arrangements}.

\section{Recollections and Preliminaries}
\label{sect:prelims}

\subsection{Hyperplane arrangements}
\label{ssect:arrangements}
Let $\BBK$ be a field and let
$V = \BBK^\ell$ be an $\ell$-dimensional $\BBK$-vector space.
A \emph{hyperplane arrangement} $\CA = (\CA, V)$ in $V$ 
is a finite collection of hyperplanes in $V$ each 
containing the origin of $V$.
We also use the term $\ell$-arrangement for $\CA$. 
We denote the empty arrangement in $V$ by $\varnothing_\ell$.

The \emph{lattice} $L(\CA)$ of $\CA$ is the set of subspaces of $V$ of
the form $H_1\cap \dotsm \cap H_i$ where $\{ H_1, \ldots, H_i\}$ is a subset
of $\CA$. 
For $X \in L(\CA)$, we have two associated arrangements, 
firstly
$\CA_X :=\{H \in \CA \mid X \subseteq H\} \subseteq \CA$,
the \emph{localization of $\CA$ at $X$}, 
and secondly, 
the \emph{restriction of $\CA$ to $X$}, $(\CA^X,X)$, where 
$\CA^X := \{ X \cap H \mid H \in \CA \setminus \CA_X\}$.
Note that $V$ belongs to $L(\CA)$
as the intersection of the empty 
collection of hyperplanes and $\CA^V = \CA$. 
The lattice $L(\CA)$ is a partially ordered set by reverse inclusion:
$X \le Y$ provided $Y \subseteq X$ for $X,Y \in L(\CA)$.

More generally, for $U$ an arbitrary subspace of $V$, define  
$\CA_U :=\{H \in \CA \mid U \subseteq H\} \subseteq \CA$, the 
\emph{localization of $\CA$ at $U$}.
Note that for $X = \cap_{H \in \CA_U} H$, we have 
$\CA_X = \CA_U$ and $X$ belongs to the 
intersection lattice of $\CA$. 

For $\CA \ne \varnothing_\ell$, 
let $H_0 \in \CA$.
Define $\CA' := \CA \setminus\{ H_0\}$,
and $\CA'' := \CA^{H_0} = \{ H_0 \cap H \mid H \in \CA'\}$,
the restriction of $\CA$ to $H_0$.
Then $(\CA, \CA', \CA'')$ is a \emph{triple} of arrangements,
\cite[Def.\ 1.14]{orlikterao:arrangements}. 

Throughout, we only consider arrangements $\CA$
such that $0 \in H$ for each $H$ in $\CA$.
These are called \emph{central}.
In that case the \emph{center} 
$T(\CA) := \cap_{H \in \CA} H$ of $\CA$ is the unique
maximal element in $L(\CA)$  with respect
to the partial order.
A \emph{rank} function on $L(\CA)$
is given by $r(X) := \codim_V(X)$.
The \emph{rank} of $\CA$ 
is defined as $r(\CA) := r(T(\CA))$.

The \emph{product}
$\CA = (\CA_1 \times \CA_2, V_1 \oplus V_2)$ 
of two arrangements $(\CA_1, V_1), (\CA_2, V_2)$
is defined by
\begin{equation*}
\label{eq:product}
\CA = \CA_1 \times \CA_2 := \{H_1 \oplus V_2 \mid H_1 \in \CA_1\} \cup 
\{V_1 \oplus H_2 \mid H_2 \in \CA_2\},
\end{equation*}
see \cite[Def.\ 2.13]{orlikterao:arrangements}.

\subsection{Modular elements in $L(\CA)$}
\label{ssect:modular}

We say that $X \in L(\CA)$ is \emph{modular}
provided $X + Y \in L(\CA)$ for every $Y \in L(\CA)$,
 \cite[Cor.\ 2.26]{orlikterao:arrangements}.
We require the following characterization of modular members of $L(\CA)$ of rank $r-1$,
see the proof of \cite[Thm.\ 4.3]{bjoerneredelmanziegler}.

\begin{lemma}
\label{lem:modular1}
Let $\CA$ be an arrangement of rank $r$.
Suppose that $X \in L(\CA)$ is of rank $r-1$.
Then $X$ is modular if and only if 
for any two distinct $H_1, H_2 \in \CA \setminus \CA_X$ there
is a $H_3 \in \CA_X$ so that $r(H_1 \cap H_2 \cap H_3) = 2$,
i.e.\ $H_1, H_2, H_3$ are linearly dependent.
\end{lemma}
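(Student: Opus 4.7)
The plan is to reduce to the essential case by quotienting by $T(\CA)$ (which preserves both modularity of $X$ and the triple-dependence condition), so that $X$ becomes a one-dimensional line in $V/T(\CA)$; the crucial consequence is that $X \cap H = 0$ for every $H \in \CA \setminus \CA_X$. For the forward direction, given distinct $H_1, H_2 \in \CA \setminus \CA_X$, set $Y := H_1 \cap H_2 \in L(\CA)$. A dimension count using $X \cap Y \subseteq X \cap H_1 = 0$ gives $\dim(X + Y) = r - 1$, so modularity of $X$ forces $X + Y$ to be a codimension-one element of $L(\CA)$, hence a single hyperplane $H_3 \in \CA$. Since $X \subseteq H_3$ one has $H_3 \in \CA_X$, and $Y \subseteq X + Y = H_3$ yields $H_1 \cap H_2 \cap H_3 = Y$, of rank two.

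For the backward direction, fix $Y \in L(\CA)$ with $X \not\subseteq Y$ (the case $X \subseteq Y$ being trivial), so $X \cap Y = 0$. I would choose a defining subset $S \subseteq \CA$ with $\bigcap S = Y$ minimizing $|S^-|$, where $S^- := S \setminus \CA_X$ and $S^+ := S \cap \CA_X$. The target is to show this minimum equals $1$: the value $0$ is ruled out by $X \not\subseteq Y$, and the value $\ge 2$ contradicts minimality. Indeed, picking distinct $H_1, H_2 \in S^-$ and applying the hypothesis produces $H_3 \in \CA_X$ with $H_1 \cap H_2 \subseteq H_3$; since $H_2 \cap H_3$ and $H_1 \cap H_2$ are both of codimension two and the latter is contained in the former, they coincide. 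Replacing $H_1$ by $H_3$ in $S$ therefore preserves the intersection $Y$ while strictly decreasing $|S^-|$, a contradiction. With $|S^-| = 1$ secured, write $S = \{H_1\} \cup S^+$ and set $Z := \bigcap S^+$; since $X \subseteq Z$ while $X \not\subseteq H_1$ forces $Z \not\subseteq H_1$, one has $\dim Z = \dim Y + 1 = \dim(X + Y)$, and combined with $X + Y \subseteq Z$ this yields $X + Y = Z \in L(\CA)$.

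The main obstacle is the swap step in the backward direction: one must carefully verify that $\bigcap \bigl((S \setminus \{H_1\}) \cup \{H_3\}\bigr) = Y$ after the replacement, which hinges on the codimension-two identity $H_2 \cap H_3 = H_1 \cap H_2$ together with the automatic distinctness of the three hyperplanes (since $H_3 \in \CA_X$ while $H_1, H_2 \notin \CA_X$). Once this swap is validated, both implications come down to routine dimension counts performed after the preliminary essentialization.
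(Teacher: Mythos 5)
Your proof is correct, and it fills in a complete argument where the paper simply refers the reader to the proof of Theorem~4.3 in Bj\"orner--Edelman--Ziegler. The exchange step you were worried about does go through: since $H_3 \in \CA_X$ while $H_1, H_2 \notin \CA_X$, all three hyperplanes are distinct, the inclusion $H_1 \cap H_2 \subseteq H_2 \cap H_3$ between two rank-$2$ flats forces equality, and hence $\bigcap\bigl((S\setminus\{H_1\})\cup\{H_3\}\bigr) = Y$ while $|S^-|$ drops by one. (A small remark: the essentialization is not strictly needed -- even in the non-essential case $X \cap H = T(\CA)$ for every $H \in \CA \setminus \CA_X$, since both are elements of $L(\CA)$ of rank $r$, so the dimension counts go through directly.) The line of reasoning -- a dimension count for the forward direction and a replacement/exchange argument driving $|S^-|$ down to $1$ for the converse -- is essentially the same strategy as in the cited BEZ proof.
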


Next we record a special case of a general fact about modular elements in a 
geometric lattice, cf.~\cite[Prop.\ 2.42]{aigner:combinatorialtheory}
or \cite[Lem.\ 2.27]{orlikterao:arrangements}.

\begin{lemma}
\label{lem:modular2}
Let $\CA$ be an arrangement of rank $r$.
Suppose that $X \in L(\CA)$ is modular of rank $r-1$.
Then the map $L(\CA_X) \to L(\CA^H)$ given by $Y \mapsto Y \cap H$
is a lattice isomorphism for any $H \in \CA \setminus \CA_X$.
In particular, $\CA_X \cong \CA^H$.
\end{lemma}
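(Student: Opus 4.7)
The plan is to verify directly that $\phi(Y) := Y \cap H$ is an order-preserving bijection from $L(\CA_X)$ onto $L(\CA^H)$, with order-preserving two-sided inverse $\psi(Z) := X + Z$; any such bijection between finite lattices is automatically a lattice isomorphism. Well-definedness of $\phi$ and order preservation are immediate: writing $Y \in L(\CA_X)$ as an intersection of members of $\CA_X$ and intersecting with $H$ realizes $\phi(Y)$ as an intersection of members of $\CA^H$. The substance lies in two companion identities, $Y = X + (Y \cap H)$ for $Y \in L(\CA_X)$ and $(X + Z) \cap H = Z$ for $Z \in L(\CA^H)$, which together exhibit $\phi$ and $\psi$ as mutually inverse.

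The first identity I would establish by elementary subspace dimension counting. Since $X \subseteq Y$ but $X \not\subseteq H$, both $X$ and $Y$ lose exactly one dimension upon intersection with $H$, and because $X \cap (Y \cap H) = X \cap H$, the subspace dimension formula yields $\dim(X + (Y \cap H)) = \dim Y$; combined with the inclusion $X + (Y \cap H) \subseteq Y$ this forces equality. Modularity plays no role at this stage, and the identity gives injectivity of $\phi$. For surjectivity, given $Z \in L(\CA^H)$ I would set $Y := X + Z$. Here is the one place the modularity hypothesis enters, and it suffices: modularity of $X$ puts $Y \in L(\CA)$, and $X \subseteq Y$ then places $Y$ in $L(\CA_X)$. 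Dedekind's modular law for subspaces, valid since $Z \subseteq H$, yields
\[
\phi(Y) \;=\; (X + Z) \cap H \;=\; (X \cap H) + Z.
\]

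The final ingredient is the containment $X \cap H \subseteq Z$, equivalently the equality $X \cap H = T(\CA^H)$. This does not depend on modularity of $X$: the inclusion $T(\CA^H) \subseteq X \cap H$ is trivial since $X \cap H \in L(\CA^H)$, and the reverse follows from matching dimensions, namely $\dim(X \cap H) = \ell - r$ by transversality ($X \not\subseteq H$) and $\dim T(\CA^H) = \ell - r$ by the standard identity $r(\CA^H) = r(\CA) - 1$ for restrictions. I anticipate no serious obstacle; the conceptual observation worth emphasizing is that the modularity of $X$ is invoked at a single step, namely to promote the subspace sum $X + Z$ into the intersection lattice $L(\CA)$, while everything else reduces to bookkeeping with dimensions and the Dedekind modular law.
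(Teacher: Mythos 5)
Your proof is correct, but it does not mirror the paper: the paper gives no argument at all for this lemma, instead citing Aigner (Prop.\ 2.42) and Orlik--Terao (Lem.\ 2.27), which treat the statement at the level of abstract geometric lattices (a modular coatom induces an isomorphism between the appropriate intervals). You instead give a self-contained, concrete linear-algebra proof: you exhibit the explicit inverse $Z \mapsto X + Z$ and verify the two companion identities by dimension counting in $V$ and by Dedekind's modular law for subspaces. This buys you transparency about exactly where the hypothesis is used -- modularity of $X$ enters only to ensure $X + Z$ lies in $L(\CA)$, while the rest (including $Y = X + (Y\cap H)$ and $X\cap H = T(\CA^H)$) is pure subspace bookkeeping -- at the cost of losing the generality of the lattice-theoretic statement. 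One cosmetic remark: the identification $X \cap H = T(\CA^H)$ can be seen even more directly via $T(\CA^H) = T(\CA)$ (the center is unaffected by restriction) together with $T(\CA) \subseteq X \cap H$ and the same dimension count; your transversality plus $r(\CA^H) = r(\CA)-1$ route is equivalent. A minor gap worth filling explicitly is that $L(\CA^H) \subseteq L(\CA)$, which is what legitimizes applying modularity to $Z$; it is routine but should be stated.
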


\subsection{Free hyperplane arrangements}
\label{ssect:free}
Let $S = S(V^*)$ be the symmetric algebra of the dual space $V^*$ of $V$.
Let $\Der(S)$ be the $S$-module of $\BBK$-derivations of $S$.
Since $S$ is graded, 
$\Der(S)$ is a graded $S$-module.

Let $\CA$ be an arrangement in $V$. 
Then for $H \in \CA$ we fix $\alpha_H \in V^*$ with
$H = \ker \alpha_H$.
The \emph{defining polynomial} $Q(\CA)$ of $\CA$ is given by 
$Q(\CA) := \prod_{H \in \CA} \alpha_H \in S$.
The \emph{module of $\CA$-derivations} of $\CA$ is 
defined by 
\[
D(\CA) := \{\theta \in \Der(S) \mid \theta(Q(\CA)) \in Q(\CA) S\} .
\]
We say that $\CA$ is \emph{free} if 
$D(\CA)$ is a free $S$-module, cf.\ \cite[\S 4]{orlikterao:arrangements}.

If $\CA$ is a free arrangement, then the $S$-module
$D(\CA)$ admits a basis of $\ell$ homogeneous derivations, 
say $\theta_1, \ldots, \theta_\ell$, \cite[Prop.\ 4.18]{orlikterao:arrangements}.
While the $\theta_i$'s are not unique, their polynomial 
degrees $\pdeg \theta_i$ 
are unique (up to ordering). This multiset is the set of 
\emph{exponents} of the free arrangement $\CA$
and is denoted by $\exp \CA$.

Terao's celebrated \emph{addition deletion theorem} 
which we recall next
plays a 
pivotal role in the study of free arrangements, 
\cite[\S 4]{orlikterao:arrangements}.

\begin{theorem}[\cite{terao:freeI}]
\label{thm:add-del}
Suppose that $\CA$ is non-empty.
Let  $(\CA, \CA', \CA'')$ be a triple of arrangements. Then any 
two of the following statements imply the third:
\begin{itemize}
\item[(i)] $\CA$ is free with $\exp \CA = \{ b_1, \ldots , b_{\ell -1}, b_\ell\}$;
\item[(ii)] $\CA'$ is free with $\exp \CA' = \{ b_1, \ldots , b_{\ell -1}, b_\ell-1\}$;
\item[(iii)] $\CA''$ is free with $\exp \CA'' = \{ b_1, \ldots , b_{\ell -1}\}$.
\end{itemize}
\end{theorem}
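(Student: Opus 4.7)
The plan is to exploit a fundamental short exact sequence of graded $S$-modules that intertwines $D(\CA)$, $D(\CA')$, and $D(\CA'')$, combined with Saito's criterion for freeness. Let $H_0$ denote the hyperplane with $\CA' = \CA \setminus \{H_0\}$, write $\alpha_0 := \alpha_{H_0}$, so the defining forms of $\CA'' = \CA^{H_0}$ are the reductions $\bar\alpha_H := \alpha_H \bmod \alpha_0 S$ for $H \in \CA'$. The first step is to establish the sequence
\[
0 \longrightarrow D(\CA')(-1) \xrightarrow{\ \cdot\alpha_0\ } D(\CA) \xrightarrow{\ \rho\ } D(\CA''),
\]
where $\rho(\theta)$ is the derivation of $S/\alpha_0 S$ induced by $\theta$ (well-defined because $\theta(\alpha_0) \in \alpha_0 S$ for $\theta \in D(\CA)$). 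Injectivity of the first map is clear from degree reasons, and the verification that $\rho(\theta) \in D(\CA'')$ is a direct calculation. The identification $\ker\rho = \alpha_0 D(\CA')$ follows because any $\theta = \alpha_0 \theta' \in D(\CA)$ forces $\theta' \in D(\CA')$: for $H \in \CA'$, $\alpha_0 \theta'(\alpha_H) \in \alpha_H S$ and $\alpha_H$ is coprime to $\alpha_0$, so $\theta'(\alpha_H) \in \alpha_H S$.

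With the sequence in place, I would use Poincar\'e/Hilbert series and Saito's criterion (a collection of $\ell$ homogeneous derivations is an $S$-basis of $D(\CA)$ iff the determinant of their coefficient matrix is a nonzero scalar multiple of $Q(\CA)$). Any two of the freeness statements fix the Poincar\'e series of the corresponding modules as $\sum_i t^{e_i}/(1-t)^\ell$; the additivity
\[
P(D(\CA),t) = t\,P(D(\CA'),t) + P(\mathrm{im}\,\rho, t)
\]
coming from the exact sequence then pins down the Poincar\'e series of the third module and forces $\rho$ to be surjective in the relevant directions. For the implication (i) + (ii) $\Rightarrow$ (iii), once surjectivity is established, a basis of $D(\CA'')$ is obtained by taking $\rho$-images of a suitable subset of a Saito basis of $D(\CA)$, and the degrees match because the $b_\ell$-degree basis element is (up to adjustment by lower-degree elements) a multiple of $\alpha_0$.

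For (i) + (iii) $\Rightarrow$ (ii) and (ii) + (iii) $\Rightarrow$ (i), I would build a basis of the unknown module out of bases of the other two. Given a Saito basis $\eta_1, \ldots, \eta_{\ell-1}$ of $D(\CA'')$ and a Saito basis $\theta_1, \ldots, \theta_\ell$ of $D(\CA')$ (ordered so that $\pdeg \theta_\ell = b_\ell - 1$), lift each $\eta_i$ to some $\tilde\eta_i \in D(\CA)$ with $\rho(\tilde\eta_i) = \eta_i$, and verify that $\{\tilde\eta_1,\ldots,\tilde\eta_{\ell-1}, \alpha_0 \theta_\ell\}$ is a Saito basis of $D(\CA)$ by computing the determinant of coefficient matrices modulo $\alpha_0$ (which reduces to the determinant for $\eta_1,\ldots,\eta_{\ell-1}$ inside $D(\CA'')$, hence a unit multiple of $Q(\CA'')$) and matching total polynomial degrees against $|\CA|$.

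The main obstacle is the surjectivity of $\rho$ and the careful bookkeeping behind Saito's criterion: having derivations of the right degrees is not enough, and one must control the determinant of the coefficient matrix to pin down freeness. Lifting $\eta_i$ through $\rho$ while preserving both degree and $S$-linear independence of the assembled collection is where the proof has its genuine content; the three directions of the biconditional then fall out uniformly once surjectivity and the Saito determinant matching are in hand.
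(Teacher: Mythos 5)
The theorem here is Terao's Addition--Deletion Theorem; the paper cites it from \cite{terao:freeI} (and \cite{orlikterao:arrangements}) without giving a proof, so there is no internal proof to compare against --- what can be assessed is whether your sketch is a viable outline of the known proof.

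Your exact sequence $0 \to D(\CA')(-1) \xrightarrow{\,\cdot\alpha_0\,} D(\CA) \xrightarrow{\,\rho\,} D(\CA'')$ is correct, the degree shift is right, and the identification of $\ker\rho$ with $\alpha_0 D(\CA')$ is sound. The Hilbert-series bookkeeping and the appeal to Saito's criterion are also the right tools, and your sketch of $(i)+(ii)\Rightarrow(iii)$ is on track modulo the surjectivity of $\rho$, which you rightly flag as the main obstacle (the additivity of Hilbert series only determines $P(\mathrm{im}\,\rho,t)$, not $P(D(\CA''),t)$; surjectivity needs a separate Saito-type determinant argument on the images of a basis of $D(\CA)$).

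The genuine gap is in your treatment of $(ii)+(iii)\Rightarrow(i)$. You propose to ``lift each $\eta_i \in D(\CA'')$ to some $\tilde\eta_i \in D(\CA)$ with $\rho(\tilde\eta_i)=\eta_i$,'' but this presupposes exactly the surjectivity of $\rho\colon D(\CA)\to D(\CA'')$, which cannot be assumed when $D(\CA)$ is the unknown module. Lifting $\eta_i$ to $\Der(S)$ is easy, but such a lift is only guaranteed to satisfy $\tilde\eta_i(\alpha_H)\in \alpha_H S + \alpha_0 S$ for $H\in\CA'$, not $\tilde\eta_i(\alpha_H)\in\alpha_H S$, so it need not lie in $D(\CA)$ --- adjusting by $\alpha_0\psi$ leads to a nontrivial system of congruences on $\psi$ that has no obvious solution. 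The standard proof of the addition part instead works on the other side of the triple: one uses the inclusion $D(\CA)\hookrightarrow D(\CA')$ and the map $\pi\colon D(\CA')\to S/\alpha_0 S$, $\theta\mapsto \theta(\alpha_0)\bmod\alpha_0$, with $D(\CA)=\ker\pi$, and then exploits freeness of $\CA'$ and of $\CA''$ to control $\mathrm{im}\,\pi$ and show the kernel is free with the expected exponents. So your outline is correct in spirit for the deletion direction, but the addition direction as written is circular and needs to be replaced by the $\pi$-based argument.
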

There are various stronger notions of freeness
which we discuss in the following subsections.

\subsection{Inductively free arrangements}
\label{ssect:indfree}

Theorem \ref{thm:add-del}
motivates the notion of 
\emph{inductively free} arrangements,  see 
\cite{terao:freeI} or 
\cite[Def.\ 4.53]{orlikterao:arrangements}.

\begin{defn}
\label{def:indfree}
The class $\CIF$ of \emph{inductively free} arrangements 
is the smallest class of arrangements subject to
\begin{itemize}
\item[(i)] $\varnothing_\ell$ belongs to $\CIF$, for every $\ell \ge 0$;
\item[(ii)] if there exists a hyperplane $H_0 \in \CA$ such that both
$\CA'$ and $\CA''$ belong to $\CIF$, and $\exp \CA '' \subseteq \exp \CA'$, 
then $\CA$ also belongs to $\CIF$.
\end{itemize}
\end{defn}

\begin{remark}
\label{rem:indfreetable}
It is possible to describe an inductively free arrangement $\CA$ by means of 
a so called 
\emph{induction table}, cf.~\cite[\S 4.3, p.~119]{orlikterao:arrangements}.
In this process we start with an inductively free arrangement
and add hyperplanes successively ensuring that 
part (ii) of Definition \ref{def:indfree} is satisfied.
This process is referred to as \emph{induction of hyperplanes}.
This procedure amounts to 
choosing a total order on $\CA$, say 
$\CA = \{H_1, \ldots, H_m\}$, 
so that each of the subarrangements 
$\CA_i := \{H_1, \ldots, H_i\}$
and each of the restrictions $\CA_i^{H_i}$ is inductively free
for $i = 1, \ldots, m$.
In the associated induction table we record in the $i$-th row the information 
of the $i$-th step of this process, by 
listing $\exp \CA_i' = \exp \CA_{i-1}$, 
$H_i$, 
as well as $\exp \CA_i'' = \exp \CA_i^{H_i}$, 
for $i = 1, \ldots, m$.
For instance, see 
\cite[Tables 4.1, 4.2]{orlikterao:arrangements}, or 
Table \ref{indtable:modular1} below. 
\end{remark}

The class of free arrangements is closed with respect to taking 
localizations, 
cf.~\cite[Thm.\ 4.37]{orlikterao:arrangements}. 
This also holds for the class $\CIF$, 
\cite[Thm.\ 1.1]{hogeroehrleschauenburg:free}.

\begin{proposition}
\label{prop:indfreelocal}
If $\CA$ is inductively free,
then so is $\CA_U$ for every subspace $U$ in $V$.
\end{proposition}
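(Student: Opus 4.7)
The plan is to argue by induction on $|\CA|$. The base case $|\CA|=0$ is trivial, since then $\CA = \varnothing_\ell$ and $\CA_U = \varnothing_\ell \in \CIF$. For the inductive step, choose a hyperplane $H_0 \in \CA$ witnessing the inductive freeness of $\CA$; so with $\CA' := \CA \setminus \{H_0\}$ and $\CA'' := \CA^{H_0}$, both $\CA'$ and $\CA''$ belong to $\CIF$ with $\exp \CA'' \subseteq \exp \CA'$ in the usual degree-shift pattern of Theorem \ref{thm:add-del}.

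I would then split on whether $U \subseteq H_0$. In the first (easier) case $U \not\subseteq H_0$, the hyperplane $H_0$ is not in $\CA_U$, so $\CA_U = (\CA')_U$; the induction hypothesis applied to $\CA' \in \CIF$ immediately yields $\CA_U \in \CIF$.

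The substantial case is $U \subseteq H_0$, in which $H_0 \in \CA_U$. I would first verify the identifications
\[
\CA_U \setminus \{H_0\} = (\CA')_U, \qquad (\CA_U)^{H_0} = (\CA'')_U,
\]
the second using $U \subseteq H_0$ so that $U \subseteq H_0 \cap H$ if and only if $U \subseteq H$. Since $|\CA'|, |\CA''| < |\CA|$, applying the induction hypothesis to $\CA'$ and to $\CA''$ places both $(\CA')_U$ and $(\CA'')_U$ in $\CIF$.

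The main obstacle is the exponent containment $\exp (\CA_U)^{H_0} \subseteq \exp (\CA_U \setminus \{H_0\})$ required by Definition \ref{def:indfree}(ii), because the ambient inclusion $\exp \CA'' \subseteq \exp \CA'$ does not localize term-by-term in any obvious way. To handle this, I would observe that $\CA$ and $\CA'$ are free (being inductively free), so Terao's localization theorem \cite[Thm.\ 4.37]{orlikterao:arrangements} gives that both $\CA_U$ and $(\CA')_U$ are free. Applying the addition-deletion Theorem \ref{thm:add-del} to the triple $(\CA_U, (\CA')_U, (\CA_U)^{H_0})$ with the two freeness conditions for $\CA_U$ and $(\CA')_U$ already in hand then forces the third: $(\CA_U)^{H_0}$ is free with exponents of the prescribed form, and in particular the required containment $\exp (\CA_U)^{H_0} \subseteq \exp (\CA')_U$ appears as a byproduct. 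Combined with $(\CA')_U,(\CA'')_U \in \CIF$, this verifies the hypotheses of Definition \ref{def:indfree}(ii) and concludes $\CA_U \in \CIF$, completing the induction.
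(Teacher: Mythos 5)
Your overall strategy is sound and, in spirit, certainly in the standard toolkit for this kind of statement (the paper itself defers the proof entirely to \cite[Thm.\ 1.1]{hogeroehrleschauenburg:free}, so there is no in-text proof to compare against). The induction on $|\CA|$, the case split on whether $U\subseteq H_0$, and the identifications $\CA_U\setminus\{H_0\}=(\CA')_U$ and $(\CA_U)^{H_0}=(\CA'')_U$ are all correct and carefully justified.

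There is, however, a genuine gap at the final step. You invoke Theorem \ref{thm:add-del} on the triple $(\CA_U, (\CA')_U, (\CA_U)^{H_0})$ ``with the two freeness conditions for $\CA_U$ and $(\CA')_U$ already in hand.'' But each of the conditions (i), (ii), (iii) in Theorem \ref{thm:add-del} is not merely a freeness assertion: each encodes a \emph{specific} exponent pattern $\{b_1,\ldots,b_{\ell-1},b_\ell\}$, $\{b_1,\ldots,b_{\ell-1},b_\ell-1\}$, $\{b_1,\ldots,b_{\ell-1}\}$ for a \emph{common} choice of $b_1,\ldots,b_\ell$. ``Any two imply the third'' presupposes you already know that two of these exponent relations hold for the same $b_i$'s. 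Knowing only that $\CA_U$ and $(\CA')_U$ are both free does not, on the face of it, put you in position to apply the theorem, because nothing you have said so far guarantees that $\exp(\CA')_U$ is obtained from $\exp\CA_U$ by decrementing a single exponent by~$1$. This is exactly the ``obstacle'' you flag earlier, and the proposed resolution does not yet resolve it.

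The good news is that the missing statement is true and the gap is fillable. What you need is the supplementary fact: \emph{for a triple $(\CB,\CB',\CB'')$, if $\CB$ and $\CB'$ are both free, then $\exp\CB'$ is obtained from $\exp\CB$ by decreasing exactly one exponent by one (and consequently $\CB''$ is free with the remaining $\ell-1$ exponents).} One way to see this is to consider the $S$-linear map $D(\CB')\to S/\alpha_{H_0}S$, $\theta\mapsto \theta(\alpha_{H_0})\bmod\alpha_{H_0}$; its kernel is $D(\CB)$, and the image $I$ is a nonzero graded ideal of $\bar S:=S/\alpha_{H_0}S$. The exact sequence $0\to D(\CB)\to D(\CB')\to I\to 0$ with both outer terms free forces $\operatorname{pdim}_S I\le 1$, whence by Auslander--Buchsbaum $I$ is a maximal Cohen--Macaulay $\bar S$-module, hence free, hence a principal ideal $(\bar g)$. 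Comparing Hilbert series then yields $\sum t^{c_j}-\sum t^{b_i}=t^{\deg\bar g}-t^{\deg\bar g+1}$, which is precisely the one-coordinate decrement. Only after this lemma (or an explicit citation of its standard form in the literature) is the application of Theorem \ref{thm:add-del} legitimate, at which point the containment $\exp(\CA_U)^{H_0}\subseteq\exp(\CA')_U$ does fall out as you describe and the induction closes. You should either cite this supplementary result or include the argument; as written the step is asserted rather than established.
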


For  $X$ in $L(\CA)$ modular of rank $r-1$, 
we get the following converse to Proposition \ref{prop:indfreelocal},
see also \cite[Lem.\ 6.7]{slofstra:schubert}.

\begin{lemma}
\label{lem:modular-indfree}
Let $\CA$ be an arrangement of rank $r$.
Suppose that $X \in L(\CA)$ is modular of rank $r-1$.
If $\CA_X$ is inductively free, then so is $\CA$.
In particular, if $\exp \CA_X = \{0, e_1, \ldots, e_{\ell-1}\}$,
then $\exp \CA = \{e_1, \ldots, e_{\ell-1}, e_\ell\}$,
where $e_\ell := | \CA \setminus \CA_X|$.
\end{lemma}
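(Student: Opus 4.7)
The plan is to construct an inductive-freeness certificate for $\CA$ by adding the hyperplanes in $\CA\setminus\CA_X$ to $\CA_X$ one at a time, using Theorem \ref{thm:add-del} at each step. Concretely, I enumerate $\CA\setminus\CA_X=\{K_1,\ldots,K_n\}$ with $n:=|\CA\setminus\CA_X|$, set $\CA^{(0)}:=\CA_X$ and $\CA^{(i)}:=\CA_X\cup\{K_1,\ldots,K_i\}$ for $1\le i\le n$, and aim to show by induction on $i$ that $\CA^{(i)}$ is inductively free with $\exp\CA^{(i)}=\{i,e_1,\ldots,e_{\ell-1}\}$. The base case $i=0$ is the hypothesis on $\CA_X$, and the case $i=n$ is the desired conclusion, with $e_\ell=n$.

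To run the inductive step through Definition \ref{def:indfree}(ii), I must know that the restriction $(\CA^{(i)})^{K_i}$ is inductively free with exponents $\{e_1,\ldots,e_{\ell-1}\}$. Here modularity of $X$ does the real work. By Lemma \ref{lem:modular1}, whenever $1\le j<i$ there is some $H_{ij}\in\CA_X$ such that $K_i, K_j, H_{ij}$ are linearly dependent, whence $K_i\cap K_j=K_i\cap H_{ij}$. Consequently the previously-added hyperplanes contribute no new restriction of $K_i$, and
\[
(\CA^{(i)})^{K_i}=\{K_i\cap H\mid H\in\CA_X\}=\CA^{K_i}.
\]
Lemma \ref{lem:modular2} provides the lattice isomorphism $L(\CA_X)\to L(\CA^{K_i})$, $Y\mapsto Y\cap K_i$, identifying $\CA^{K_i}$ with $\CA_X$ as arrangements. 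Hence $(\CA^{(i)})^{K_i}$ is inductively free, and its exponents are $\{e_1,\ldots,e_{\ell-1}\}$, the single zero of $\exp\CA_X$ being absorbed on passing from the ambient $V$ to the hyperplane $K_i$.

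With $\CA^{(i-1)}$ inductively free with exponents $\{i-1,e_1,\ldots,e_{\ell-1}\}$ (by inductive hypothesis) and $(\CA^{(i)})^{K_i}$ inductively free with exponents $\{e_1,\ldots,e_{\ell-1}\}$, the multiset inclusion required by Definition \ref{def:indfree}(ii) is manifest, and Theorem \ref{thm:add-del} promotes $\CA^{(i)}$ to an inductively free arrangement with exponents $\{i,e_1,\ldots,e_{\ell-1}\}$. Iterating up to $i=n$ finishes the proof.

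The main obstacle I anticipate is precisely the identity $(\CA^{(i)})^{K_i}=\CA^{K_i}$: a priori, intersections of $K_i$ with newly added hyperplanes $K_j$ could yield traces in $K_i$ not arising from $\CA_X$, which would allow the restrictions entering the induction to vary with $i$ and would block the uniform exponent list needed to invoke addition-deletion. It is at this point alone that the full strength of the modularity of $X$ is used; everything else is routine bookkeeping within the addition-deletion framework.
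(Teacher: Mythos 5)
Your proof is correct and follows essentially the same strategy as the paper's: build an induction table for $\CA$ starting from the inductively free $\CA_X$, add the hyperplanes of $\CA\setminus\CA_X$ one at a time, use modularity to identify each successive restriction with $\CA_X$ (hence with exponents $\{e_1,\ldots,e_{\ell-1}\}$), and invoke the addition part of Theorem~\ref{thm:add-del} at each step. The only (cosmetic) difference is in how modularity is invoked: the paper observes that $X$ remains modular of rank $r-1$ in each intermediate lattice $L(\CA_i)$ and applies Lemma~\ref{lem:modular2} directly to $\CA_i$, whereas you first show via Lemma~\ref{lem:modular1} that $(\CA^{(i)})^{K_i}=\CA^{K_i}=\{K_i\cap H\mid H\in\CA_X\}$ and then apply Lemma~\ref{lem:modular2} to $\CA$; these are two renderings of the same underlying argument, and both are correct.
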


\begin{proof}
We argue by means of 
an induction table for $\CA$ starting with the inductively free 
subarrangement $\CA_X$ and adding hyperplanes 
from $\CA \setminus \CA_X = \{H_1, \ldots, H_{e_\ell}\}$ 
(in any fixed order) successively, 
see Remark \ref{rem:indfreetable}.
Let $\CA_i := \CA_X \cup \{H_1, \ldots, H_i\}$ for $i = 1, \ldots, e_\ell$.
Then, since $X$ is modular of rank $r-1$ also in $L(\CA_i)$, 
for $i = 1, \ldots, e_\ell$ (cf.~the second part of the 
proof of \cite[Thm.\ 4.3]{bjoerneredelmanziegler}),
it follows from 
Lemma \ref{lem:modular2} that $\CA_i^{H_i}$ is 
isomorphic to $\CA_X$ for each $i = 1, \ldots, e_\ell$.

\begin{table}[h]
\renewcommand{\arraystretch}{1.3}
\begin{tabular}{lll}  \hline
  $\exp \CA_i'$ & $H_i$ & $\exp \CA_i''$\\
\hline\hline
$e_1, \ldots, e_{\ell-1}, 0$ 	   &    ${H_1}$       {\hglue 5pt}  & $e_1, \ldots, e_{\ell-1}$ \\
$e_1, \ldots, e_{\ell-1}, 1$	   &    ${H_2}$        & $e_1, \ldots, e_{\ell-1}$ \\
$\vdots$                                  &   $\vdots$                     &          $\vdots$     \\
$e_1, \ldots, e_{\ell-1}, e_\ell-1$	  {\hglue 5pt}  &    ${H_{e_\ell}}$    & $e_1, \ldots, e_{\ell-1}$ \\
$e_1, \ldots, e_{\ell-1}, e_\ell$ 	   &                                     &         \\
\hline                                
\end{tabular}\\
\smallskip
\caption{Induction table for $\CA$ starting at $\CA_X$} 
\label{indtable:modular1} 
\end{table}

It thus follows from Table 
\ref{indtable:modular1} and 
a repeated application of the addition part of 
Theorem \ref{thm:add-del} that $\CA$ is inductively free with 
$\exp \CA = \{e_1, \ldots, e_{\ell-1}, e_\ell\}$.
\end{proof}

\begin{remark}
\label{rem:modular-free}
The same argument as
the one in the proof of 
Lemma \ref{lem:modular-indfree} 
shows that if 
$X \in L(\CA)$ is modular of rank $r-1$
and $\CA_X$ is free, then so is $\CA$.
\end{remark}

Free arrangements behave well with respect to 
the  product construction for arrangements, 
\cite[Prop.\ 4.28]{orlikterao:arrangements}.
This property descends to 
the class $\CIF$,
\cite[Prop.\ 2.10]{hogeroehrle:indfree}. 

\begin{proposition}
\label{prop:product-indfree}
Let $\CA_1, \CA_2$ be two arrangements.
Then  $\CA = \CA_1 \times \CA_2$ is inductively free
if and only if both 
$\CA_1$ and $\CA_2$ are inductively free.
\end{proposition}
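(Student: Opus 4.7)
The plan is to prove both directions simultaneously by induction on $|\CA_1| + |\CA_2|$, using the observation that the product construction distributes over triples. Concretely, for any $H \in \CA_2$ and with $\widetilde H := V_1 \oplus H$ regarded as a hyperplane of $\CA := \CA_1 \times \CA_2$, one checks directly from the definitions that
\[
\CA' = \CA_1 \times \CA_2' \qquad \text{and} \qquad \CA'' \cong \CA_1 \times \CA_2'',
\]
where $\CA_2'$ and $\CA_2''$ denote the deletion and restriction of $\CA_2$ at $H$; a symmetric identity holds when $H \in \CA_1$. Combined with the fact that the exponents of a product of free arrangements are the multiset union of those of the factors, cf.~\cite[Prop.\ 4.28]{orlikterao:arrangements}, this ensures that the exponent containment required in Definition \ref{def:indfree}(ii) passes cleanly between the factors and the product.

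For the ``if'' direction, assume $\CA_1, \CA_2 \in \CIF$. The base case, both arrangements empty, is immediate from Definition \ref{def:indfree}(i). Without loss of generality $\CA_2 \ne \varnothing$; choose $H \in \CA_2$ realising inductive freeness, so that $\CA_2', \CA_2'' \in \CIF$ with $\exp \CA_2'' \subseteq \exp \CA_2'$. By the inductive hypothesis, both $\CA_1 \times \CA_2'$ and $\CA_1 \times \CA_2''$ lie in $\CIF$; forming the multiset union with $\exp \CA_1$ preserves the containment, so Definition \ref{def:indfree}(ii) applied to $\widetilde H$ places $\CA$ in $\CIF$.

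For the ``only if'' direction, suppose $\CA \in \CIF$ and fix an induction chain for $\CA$. After interchanging the roles of $\CA_1$ and $\CA_2$ if necessary, the last hyperplane in this chain has the form $\widetilde H_0 = H_0 \oplus V_2$ with $H_0 \in \CA_1$. Then $\CA' = \CA_1' \times \CA_2$ and $\CA'' \cong \CA_1'' \times \CA_2$ lie in $\CIF$, and the inductive hypothesis applied to each (both have strictly smaller total size) yields $\CA_1', \CA_1'', \CA_2 \in \CIF$. Cancelling the common multiset $\exp \CA_2$ from both sides of $\exp \CA'' \subseteq \exp \CA'$ produces $\exp \CA_1'' \subseteq \exp \CA_1'$, so Definition \ref{def:indfree}(ii) applied to $H_0$ shows $\CA_1 \in \CIF$, while $\CA_2 \in \CIF$ is already in hand.

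The one technical point to watch is the multiset cancellation in the exponent comparison for the ``only if'' direction, but this is automatic from the product exponent formula. No serious obstacle arises; the heart of the argument is the clean distributivity of the product over the triple construction, which lets both implications be packaged into a single induction.
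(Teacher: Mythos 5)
Your proof is correct. Note that the paper does not itself prove this proposition but cites it from \cite[Prop.\ 2.10]{hogeroehrle:indfree}, so there is strictly no in-paper proof to compare against; that said, your argument is the natural one and is, to the best of my knowledge, essentially the argument given in the cited source.

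The two load-bearing observations are exactly right: the triple operation distributes over products (with $\widetilde H = V_1 \oplus H$ one has $\CA' = \CA_1 \times \CA_2'$ and $\CA'' = \CA_1 \times \CA_2''$, as you can verify directly since a hyperplane $K_1 \oplus V_2$ restricts to $K_1 \oplus H$ while $V_1 \oplus K_2$ restricts to $V_1 \oplus (H \cap K_2)$, with no collisions), and the product exponent formula \cite[Prop.\ 4.28]{orlikterao:arrangements} reduces the exponent containment to a multiset cancellation, which is valid since multiset inclusion is compatible with disjoint union. The only point I would flag for clarity is that in the ``only if'' direction the phrase ``the last hyperplane in this chain'' should simply be ``the hyperplane $\widetilde H_0$ witnessing condition (ii) of Definition \ref{def:indfree} for $\CA$''; the WLOG ($\widetilde H_0 = H_0 \oplus V_2$, after swapping factor names if needed) is fine since every hyperplane of a product has one of the two forms. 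The induction on $|\CA_1|+|\CA_2|$ with both directions carried together is exactly what makes the ``only if'' direction go through, since you need to extract $\CA_1'$, $\CA_1''$, $\CA_2 \in \CIF$ from the smaller products $\CA'$ and $\CA''$.
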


\subsection{Supersolvable arrangements}
\label{ssect:supersolve}
The following notion is due to Stanley \cite{stanley:super}. 

\begin{defn}
\label{def:super}
Let $\CA$ be a central arrangement of rank $r$.
We say that $\CA$ is 
\emph{supersolvable} 
provided there is a maximal chain
\[
V = X_0 < X_0 < \ldots < X_{r-1} < X_r = T(\CA)
\]
 of modular elements $X_i$ in $L(\CA)$,
cf.\ \cite[Def.\ 2.32]{orlikterao:arrangements}.
\end{defn}

The class of supersolvable arrangements is closed under 
localization, {\cite{stanley:super}.

\begin{proposition}
\label{prop:superlocal}
If $\CA$ is supersolvable, 
then so is $\CA_U$ for every subspace $U$ in $V$.
\end{proposition}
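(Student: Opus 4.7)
The plan is to lift a modular maximal chain of $\CA$ into $L(\CA_U)$. First I would reduce to the case of a lattice element by setting $Y := \bigcap_{H \in \CA_U} H \in L(\CA)$, so that $\CA_U = \CA_Y$ and $L(\CA_Y)$ is identified with the interval $\{Z \in L(\CA) : Y \subseteq Z\}$, which has poset minimum $V$, maximum $Y$, and rank $r(\CA_Y) = \codim Y$.

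Next, fix a modular maximal chain $V = X_0 < X_1 < \cdots < X_r = T(\CA)$ in $L(\CA)$ furnished by supersolvability, and define $Y_i := X_i + Y$ (subspace sum) for $i = 0, \ldots, r$. Each $Y_i$ lies in $L(\CA)$ by modularity of $X_i$ in $L(\CA)$, and contains $Y$, so $Y_i \in L(\CA_Y)$; the endpoints are $Y_0 = V$ and $Y_r = Y$, the latter because $T(\CA) = \bigcap_{H \in \CA} H \subseteq \bigcap_{H \in \CA_Y} H = Y$. The one genuine point that requires argument is showing each $Y_i$ is modular in the smaller lattice $L(\CA_Y)$: for $Z \in L(\CA_Y)$ one has $Y \subseteq Z$, so $Y_i + Z = X_i + Z$, which lies in $L(\CA)$ by modularity of $X_i$ and contains $Y$, hence lies in $L(\CA_Y)$.

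Finally, the weakly decreasing (in subspace inclusion) sequence $V = Y_0 \supseteq Y_1 \supseteq \cdots \supseteq Y_r = Y$ has consecutive dimension drops of at most $1$, since $X_{i-1} = X_i + \BBK v$ for some $v$ (the chain in $L(\CA)$ being maximal) forces $Y_{i-1} = Y_i + \BBK v$; as the total codimension gain equals $\codim Y = r(\CA_Y)$, collapsing repetitions yields a strict chain of modular elements of the required length in $L(\CA_Y)$, so $\CA_U$ is supersolvable. No real obstacle is anticipated — the proof is essentially bookkeeping with modular elements — but the step that most needs care is the modularity transfer, which hinges on the identity $Y_i + Z = X_i + Z$ valid precisely because $Y \subseteq Z$.
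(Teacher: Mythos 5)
Your proof is correct, and the key step — the modularity transfer — is argued cleanly. The paper itself gives no proof for this proposition; it simply cites Stanley's original paper on supersolvable lattices, where the general lattice-theoretic fact that intervals of supersolvable geometric lattices are supersolvable appears. What you have written is the direct specialization of that argument to the arrangement setting, using the paper's working definition of modularity ($X+Z\in L(\CA)$ for all $Z$). In particular, the identity $Y_i + Z = X_i + Z$ for $Z \supseteq Y$ is exactly the right observation to push modularity down to the interval $[V,Y] \cong L(\CA_Y)$, and the bookkeeping on codimension jumps (each $Y_{i-1} = Y_i + \BBK v$, so drops of at most one, with total drop $r(\CA_Y)$) correctly yields a maximal modular chain after collapsing repeats. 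There is nothing to compare against within the paper, but your argument is the standard one and is complete.
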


We require the following characterization 
of supersolvable arrangements due to 
Bj\"orner, Edelman and Ziegler, 
\cite[Thm.\ 4.3]{bjoerneredelmanziegler}.

\begin{theorem}
\label{thm:super}
Every arrangement of rank at most $2$ is supersolvable.
Let $\CA$ be an arrangement of rank $r \ge 3$.
Then  $\CA$ is supersolvable
if and only if $\CA$ can be
written as the proper disjoint union of two subarrangements
$\CA = \CA_0 \coprod \CA_1$, such that $\CA_0$ is supersolvable
of rank $r-1$ and for any $H_1 \ne H_2$ in $\CA_1$,
there is a $H_3$ in  $\CA_0$ so that
$H_1 \cap H_2 \subseteq H_3$.
\end{theorem}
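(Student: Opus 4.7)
The plan is to treat each rank regime separately. For $r(\CA) \le 2$ the claim is immediate: the chain $V < T(\CA)$ (if $r(\CA) = 1$) consists of modular elements, and for $r(\CA) = 2$ any $H \in \CA$ is modular, since meeting $H$ with any other lattice element produces either $V$ or $H$ itself, giving the required modular chain $V < H < T(\CA)$.

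For the forward direction when $r \ge 3$, I would take a modular chain $V = X_0 < X_1 < \cdots < X_{r-1} < X_r = T(\CA)$ and set $\CA_0 := \CA_{X_{r-1}}$ and $\CA_1 := \CA \setminus \CA_0$. Since $T(\CA_0) = X_{r-1}$ has codimension $r-1$, one has $r(\CA_0) = r-1$. The entire chain $X_0 < \cdots < X_{r-1}$ lies in $L(\CA_0) = L(\CA)_{\le X_{r-1}}$, and a short check comparing joins in $L(\CA)$ with joins in $L(\CA_0)$ shows that each $X_i$ remains modular there, so $\CA_0$ inherits supersolvability. Finally, Lemma \ref{lem:modular1} applied to the modular element $X_{r-1}$ yields precisely the required pair condition: for any $H_1 \ne H_2$ in $\CA \setminus \CA_{X_{r-1}} = \CA_1$ there exists $H_3 \in \CA_{X_{r-1}} = \CA_0$ with $H_1, H_2, H_3$ linearly dependent, equivalently $H_1 \cap H_2 \subseteq H_3$.

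For the backward direction my candidate modular element is $X := T(\CA_0)$, which lies in $L(\CA_0) \subseteq L(\CA)$ and has rank $r-1$. The strategy is: first show $\CA_X = \CA_0$; then Lemma \ref{lem:modular1} immediately promotes the hypothesis on pairs from $\CA_1 = \CA \setminus \CA_X$ into the assertion that $X$ is modular of rank $r-1$ in $L(\CA)$; finally Corollary \ref{cor:modular-super} transfers supersolvability from $\CA_X = \CA_0$ up to $\CA$.

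The main obstacle is the identity $\CA_X = \CA_0$, i.e.\ the claim that no hyperplane in $\CA_1$ contains $X$. If $\CA_1 = \{H\}$ with $X \subseteq H$, then $T(\CA) = X \cap H = X$ would have codimension $r-1$, contradicting $r(\CA) = r$. If $|\CA_1| \ge 2$ and some $H \in \CA_1$ satisfies $X \subseteq H$, then because $r(\CA) = r$ not every hyperplane of $\CA$ can contain $X$, so some $H' \in \CA_1$ has $X \not\subseteq H'$; applying the hypothesis to $\{H, H'\}$ produces $H_3 \in \CA_0$ with $H \cap H' \subseteq H_3$. Since $X \subseteq H$ and $X \subseteq H_3$ while $H \ne H_3$, the codim-$2$ flat $H \cap H_3$ contains $X$; the nested codim-$2$ flats $H \cap H' \subseteq H \cap H_3$ inside $H$ must coincide, forcing $X \subseteq H \cap H' \subseteq H'$, which contradicts $X \not\subseteq H'$. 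Once $\CA_X = \CA_0$ is established the remainder of the argument is routine.
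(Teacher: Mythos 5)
The paper does not actually prove Theorem~\ref{thm:super}; it is cited verbatim as \cite[Thm.\ 4.3]{bjoerneredelmanziegler}, so there is no in-paper argument to compare against. On its own merits, most of your proof is correct: the rank $\le 2$ cases are fine, the forward direction for $r \ge 3$ (taking $\CA_0 := \CA_{X_{r-1}}$, observing that $L(\CA_0)$ is the lower interval $[\hat 0, X_{r-1}]$ so the modular chain restricts, and applying Lemma~\ref{lem:modular1}) is sound, and the two-case argument in the backward direction showing that no $H \in \CA_1$ can contain $X = T(\CA_0)$ — using the rank count when $|\CA_1|=1$ and the nested codimension-$2$ flats when $|\CA_1|\ge 2$ — is correct and is a detail worth spelling out.

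The problem is the final step of the backward direction: you invoke Corollary~\ref{cor:modular-super} to pass from supersolvability of $\CA_X=\CA_0$ and modularity of $X$ to supersolvability of $\CA$. But the paper explicitly introduces Corollary~\ref{cor:modular-super} with the remark ``This is just the reverse implication in Theorem~\ref{thm:super},'' i.e.\ it is derived \emph{from} the theorem you are trying to prove. Within the paper's logical structure the appeal is circular, and the step it papers over is precisely the nontrivial content of the reverse implication: that a maximal chain $V=X_0<\dots<X_{r-1}=X$ of elements modular in the interval $[\hat 0, X]$, when augmented by $T(\CA)$, remains a chain of modular elements in $L(\CA)$. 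Equivalently, one needs the lattice-theoretic fact that if $X$ is modular in a geometric lattice $L$ and $Y\le X$ is modular in $[\hat 0, X]$, then $Y$ is modular in $L$. This is true and provable (by a rank calculation: for $Z\in L$, set $Z'=X\wedge Z$, use modularity of $X$ and of $Y$ in $[\hat 0,X]$, and compare with semimodularity to force $X\wedge(Y\vee Z)=Y\vee Z'$ and hence the modular equality for $(Y,Z)$), but it is not automatic and is the heart of the reverse direction in Bj\"orner--Edelman--Ziegler's argument. Without establishing that lemma independently, your backward direction is incomplete.
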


Thanks to Lemma \ref{lem:modular1} and Theorem \ref{thm:super},
we get a converse to Proposition \ref{prop:superlocal}
for $X$ in $L(\CA)$ modular of rank $r-1$.
(This is just the reverse implication in 
Theorem \ref{thm:super}.)

\begin{corollary}
\label{cor:modular-super}
Let $\CA$ be an arrangement of rank $r$.
Suppose that $X \in L(\CA)$ is modular of rank $r-1$.
If $\CA_X$ is supersolvable, then so is $\CA$.
In particular, if $\exp \CA_X = \{0, e_1, \ldots, e_{\ell-1}\}$,
then $\exp \CA = \{e_1, \ldots, e_{\ell-1}, e_\ell\}$,
where $e_\ell := | \CA \setminus \CA_X|$.
\end{corollary}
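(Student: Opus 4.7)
The plan is to deduce supersolvability of $\CA$ directly from the Björner--Edelman--Ziegler characterization (Theorem \ref{thm:super}), using the hypothesis on $X$ as the source of the required modularity condition, and then to read off the exponents via Lemma \ref{lem:modular-indfree}.

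First I would form the disjoint decomposition $\CA = \CA_0 \coprod \CA_1$ with $\CA_0 := \CA_X$ and $\CA_1 := \CA \setminus \CA_X$. Since the members of $\CA_X$ are precisely those hyperplanes containing $X$, the center $T(\CA_X)$ equals $X$, so $\CA_X$ has rank $r-1$; together with the hypothesis, this shows that $\CA_0$ is supersolvable of rank $r-1$, as required in Theorem \ref{thm:super}. It remains to verify the linear dependence condition on pairs in $\CA_1$. But this is exactly the content of Lemma \ref{lem:modular1}: since $X$ is a modular element of $L(\CA)$ of rank $r-1$, for any two distinct $H_1, H_2 \in \CA \setminus \CA_X$ there exists $H_3 \in \CA_X$ with $r(H_1 \cap H_2 \cap H_3) = 2$, equivalently $H_1 \cap H_2 \subseteq H_3$. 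Theorem \ref{thm:super} then yields that $\CA$ is supersolvable.

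For the exponent statement, I would simply invoke Lemma \ref{lem:modular-indfree}: every supersolvable arrangement is inductively free (Theorem \ref{thm:superindfree}), so $\CA_X$ is inductively free, and the lemma gives $\exp \CA = \{e_1, \ldots, e_{\ell-1}, e_\ell\}$ with $e_\ell = |\CA \setminus \CA_X|$. Alternatively, one can rerun the induction table argument of Lemma \ref{lem:modular-indfree} directly, since that argument only used modularity of $X$ of rank $r-1$ together with Lemma \ref{lem:modular2} (which ensures $\CA_i^{H_i} \cong \CA_X$ at each step).

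There is essentially no obstacle: the corollary is a packaging of the reverse direction already built into Theorem \ref{thm:super}, together with Lemma \ref{lem:modular1} translating the hypothesis on $X$ into the linear dependence condition. The only point that deserves a brief mention in the write-up is that modularity of $X$ in $L(\CA)$ automatically descends to modularity in $L(\CA_i)$ for each intermediate arrangement $\CA_i$ between $\CA_X$ and $\CA$, which is standard and already observed in the proof of Lemma \ref{lem:modular-indfree}; this is what lets the exponent computation go through cleanly.
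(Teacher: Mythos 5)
Your proof is correct and follows the same route the paper takes: the paper states Corollary \ref{cor:modular-super} as an immediate consequence of Lemma \ref{lem:modular1} combined with the reverse implication in Theorem \ref{thm:super}, exactly the decomposition $\CA = \CA_X \coprod (\CA \setminus \CA_X)$ you use. Your handling of the exponents via Lemma \ref{lem:modular-indfree} (or by rerunning its induction table) is likewise the intended reading.
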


Also supersolvable arrangements are 
compatible with the  
product construction for arrangements.

\begin{proposition}
[{\cite[Prop.\ 2.5]{hogeroehrle:super}}]
\label{prop:product-super}
Let $\CA_1, \CA_2$ be two arrangements.
Then  $\CA = \CA_1 \times \CA_2$ is supersolvable
if and only if both 
$\CA_1$ and $\CA_2$ are supersolvable.
\end{proposition}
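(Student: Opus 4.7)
The plan is to exploit the well-known identification of the intersection lattice of a product of arrangements as a product of lattices, $L(\CA_1 \times \CA_2) \cong L(\CA_1) \times L(\CA_2)$, whose elements are of the form $X \oplus Y$ with $X \in L(\CA_1)$, $Y \in L(\CA_2)$, ordered componentwise; see \cite[Prop.\ 2.14]{orlikterao:arrangements}.

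For the forward direction, the plan is to realize each factor $\CA_i$ as a localization of $\CA := \CA_1 \times \CA_2$ and then invoke Proposition \ref{prop:superlocal}. Concretely, localizing $\CA$ at the subspace $V_2 = \{0\} \oplus V_2 \subseteq V_1 \oplus V_2$ selects precisely the hyperplanes of the form $H_1 \oplus V_2$ with $H_1 \in \CA_1$ (the hyperplanes $V_1 \oplus H_2$ cannot contain $V_2$ since $H_2 \subsetneq V_2$). Thus $\CA_{V_2}$ is canonically isomorphic to $\CA_1$, and symmetrically $\CA_{V_1} \cong \CA_2$. Supersolvability of $\CA$ then forces supersolvability of both $\CA_1$ and $\CA_2$ by Proposition \ref{prop:superlocal}.

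For the backward direction, suppose $\CA_1$ and $\CA_2$ are supersolvable with maximal modular chains
\[
V_1 = X_0 < X_1 < \ldots < X_{r_1} = T(\CA_1) \quad \text{and} \quad V_2 = Y_0 < Y_1 < \ldots < Y_{r_2} = T(\CA_2)
\]
in $L(\CA_1)$ and $L(\CA_2)$, respectively. I would concatenate these into the chain
\[
V_1 \oplus V_2 = X_0 \oplus Y_0 < X_1 \oplus Y_0 < \ldots < X_{r_1} \oplus Y_0 < X_{r_1} \oplus Y_1 < \ldots < X_{r_1} \oplus Y_{r_2} = T(\CA)
\]
in $L(\CA)$, which has the correct length $r_1 + r_2 = r(\CA)$. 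The remaining task is to verify that each $X_i \oplus Y_j$ is modular in $L(\CA)$. Using the definition of modularity, for an arbitrary element $Z \oplus W \in L(\CA)$ the sum distributes as
\[
(X_i \oplus Y_j) + (Z \oplus W) = (X_i + Z) \oplus (Y_j + W),
\]
and both summands belong to $L(\CA_1)$ and $L(\CA_2)$ respectively by the assumed modularity of $X_i$ and $Y_j$; hence the right-hand side lies in $L(\CA)$.

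I do not anticipate any serious obstacle. The only mild subtlety is being careful that sums of subspaces of the form $X \oplus Y$ really do distribute componentwise in $V_1 \oplus V_2$ (which they do) and that localization in the product picks out the expected factor, as sketched above.
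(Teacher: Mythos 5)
The paper does not prove this statement itself; it simply cites \cite[Prop.\ 2.5]{hogeroehrle:super}, so there is no in-paper argument to compare against. Your proof is correct and is the standard argument: the forward direction realizes each factor (up to the harmless direct-sum with an empty arrangement, which leaves the intersection lattice unchanged) as a localization $\CA_{\{0\}\oplus V_2}$, resp.\ $\CA_{V_1\oplus\{0\}}$, of the product and invokes Proposition \ref{prop:superlocal}; the backward direction uses the lattice identification $L(\CA_1\times\CA_2)\cong L(\CA_1)\times L(\CA_2)$ together with the fact that vector-space sums in $V_1\oplus V_2$ distribute componentwise, which shows that $X\oplus Y$ is modular whenever $X$ and $Y$ are (with $V_2$ and $T(\CA_1)$ trivially modular, so each element of your concatenated chain qualifies). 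Nothing is missing; the only cosmetic point is that ``$\CA_{V_2}$ is canonically isomorphic to $\CA_1$'' is best read as saying $\CA_{\{0\}\oplus V_2}=\CA_1\times\varnothing_{\ell_2}$ has intersection lattice isomorphic to $L(\CA_1)$, which is all that supersolvability depends on.
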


The connection of this notion with freeness is
due to Jambu and Terao.

\begin{theorem}
[{\cite[Thm.\ 4.2]{jambuterao:free}}]
\label{thm:superindfree}
A supersolvable arrangement is inductively free.
\end{theorem}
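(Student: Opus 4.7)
The plan is to prove Theorem \ref{thm:superindfree} by induction on $r = r(\CA)$, using Lemma \ref{lem:modular-indfree} as the engine for the inductive step. For the base cases $r \le 2$, the claim follows directly from Definition \ref{def:indfree}. Indeed, when $r = 0$ we have $\CA = \varnothing_\ell \in \CIF$; when $r = 1$ the single hyperplane of $\CA$ can be added to $\varnothing_\ell$; and when $r = 2$ one enumerates $\CA = \{H_1,\ldots,H_m\}$ and proceeds by a subsidiary induction on $i$, noting that each restriction $\CA_i^{H_i}$ reduces to a single hyperplane in $H_i$ (any two distinct hyperplanes in a rank-two arrangement meet in the common center $T(\CA_i)$), and that the required containment of exponent multisets is routine.

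For the inductive step, assume $r \ge 3$ and that every supersolvable arrangement of rank less than $r$ is inductively free. By Definition \ref{def:super}, I would pick a maximal chain $V = X_0 < X_1 < \ldots < X_{r-1} < X_r = T(\CA)$ of modular elements of $L(\CA)$, and focus on $X_{r-1}$. As $X_{r-1} \in L(\CA)$ is an intersection of hyperplanes in $\CA$, each of which necessarily belongs to $\CA_{X_{r-1}}$, one has $T(\CA_{X_{r-1}}) = X_{r-1}$, so $r(\CA_{X_{r-1}}) = r - 1$. By Proposition \ref{prop:superlocal}, $\CA_{X_{r-1}}$ is again supersolvable, and the induction hypothesis places it in $\CIF$. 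Finally, since $X_{r-1}$ is modular of rank $r-1$ in $L(\CA)$, Lemma \ref{lem:modular-indfree} yields $\CA \in \CIF$, together with the expected description of $\exp \CA$.

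The conceptual weight in this argument lies entirely in Lemma \ref{lem:modular-indfree}, which builds the required induction table by adjoining the hyperplanes of $\CA \setminus \CA_{X_{r-1}}$ one at a time, each new restriction being isomorphic to $\CA_{X_{r-1}}$ via Lemma \ref{lem:modular2}. Consequently no genuine obstacle arises in the proof of Theorem \ref{thm:superindfree} itself: once the preliminary results on modular elements (Lemmas \ref{lem:modular1}, \ref{lem:modular2}, \ref{lem:modular-indfree}) and on localizations of supersolvable arrangements (Proposition \ref{prop:superlocal}) are in hand, the theorem follows by this short rank induction with no further input.
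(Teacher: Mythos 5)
Your argument is correct. Note that the paper itself does not prove this theorem; it cites it from Jambu--Terao \cite[Thm.~4.2]{jambuterao:free} and provides no proof of its own in \S\ref{ssect:supersolve}. Your reconstruction is essentially the standard argument, and it slots cleanly into the paper's machinery: you identify Lemma \ref{lem:modular-indfree} as the inductive engine, localize at the modular coatom $X_{r-1}$, invoke Proposition \ref{prop:superlocal} to preserve supersolvability, and induct on the rank. The base cases $r \le 2$ are handled carefully and correctly (in particular, your observation that in rank two every restriction $\CA_i^{H_i}$ is the single hyperplane $T(\CA_i) \subset H_i$, so the multiset containment of exponents is automatic). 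There is no circularity, since the proof of Lemma \ref{lem:modular-indfree} rests only on Lemma \ref{lem:modular2} and Theorem \ref{thm:add-del}, and Proposition \ref{prop:superlocal} is imported from Stanley. The one place you could tighten is the phrase ``follows directly from Definition \ref{def:indfree}'' for $r \le 2$ --- you do actually need the small subsidiary induction you then describe, so the word ``directly'' slightly undersells what is happening, but the elaboration you give is complete. In short: a correct proof, aligned with the original Jambu--Terao approach, re-expressed through the paper's Lemma \ref{lem:modular-indfree}.
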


\subsection{Nice and inductively factored arrangements}
\label{ssect:factored}

The notion of a \emph{nice} or \emph{factored} 
arrangement goes back to Terao \cite{terao:factored}.
It generalizes the concept of a supersolvable arrangement, see
\cite[Thm.\ 5.3]{orliksolomonterao:hyperplanes} and 
\cite[Prop.\ 2.67, Thm.\ 3.81]{orlikterao:arrangements}.
Terao's main motivation was to give a 
general combinatorial framework to 
deduce tensor factorizations of the underlying Orlik-Solomon algebra,
see also \cite[\S 3.3]{orlikterao:arrangements}.
We recall the relevant notions  
from \cite{terao:factored}
(cf.\  \cite[\S 2.3]{orlikterao:arrangements}):

\begin{defn}
\label{def:factored}
Let $\pi = (\pi_1, \ldots , \pi_s)$ be a partition of $\CA$.
\begin{itemize}
\item[(a)]
$\pi$ is called \emph{independent}, provided 
for any choice $H_i \in \pi_i$ for $1 \le i \le s$,
the resulting $s$ hyperplanes are linearly independent, i.e.\
$r(H_1 \cap \ldots \cap H_s) = s$.
\item[(b)]
Let $X \in L(\CA)$.
The \emph{induced partition} $\pi_X$ of $\CA_X$ is given by the non-empty 
blocks of the form $\pi_i \cap \CA_X$.
\item[(c)]
$\pi$ is
\emph{nice} for $\CA$ or a \emph{factorization} of $\CA$  provided 
\begin{itemize}
\item[(i)] $\pi$ is independent, and 
\item[(ii)] for each $X \in L(\CA) \setminus \{V\}$, the induced partition $\pi_X$ admits a block 
which is a singleton. 
\end{itemize}
\end{itemize}
If $\CA$ admits a factorization, then we also say that $\CA$ is \emph{factored} or \emph{nice}.
\end{defn}

\begin{remark}
\label{rem:factored}
If $\CA$ is non-empty and   
$\pi$ is a nice partition of $\CA$, then the non-empty parts of the 
induced partition $\pi_X$ form a nice partition of $\CA_X$
for each $X \in L(\CA)\setminus\{V\}$;
cf.~the proof of \cite[Cor.\ 2.11]{terao:factored}.
\end{remark}

Following Jambu and Paris 
\cite{jambuparis:factored}, 
we introduce further notation.
Suppose $\CA$ is not empty. 
Let $\pi = (\pi_1, \ldots, \pi_s)$ be a partition of $\CA$.
Let $H_0 \in \pi_1$ and 
let $(\CA, \CA', \CA'')$ be the triple associated with $H_0$. 
Then $\pi$ induces a 
partition 
$\pi'$ of 
$\CA'$, i.e.\ the non-empty 
subsets $\pi_i \cap \CA'$.
Note that since $H_0 \in \pi_1$, we have
$\pi_i \cap \CA' = \pi_i$ 
for $i = 2, \ldots, s$. 
Also, associated with $\pi$ and $H_0$, we define 
the \emph{restriction map}
\[
\R := \R_{\pi,H_0} : \CA \setminus \pi_1 \to \CA''\ \text{ given by } \ H \mapsto H \cap H_0
\]
and set 
\[
\pi_i'' := \R(\pi_i) = \{H \cap H_0 \mid H \in \pi_i\} \
\text{ for }\  2 \le i \le s.
\]
In general, $\R$ need not be surjective nor injective.
However, since we are only concerned with cases when 
$\pi'' = (\pi_2'', \ldots, \pi_s'')$ is a
partition of $\CA''$,  
$\R$ has to be onto and 
$\R(\pi_i) \cap \R(\pi_j) = \varnothing$ for $i \ne j$.

The following gives an  
analogue of Terao's 
addition deletion Theorem \ref{thm:add-del} for 
free arrangements for the class of 
nice arrangements.

\begin{theorem}
[{\cite[Thm.\ 3.5]{hogeroehrle:factored}}]
\label{thm:add-del-factored}
Suppose that $\CA \ne \varnothing_\ell$.
Let $\pi = (\pi_1, \ldots, \pi_s)$ be a  partition  of $\CA$.
Let $H_0 \in \pi_1$ and 
let $(\CA, \CA', \CA'')$ be the triple associated with $H_0$. 
Then any two of the following statements imply the third:
\begin{itemize}
\item[(i)] $\pi$ is nice for $\CA$;
\item[(ii)] $\pi'$ is nice for $\CA'$;
\item[(iii)] $\R: \CA \setminus \pi_1 \to \CA''$ 
is bijective and $\pi''$ is nice for $\CA''$.
\end{itemize}
\end{theorem}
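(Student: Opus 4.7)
The plan is to verify each of the three implications separately by checking the two defining conditions of a nice partition---independence (Definition~\ref{def:factored}(c)(i)) and the singleton block requirement (Definition~\ref{def:factored}(c)(ii))---by means of two translation principles between $\pi$, $\pi'$, and $\pi''$ that I would establish at the outset.

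The first is a \emph{rank dictionary}: assuming $\R$ is bijective, a choice $H_i \in \pi_i$ for $2 \le i \le s$ corresponds to a choice $H_i \cap H_0 \in \pi_i''$, and the $s$ hyperplanes $H_0, H_2, \ldots, H_s$ are linearly independent in $V$ if and only if the $s-1$ restrictions $H_2 \cap H_0, \ldots, H_s \cap H_0$ are linearly independent inside $H_0$. Together with the observation that an independent choice from $\pi$ avoiding $H_0$ is the same as an independent choice from $\pi'$, this transports independence freely between $\pi$ and the pair $(\pi', \pi'')$. The second is a \emph{block dictionary} for the induced partitions: for $X \in L(\CA) \setminus \{V\}$, if $H_0 \not\supseteq X$ then $H_0 \notin \CA_X$, so $\CA_X \subseteq \CA'$, $X \in L(\CA')$, and $\pi_X = \pi'_X$; if $H_0 \supseteq X$ then $X$ belongs to $L(\CA'')$ (viewed inside $H_0$) and, when $\R$ is bijective, $H \mapsto H \cap H_0$ induces a bijection $\pi_i \cap \CA_X \leftrightarrow \pi_i'' \cap \CA''_X$ for every $i \ge 2$, while $\pi_1 \cap \CA_X$ always contains $H_0$ and has no counterpart in $\pi_X''$. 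For $i \ge 2$ one moreover has $\pi_i \cap \CA_X = \pi_i \cap \CA'_X$, so singletons in non-$\pi_1$ blocks travel freely between $\pi_X$ and $\pi'_X$.

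With these dictionaries each implication becomes a short case analysis. For (i)+(ii)$\Rightarrow$(iii) the first task is bijectivity of $\R$, after which independence and the singleton condition for $\pi''$ follow immediately from the dictionaries. For (i)+(iii)$\Rightarrow$(ii), independence of $\pi'$ is inherited from that of $\pi$; and for $X \in L(\CA') \setminus \{V\}$ one uses $\pi_X = \pi'_X$ when $H_0 \not\supseteq X$ and transports a singleton of $\pi_X''$ across $\R$ when $H_0 \supseteq X$. For (ii)+(iii)$\Rightarrow$(i), independence of $\pi$ follows from the rank dictionary, and for $X \in L(\CA) \setminus \{V\}$ one uses niceness of $\pi'_X$ when $H_0 \not\supseteq X$, while for $H_0 \supseteq X$ either the block $\pi_1 \cap \CA_X = \{H_0\}$ already provides a singleton of $\pi_X$, or a singleton of $\pi_X''$ transfers to one of $\pi_X$ by the block dictionary.

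The main obstacle, as I expect, is the injectivity of $\R$ in the first implication. Suppose $H_1 \ne H_2$ in $\CA \setminus \pi_1$ satisfy $H_1 \cap H_0 = H_2 \cap H_0 = X$. Independence of $\pi$ applied at $X$ forces $\pi_X$ to have at most two nonempty blocks, since all hyperplanes through $X$ have rank at most two in $V$. As $H_0, H_1, H_2 \in \CA_X$, the only way to avoid three blocks is $H_1, H_2 \in \pi_i$ for a common $i \ge 2$, and the singleton block required by niceness of $\pi$ at $X$ must then be $\pi_1 \cap \CA_X = \{H_0\}$. But $X = H_1 \cap H_2 \in L(\CA')$, and $\pi'_X$ now consists of the single nonempty block $\pi_i \cap \CA'_X = \pi_i \cap \CA_X \supseteq \{H_1, H_2\}$, which is not a singleton, contradicting niceness of $\pi'$. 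This is the only step of the argument that genuinely needs both $\pi$ and $\pi'$ in tandem; surjectivity of $\R$ and the remaining verifications amount to careful bookkeeping with the two dictionaries.
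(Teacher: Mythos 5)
Your overall architecture is sound, and the key obstacle you single out --- injectivity of $\R$ in (i)$+$(ii)$\Rightarrow$(iii) --- is handled correctly: localizing at the rank-$2$ element $X=H_1\cap H_0=H_2\cap H_0$, using independence of $\pi$ to force at most two blocks, using niceness of $\pi$ to force $\pi_1\cap\CA_X=\{H_0\}$, and then contradicting niceness of $\pi'$ at $X\in L(\CA')$ is exactly the right argument.

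However, there is a real gap in your claim that, once $\R$ is bijective, ``independence and the singleton condition for $\pi''$ follow immediately from the dictionaries.'' For $X\in L(\CA'')\setminus\{H_0\}$, niceness of $\pi$ gives a singleton block of $\pi_X$; if that singleton sits in $\pi_i$ with $i\ge 2$ the block dictionary does finish, but it may be that the \emph{only} singleton of $\pi_X$ is $\pi_1\cap\CA_X=\{H_0\}$. To rule this out you want to invoke niceness of $\pi'$, but $X$ need not lie in $L(\CA')$: since $X\subseteq H_0$ one typically has $\bigcap_{H\in\CA'_X}H\supsetneq X$. The fix is to pass to $X':=\bigcap_{H\in\CA'_X}H\in L(\CA')\setminus\{V\}$, note that $\CA'_{X'}=\CA'_X$ so $\pi'_{X'}$ has exactly the blocks $\pi_i\cap\CA_X$ for $i\ge 2$, and then niceness of $\pi'$ at $X'$ forces one of these to be a singleton, contradicting the assumption. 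This replacement of $X$ by its closure $X'$ in $L(\CA')$ is an actual missing idea in your plan, not bookkeeping. A smaller but similar omission: surjectivity of $\R$ is not a dictionary fact either. Given $H\in\pi_1\setminus\{H_0\}$, you must apply niceness of $\pi$ at the rank-$2$ element $Y=H\cap H_0$; since $\pi_1\cap\CA_Y$ contains both $H_0$ and $H$ it cannot be the singleton, so the singleton lies in some $\pi_i$ with $i\ge 2$ and its unique member $K$ satisfies $K\cap H_0=Y$. With these two repairs your three implications go through, and the remaining verifications genuinely are the routine translations you describe.
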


The bijectivity condition on $\R$ 
in the theorem is necessary, 
cf.~\cite[Ex.\ 3.3]{hogeroehrle:factored}.
Theorem \ref{thm:add-del-factored} 
motivates
the following stronger notion of factorization, 
cf.\ \cite{jambuparis:factored}, \cite[Def.\ 3.8]{hogeroehrle:factored}.

\begin{defn} 
\label{def:indfactored}
The class $\CIFAC$ of \emph{inductively factored} arrangements 
is the smallest class of pairs $(\CA, \pi)$ of 
arrangements $\CA$ together with a partition $\pi$
subject to
\begin{itemize}
\item[(i)] $(\varnothing_\ell, ( ))$ belongs to $\CIFAC$ for each $\ell \ge 0$;
\item[(ii)] if there exists a partition $\pi$ of $\CA$ 
and a hyperplane $H_0 \in \pi_1$ such that 
for the triple $(\CA, \CA', \CA'')$ associated with $H_0$ 
the restriction map $\R = \R_{\pi, H_0} : \CA \setminus \pi_1 \to \CA''$ 
is bijective and for the induced partitions $\pi'$ of $\CA'$ and $\pi''$ of $\CA''$ 
both $(\CA', \pi')$ and $(\CA'', \pi'')$ belong to $\CIFAC$, 
then $(\CA, \pi)$ also belongs to $\CIFAC$.
\end{itemize}
If $(\CA, \pi)$ is in $\CIFAC$, then we say that
$\CA$ is \emph{inductively factored with respect to $\pi$}, or else
that $\pi$ is an \emph{inductive factorization} of $\CA$. 
Sometimes, we simply say $\CA$ is \emph{inductively factored} without 
reference to a specific inductive factorization of $\CA$.
\end{defn}

The connection with the previous notions is as follows, \cite[Prop.\ 3.11]{hogeroehrle:factored}.

\begin{proposition}
\label{prop:superindfactored}
If $\CA$ is supersolvable, then $\CA$ is inductively factored.
\end{proposition}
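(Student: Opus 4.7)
I would argue by induction on $|\CA|$, showing that the canonical partition $\pi = (\pi_1,\ldots,\pi_r)$ associated with a maximal modular chain $V = X_0 < X_1 < \ldots < X_r = T(\CA)$, defined by $\pi_i := \CA_{X_i} \setminus \CA_{X_{i-1}}$, is an inductive factorization of $\CA$. The key observation driving the induction is that Lemma \ref{lem:modular2} supplies exactly the bijectivity of the restriction map $\R$ required by Definition \ref{def:indfactored}(ii), via the modular element of rank $r-1$ at the top of the chain.

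If $\CA = \varnothing_\ell$, the assertion is Definition \ref{def:indfactored}(i). Otherwise $r(\CA) = r \ge 1$, and by Theorem \ref{thm:super} the subarrangement $\CA_0 := \CA_{X_{r-1}}$ is supersolvable of rank $r-1$, with canonical partition $(\pi_1,\ldots,\pi_{r-1})$, while $\pi_r = \CA\setminus \CA_0$ is nonempty. (After the obvious relabeling, $\pi_r$ plays the role of the first block $\pi_1$ in Definition \ref{def:indfactored}.) Fix $H_0 \in \pi_r$ and consider the triple $(\CA, \CA', \CA'')$. By Lemma \ref{lem:modular2}, the assignment $Y \mapsto Y \cap H_0$ is a lattice isomorphism $L(\CA_0) \to L(\CA^{H_0})$; in particular, at the level of hyperplanes it restricts to a bijection $\R : \CA_0 \to \CA^{H_0} = \CA''$. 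This transports the modular chain of $\CA_0$ to one in $\CA''$, identifies $\CA_{0,X_i}$ with $\CA''_{X_i \cap H_0}$, and thus equates the canonical partition of $\CA''$ with the induced partition $\pi'' = (\R(\pi_1),\ldots,\R(\pi_{r-1}))$. On the other hand, an application of Lemma \ref{lem:modular1} shows that $X_{r-1}$ remains modular of rank $r-1$ in $L(\CA')$: for any distinct $H_1, H_2 \in \pi_r \setminus \{H_0\}$, the hyperplane $H_3 \in \CA_0$ certifying dependence in $L(\CA)$ still lies in $\CA'$. Hence $\CA'$ is supersolvable with canonical partition $\pi' = (\pi_1,\ldots,\pi_{r-1},\pi_r\setminus\{H_0\})$ (or simply $(\pi_1,\ldots,\pi_{r-1})$ when $\pi_r = \{H_0\}$ and $\CA' = \CA_0$).

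Since $|\CA'|, |\CA''| < |\CA|$, the inductive hypothesis yields $(\CA',\pi') \in \CIFAC$ and $(\CA'',\pi'') \in \CIFAC$, and the bijectivity of $\R_{\pi, H_0} : \CA \setminus \pi_r \to \CA^{H_0}$ is precisely the bijection provided by Lemma \ref{lem:modular2}. Invoking Definition \ref{def:indfactored}(ii) completes the inductive step. The principal technical point to verify carefully is that the lattice isomorphism of Lemma \ref{lem:modular2} genuinely carries the canonical partition of $\CA_0$ onto the canonical partition of $\CA''$; this reduces to the observation that the isomorphism is order-preserving and sends $X_i \in L(\CA_0)$ to $X_i \cap H_0 \in L(\CA'')$, so the chain-defined blocks correspond exactly. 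A minor bookkeeping subtlety is the relabeling needed to have the chosen block appear first in the partition, but this is harmless as niceness and inductive factoredness depend only on the partition as an unordered family together with a distinguished block at each step.
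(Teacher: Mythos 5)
The paper itself supplies no proof of Proposition \ref{prop:superindfactored}: it is quoted verbatim from \cite[Prop.\ 3.11]{hogeroehrle:factored}, so there is no in-text argument to compare yours against. Your proof is correct in substance and is the natural one: take the top modular coatom $X_{r-1}$, put $\CA_0 := \CA_{X_{r-1}}$, choose $H_0 \in \CA \setminus \CA_0$, and verify the recursion in Definition \ref{def:indfactored}(ii) via the lattice isomorphism of Lemma \ref{lem:modular2}. The one place you move too quickly is the sentence ``Hence $\CA'$ is supersolvable with canonical partition $\pi'=(\pi_1,\ldots,\pi_{r-1},\pi_r\setminus\{H_0\})$.'' Showing that $X_{r-1}$ stays modular of rank $r-1$ in $L(\CA')$ gives supersolvability of $\CA'$ by Corollary \ref{cor:modular-super}, but to invoke the induction hypothesis on $(\CA',\pi')$ you also need $\pi'$ to arise from a maximal modular chain of $L(\CA')$, and hence that the intermediate elements $X_1,\ldots,X_{r-2}$ --- which you only know to be modular in the interval below $X_{r-1}$, i.e.\ in $L(\CA_0)$ --- remain modular in all of $L(\CA')$. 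This transfer of modularity through a modular coatom is a standard fact about geometric lattices and is exactly what underlies the forward direction of Theorem \ref{thm:super}, cf.\ \cite[Thm.\ 4.3]{bjoerneredelmanziegler}; so the omission is bibliographic rather than mathematical, but you should flag it, since it is not the technical point you chose to highlight.

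A cleaner route within this paper's own toolkit: by Proposition \ref{prop:superlocal}, $\CA_0=\CA_{X_{r-1}}$ is supersolvable of smaller cardinality, so by induction it is inductively factored; since $X_{r-1}$ is modular of rank $r-1$, Lemma \ref{lem:modular-indfactored} (whose proof in Table \ref{indtable:modular2} does not use the present proposition) then yields directly that $\CA$ is inductively factored, with no need to identify the induced partition on $\CA'$ as a canonical one. Your one-hyperplane-at-a-time deletion is logically equivalent to that lemma's add-an-entire-block induction table, but the latter packages away the modular-chain bookkeeping that your argument currently glosses over.
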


\begin{proposition}
[{\cite[Prop.\ 2.2]{jambuparis:factored}, 
\cite[Prop.\ 3.14]{hogeroehrle:factored}}]
\label{prop:indfactoredindfree}
Let $\pi = (\pi_1, \ldots, \pi_r)$ be an inductive factorization of $\CA$. 
Then $\CA$ is inductively free with
$\exp \CA = \{0^{\ell-r}, |\pi_1|, \ldots, |\pi_r|\}$.
\end{proposition}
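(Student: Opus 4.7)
The plan is to prove this by induction on the structure of $(\CA, \pi)$ in the class $\CIFAC$. The base case $(\varnothing_\ell, (\,))$ is trivial: the empty arrangement is inductively free with exponents $\{0^\ell\}$, matching the claimed formula since $r = 0$.

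For the inductive step, suppose $(\CA, \pi) \in \CIFAC$ is witnessed by a hyperplane $H_0 \in \pi_1$ such that, for the triple $(\CA, \CA', \CA'')$ associated with $H_0$, the restriction map $\R = \R_{\pi, H_0}$ is bijective and $(\CA', \pi'), (\CA'', \pi'') \in \CIFAC$. By the inductive hypothesis, both $\CA'$ and $\CA''$ are inductively free, and their exponents are determined by the block sizes of $\pi'$ and $\pi''$ respectively. First I would compute $\exp \CA''$: since $\R$ is a bijection onto $\CA''$, we have $|\pi_i''| = |\pi_i|$ for $i \ge 2$, and $\pi''$ has $r-1$ blocks partitioning an arrangement in an $(\ell-1)$-dimensional space, giving
\[
\exp \CA'' = \{0^{\ell-r}, |\pi_2|, \ldots, |\pi_r|\}.
\]
Then I would compute $\exp \CA'$, splitting into two cases according to whether $|\pi_1| = 1$ or $|\pi_1| > 1$. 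If $|\pi_1| > 1$, then $\pi'$ still has $r$ blocks with $|\pi_1 \cap \CA'| = |\pi_1| - 1$, yielding
\[
\exp \CA' = \{0^{\ell-r}, |\pi_1|-1, |\pi_2|, \ldots, |\pi_r|\}.
\]
If $|\pi_1| = 1$, then $\pi'$ has only $r-1$ blocks $(\pi_2,\ldots,\pi_r)$, and the inductive hypothesis gives $\exp \CA' = \{0^{\ell-r+1}, |\pi_2|, \ldots, |\pi_r|\}$, which coincides with the above formula under the convention $|\pi_1|-1 = 0$.

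In both cases, a direct multiset comparison shows $\exp \CA'' \subseteq \exp \CA'$, with the single missing element being $|\pi_1|$. Applying the addition part of Terao's Theorem \ref{thm:add-del} to the triple $(\CA, \CA', \CA'')$ (with $b_\ell = |\pi_1|$) yields that $\CA$ is free with
\[
\exp \CA = \{0^{\ell-r}, |\pi_1|, |\pi_2|, \ldots, |\pi_r|\},
\]
as required. Finally, because $\CA', \CA'' \in \CIF$ by induction and the exponent containment $\exp \CA'' \subseteq \exp \CA'$ holds, clause (ii) of Definition \ref{def:indfree} applied to $H_0$ upgrades this to inductive freeness of $\CA$, completing the induction.

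The only delicate point, and the main thing to track carefully, is the bookkeeping of block sizes and exponent multisets in the case $|\pi_1| = 1$, where the partition $\pi'$ loses a block entirely and the count of zero exponents of $\CA'$ increases by one. This is genuinely a minor obstacle rather than a conceptual one: the bijectivity of $\R$ built into the definition of $\CIFAC$ is precisely what makes the restriction behave compatibly with the addition-deletion theorem, so the induction propagates cleanly once the two cases are handled uniformly.
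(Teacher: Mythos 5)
Your proof is correct and takes the standard approach: structural induction over the recursive definition of $\CIFAC$, computing $\exp\CA'$ and $\exp\CA''$ from the block sizes of the induced partitions and then invoking the addition parts of Theorem \ref{thm:add-del} and Definition \ref{def:indfree}(ii), exactly as described in Remark \ref{rem:indfactable} (the paper itself cites the proof to \cite{jambuparis:factored} and \cite{hogeroehrle:factored} rather than reproducing it). The bookkeeping in the two subcases $|\pi_1|=1$ versus $|\pi_1|>1$ is handled correctly and the multiset containment $\exp\CA''\subseteq\exp\CA'$ is verified as needed.
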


\begin{remark}
\label{rem:indfactable}
In analogy to $\CIF$, for members in $\CIFAC$ 
one can present a so called induction table of factorizations, 
cf.~\cite[Rem.\ 3.16]{hogeroehrle:factored}.

The proof of Proposition \ref{prop:indfactoredindfree} 
shows 
that if $\pi$ is an inductive factorization of $\CA$
and $H_0 \in \CA$ is distinguished with respect to $\pi$, then 
the triple $(\CA, \CA', \CA'')$ with respect to $H_0$ 
is a triple of inductively free arrangements.
Thus an induction table of $\CA$ can be constructed,
compatible with suitable inductive factorizations of 
the subarrangements $\CA_i$.

Let $\CA = \{H_1, \ldots, H_m\}$ be a 
choice of a total order on $\CA$.
Then, starting with the empty partition for 
$\varnothing_\ell$, we can attempt to
build inductive factorizations $\pi_i$ of $\CA_i$
consecutively, resulting in an inductive factorization 
$\pi = \pi_m$ of $\CA = \CA_m$.
This is achieved by invoking Theorem \ref{thm:add-del-factored}
repeatedly in order to derive that each $\pi_i$ is an inductive factorization of $\CA_i$.

We then add the inductive factorizations $\pi_i$
of $\CA_i$ as additional data into an induction table for $\CA$.
The data in such an extended induction table 
together with the addition part of Theorem \ref{thm:add-del-factored}
then proves that $\CA$ is inductively factored. 
We refer to this technique as \emph{induction of factorizations} and the
corresponding table as an \emph{induction table of factorizations} for $\CA$.
See for instance \cite[\S 3.3]{hogeroehrle:factored}, or Table  \ref{indtable:modular2}.
\end{remark}

By Remark \ref{rem:factored}, the class of nice arrangements
is closed with respect to taking localizations.
This property restricts to the class $\CIFAC$, \cite[Thm.\ 1.1]{muellerroehrle:factored}.

\begin{proposition}
\label{prop:indfaclocal}
If $\CA$ is inductively factored,
then so is $\CA_U$ for every subspace $U$ in $V$.
\end{proposition}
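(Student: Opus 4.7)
The plan is to proceed by induction on $|\CA|$, proving the stronger statement that if $(\CA, \pi) \in \CIFAC$, then $(\CA_U, \pi_U) \in \CIFAC$ for every subspace $U \subseteq V$, where $\pi_U$ denotes the partition of $\CA_U$ whose non-empty blocks are $\pi_i \cap \CA_U$. The base case $\CA = \varnothing_\ell$ is immediate from Definition \ref{def:indfactored}(i). For the inductive step, fix a distinguished hyperplane $H_0 \in \pi_1$ witnessing that $(\CA, \pi) \in \CIFAC$ via Definition \ref{def:indfactored}(ii), so that the restriction map $\R = \R_{\pi, H_0} : \CA \setminus \pi_1 \to \CA''$ is bijective and both $(\CA', \pi')$ and $(\CA'', \pi'')$ lie in $\CIFAC$. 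The argument splits on whether $U \subseteq H_0$.

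If $U \not\subseteq H_0$, then $H_0 \notin \CA_U$, so a direct check gives $\CA_U = (\CA')_U$ and $\pi_U = (\pi')_U$; since $|\CA'| < |\CA|$, the induction hypothesis applied to $(\CA', \pi')$ yields $(\CA_U, \pi_U) \in \CIFAC$. If instead $U \subseteq H_0$, then $H_0$ lies in $(\pi_U)_1 = \pi_1 \cap \CA_U$, and I identify
\[
(\CA_U)' = \CA_U \setminus \{H_0\} = (\CA')_U, \qquad (\CA_U)^{H_0} = (\CA'')_U,
\]
together with the matching equalities $(\pi_U)' = (\pi')_U$ and $(\pi_U)'' = (\pi'')_U$ for the induced partitions; the inclusion $U \subseteq H_0$ is used crucially here to transport the restriction operation through the localization. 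The induction hypothesis applied to $(\CA', \pi')$ and $(\CA'', \pi'')$ then delivers that both $((\CA')_U, (\pi')_U)$ and $((\CA'')_U, (\pi'')_U)$ belong to $\CIFAC$.

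To close the induction via Definition \ref{def:indfactored}(ii) applied to $\CA_U$ with distinguished hyperplane $H_0 \in (\pi_U)_1$, it remains to verify that
\[
\R_U : \CA_U \setminus (\pi_U)_1 \longrightarrow (\CA_U)^{H_0} = (\CA'')_U, \qquad H \mapsto H \cap H_0,
\]
is a bijection. Injectivity is inherited from that of $\R$; for surjectivity, a given $K \in (\CA'')_U$ has a unique preimage $H \in \CA \setminus \pi_1$ under $\R$, and the chain $U \subseteq K \subseteq H$ forces $H \in \CA_U \setminus (\pi_U)_1$ with $\R_U(H) = K$. The main (and only slightly delicate) obstacle is this bookkeeping: one must verify that the induced partition $\pi_U$ is the correct candidate on $\CA_U$, that its deletion and restriction at $H_0$ genuinely match $(\pi')_U$ and $(\pi'')_U$, and that the bijectivity of the restriction map is preserved under localization. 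Once these identifications are in place, a single invocation of Definition \ref{def:indfactored}(ii) finishes the induction.
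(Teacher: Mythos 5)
Your proof is correct. Since the paper does not give its own argument for this proposition (it is cited to the external reference [MR17], which was unpublished at the time of writing), there is no in-text proof to compare against; however, your induction on $|\CA|$ tracking the pair $(\CA,\pi)$ and mirroring Definition~\ref{def:indfactored} is the natural route, and all the key verifications check out. In particular, the splitting on $U \subseteq H_0$ versus $U \not\subseteq H_0$ is exactly the right dichotomy: when $U \not\subseteq H_0$ the deletion absorbs $H_0$ so $\CA_U = (\CA')_U$ with matching induced partitions; when $U \subseteq H_0$, the crucial identification $(\CA_U)^{H_0} = (\CA'')_U$ holds because $U \subseteq H \cap H_0$ iff $U \subseteq H$ under the hypothesis $U \subseteq H_0$, and the surjectivity of $\R_U$ is correctly derived from the bijectivity of $\R$ together with the chain $U \subseteq K \subseteq H$ for the unique $\R$-preimage $H$ of $K \in (\CA'')_U$. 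The block-by-block checks $(\pi_U)' = (\pi')_U$ and $(\pi_U)'' = (\pi'')_U$ (discarding empty blocks) are the ``bookkeeping'' you flag, and they go through without difficulty; one point worth being explicit about is that for $i \ge 2$ the injectivity of $\R$ on $\pi_i \subseteq \CA \setminus \pi_1$ guarantees that $\R(\pi_i) \cap (\CA'')_U = \R_U(\pi_i \cap \CA_U)$, so nothing is lost or duplicated. A single invocation of Definition~\ref{def:indfactored}(ii) then closes the induction, as you say.
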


The following gives a converse to Proposition \ref{prop:indfaclocal}
for $X$ in $L(\CA)$  modular of rank $r-1$.

\begin{lemma}
\label{lem:modular-indfactored}
Let $\CA$ be an arrangement of rank $r$.
Suppose that $X \in L(\CA)$ is modular of rank $r-1$.
If $\CA_X$ is inductively factored, then so is $\CA$.
In particular, if $\exp \CA_X = \{0, e_1, \ldots, e_{\ell-1}\}$,
then $\exp \CA = \{e_1, \ldots, e_{\ell-1}, e_\ell\}$,
where $e_\ell := | \CA \setminus \CA_X|$.
\end{lemma}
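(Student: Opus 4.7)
The plan is to mirror the argument for Lemma \ref{lem:modular-indfree}, replacing inductive freeness with inductive factoredness and invoking Theorem \ref{thm:add-del-factored} in place of Theorem \ref{thm:add-del}. Fix an inductive factorization $\pi^X = (\pi_1^X, \ldots, \pi_s^X)$ of $\CA_X$, order $\CA \setminus \CA_X = \{H_1, \ldots, H_{e_\ell}\}$ arbitrarily, and set $\CA_i := \CA_X \cup \{H_1, \ldots, H_i\}$ for $0 \le i \le e_\ell$. As a candidate factorization of $\CA_i$, declare
\[
\pi^i := \bigl(\pi_1^X, \ldots, \pi_s^X, \{H_1, \ldots, H_i\}\bigr),
\]
with the convention that for $i = 0$ the final block is omitted, so that $\pi^0 = \pi^X$. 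The goal is to show by induction on $i$ that $(\CA_i, \pi^i)$ is inductively factored; the case $i = e_\ell$ then yields the lemma.

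For the inductive step, distinguish $H_0 := H_i$ in the final block of $\pi^i$ and consider the triple $(\CA_i, \CA_i', \CA_i'') = (\CA_i, \CA_{i-1}, \CA_i^{H_i})$. The induced partition on $\CA_{i-1}$ is precisely $\pi^{i-1}$, inductively factored by the induction hypothesis, so condition (ii) of Theorem \ref{thm:add-del-factored} is satisfied. As in the proof of Lemma \ref{lem:modular-indfree}, $X$ persists as a modular element of rank $r-1$ in $L(\CA_i)$ with $(\CA_i)_X = \CA_X$; hence Lemma \ref{lem:modular2} furnishes a lattice isomorphism $L(\CA_X) \to L(\CA_i^{H_i})$ sending $Y$ to $Y \cap H_i$. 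Restricted to atoms this is exactly the restriction map
\[
\R : \CA_i \setminus \{H_1, \ldots, H_i\} = \CA_X \longrightarrow \CA_i^{H_i},
\]
which is therefore a bijection and carries $\pi^X$ onto the induced partition $\pi^{i,''} = (\R(\pi_1^X), \ldots, \R(\pi_s^X))$. Because the inductive factorization property transports along lattice isomorphisms, $\pi^{i,''}$ is an inductive factorization of $\CA_i^{H_i}$, establishing condition (iii). Theorem \ref{thm:add-del-factored} then gives that $\pi^i$ is an inductive factorization of $\CA_i$.

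Setting $i = e_\ell$ shows that $\CA$ is inductively factored with respect to $\pi^{e_\ell} = (\pi_1^X, \ldots, \pi_s^X, \CA \setminus \CA_X)$. Proposition \ref{prop:indfactoredindfree} then yields the exponents: the block sizes of $\pi^X$ account for the non-zero part of $\exp \CA_X = \{0, e_1, \ldots, e_{\ell-1}\}$, and appending the extra block of cardinality $e_\ell$ replaces the distinguished zero, giving $\exp \CA = \{e_1, \ldots, e_{\ell-1}, e_\ell\}$.

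The only real obstacle is verifying bijectivity of $\R$ at each stage, and this is handled uniformly: the modularity of $X$ of rank $r-1$ survives in every $\CA_i$ by the same argument referenced in the proof of Lemma \ref{lem:modular-indfree}, and Lemma \ref{lem:modular2} then presents $\R$ as the atom-level incarnation of a lattice isomorphism, which automatically transports the nice partition $\pi^X$ to a nice partition on the restriction.
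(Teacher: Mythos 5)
Your proof is correct and follows essentially the same route as the paper's: you build up $\CA$ from $\CA_X$ one hyperplane at a time, observe (via the persistence of modularity of $X$ and Lemma \ref{lem:modular2}) that each restriction $\CA_i^{H_i}$ together with its induced partition is isomorphic to $(\CA_X, \pi^X)$, and close the induction with Theorem \ref{thm:add-del-factored}. The paper packages this as an ``induction table of factorizations'' while you write the induction on $i$ explicitly, but the argument is the same.
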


\begin{proof}
Let $\pi_X$ be an inductive factorization of $\CA_X$
and let $\exp \CA_X = \{0, e_1, \ldots, e_{\ell-1}\}$.
Let $\pi^X := \CA \setminus \CA_X = \{H_1, \ldots, H_{e_\ell}\}$ (given in any fixed order).
Then $\pi := (\pi_X, \pi^X)$ is a partition of $\CA$.
We show that $\pi$ is an inductive factorization
of $\CA$ by means of the following induction table of factorizations
starting with the inductive factorization $\pi_X$ of $\CA_X$,
see Remark \ref{rem:indfactable}.

\begin{table}[ht!b]
\renewcommand{\arraystretch}{1.5}
\begin{tabular}{lllll}\hline
$\pi_i'$ &  $\exp\CA_i'$ & ${H_i}$ & $\pi_i''$ & $\exp\CA_i''$\\ 
\hline\hline
$\pi_X$ &  $e_1, \ldots, e_{\ell-1}, 0$ &  $H_1$ & $\overline {\pi_X}$ & $e_1, \ldots, e_{\ell-1}$ \\
$\pi_X,\{H_1\}$ &  $e_1, \ldots, e_{\ell-1}, 1$ &  $H_2$ & $\overline {\pi_X}$ & $e_1 , \ldots, e_{\ell-1}$ \\
$\pi_X,\{H_1, H_2\}$ &  $e_1, \ldots, e_{\ell-1}, 2$ &  $H_3$ & $\overline {\pi_X}$ & $e_1, \ldots, e_{\ell-1}$ \\
\vdots & \vdots & \vdots & \vdots & \vdots \\ 
$\pi_X, \{H_1, H_2, \ldots, H_{e_\ell-1}\}$ &  $e_1, \ldots, e_{\ell-1}, e_\ell-1$ &  $H_{e_\ell}$ & $\overline {\pi_X}$ & $e_1, \ldots, e_{\ell-1}$ \\
$\pi = (\pi_X, \pi^X)$ &  $e_1, \ldots, e_{\ell-1}, e_\ell$ \\
\hline
\end{tabular}
\smallskip
\caption{Induction table of factorizations for $\CA$ starting at $\CA_X$}
\label{indtable:modular2}
\end{table}

It follows from Lemma \ref{lem:modular2}
applied to the consecutive triples in this induction table
that each restriction in Table \ref{indtable:modular2}
is isomorphic to $\CA_X$ and 
the induced partition $\pi_i'' = \overline {\pi_X}$ 
is in bijection with $\pi_X$. 
Thus each restriction along with the induced partition in this table  
is isomorphic to the pair $(\CA_X, \pi_X)$, and thus is 
inductively factored.
Consequently, it follows from Table \ref{indtable:modular2},
Theorem \ref{thm:add-del-factored} 
and Remark \ref{rem:indfactable} that
$\pi = (\pi_X, \pi^X)$ is an inductive factorization of $\CA$.
Observe, the notation here is consistent with the 
one introduced in Definition \ref{def:factored}(b), for  
the partition of $\CA_X$ induced from $\pi$ is just 
$\pi_X$, the one we started with.
\end{proof}

\begin{remark}
\label{rem:modular-nice}
The same argument in 
the proof of Lemma \ref{lem:modular-indfactored}
shows that if $X \in L(\CA)$ is modular of rank $r-1$
and $\CA_X$ is nice, then so is $\CA$
(albeit without any reference to exponents,  
as $\CA_X$ might not be free).
\end{remark}

As for the previous stronger freeness properties, 
inductively factored arrangements are 
compatible with the  
product construction for arrangements.

\begin{proposition}
[{\cite[Prop.\ 3.30]{hogeroehrle:factored}}]
\label{prop:product-indfactored}
Let $\CA_1, \CA_2$ be two arrangements.
Then  $\CA = \CA_1 \times \CA_2$ is inductively factored
if and only if both 
$\CA_1$ and $\CA_2$ are inductively factored.
\end{proposition}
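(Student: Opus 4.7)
The plan is to prove the two directions separately. The forward direction (inductive factoredness of the product implies that of each factor) follows directly from the localization stability result Proposition~\ref{prop:indfaclocal}. Indeed, take $U := V_2 \subseteq V_1 \oplus V_2$; the localization $(\CA_1 \times \CA_2)_{V_2}$ consists exactly of those hyperplanes containing $V_2$, and since $V_2 \subseteq V_1 \oplus K$ would force $V_2 \subseteq K$, which is impossible for a proper hyperplane $K$ of $V_2$, we see $(\CA_1 \times \CA_2)_{V_2} = \{H \oplus V_2 \mid H \in \CA_1\} \cong \CA_1$. Proposition~\ref{prop:indfaclocal} then gives that $\CA_1$ is inductively factored, and symmetrically for $\CA_2$.

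For the reverse direction, given inductive factorizations $\pi^{(1)} = (\pi^{(1)}_1, \ldots, \pi^{(1)}_s)$ of $\CA_1$ and $\pi^{(2)} = (\pi^{(2)}_1, \ldots, \pi^{(2)}_t)$ of $\CA_2$, the natural candidate partition of $\CA := \CA_1 \times \CA_2$ is obtained by embedding both factorizations into the product: set $\widetilde{\pi}^{(1)}_i := \{H \oplus V_2 \mid H \in \pi^{(1)}_i\}$, $\widetilde{\pi}^{(2)}_j := \{V_1 \oplus K \mid K \in \pi^{(2)}_j\}$, and $\pi := (\widetilde{\pi}^{(1)}_1, \ldots, \widetilde{\pi}^{(1)}_s, \widetilde{\pi}^{(2)}_1, \ldots, \widetilde{\pi}^{(2)}_t)$. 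I would argue by induction on $|\CA_1| + |\CA_2|$ that $(\CA, \pi)$ belongs to $\CIFAC$. The base case, one factor empty, is immediate since then $\CA_1 \times \CA_2$ is isomorphic to the nonempty factor (placed in a larger ambient space) and the induced partition coincides.

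For the inductive step, choose a distinguished hyperplane $H_0 \in \pi^{(1)}_1$ with respect to $\pi^{(1)}$; then $H_0 \oplus V_2 \in \widetilde{\pi}^{(1)}_1$ is a candidate distinguished hyperplane for $\pi$. A direct inspection of intersections (splitting $K \in \CA \setminus \{H_0 \oplus V_2\}$ according to whether it is of the form $H \oplus V_2$ or $V_1 \oplus K'$) shows that the associated triple decomposes as $\CA' = \CA_1' \times \CA_2$ and $\CA'' \cong \CA_1^{H_0} \times \CA_2$. Moreover, the induced partitions $\pi'$ on $\CA'$ and $\pi''$ on $\CA''$ are precisely the analogous product partitions formed from $\pi^{(1)\prime}$, respectively $\pi^{(1)\prime\prime}$, together with $\pi^{(2)}$. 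Both pairs thus satisfy the induction hypothesis, hence belong to $\CIFAC$. An application of Theorem~\ref{thm:add-del-factored} then delivers $(\CA, \pi) \in \CIFAC$, provided the restriction map $\R = \R_{\pi, H_0 \oplus V_2} : \CA \setminus \widetilde{\pi}^{(1)}_1 \to \CA''$ is bijective.

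The main obstacle is precisely this bijectivity check. The domain $\CA \setminus \widetilde{\pi}^{(1)}_1$ splits into two types: hyperplanes $H \oplus V_2$ with $H \in \CA_1 \setminus \pi^{(1)}_1$, on which $\R$ acts as $(H \oplus V_2) \mapsto (H \cap H_0) \oplus V_2$, mirroring $\R_{\pi^{(1)}, H_0}$ and hence bijective by the inductive factorization hypothesis on $\CA_1$; and hyperplanes $V_1 \oplus K$ with $K \in \CA_2$, on which $\R$ sends $V_1 \oplus K \mapsto H_0 \oplus K$ and is obviously injective with image parametrized by $\CA_2$. The two images are disjoint inside $\CA'' \cong \CA_1^{H_0} \times \CA_2$ because those in the first family have second coordinate equal to all of $V_2$, whereas those in the second have a proper hyperplane of $V_2$ as second coordinate. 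Once this combinatorial bookkeeping is in place, the induction closes cleanly via Theorem~\ref{thm:add-del-factored} and yields the claim; the exponent statement then follows from Proposition~\ref{prop:indfactoredindfree}.
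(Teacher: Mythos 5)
The paper itself offers no proof of this proposition---it is imported verbatim from \cite[Prop.\ 3.30]{hogeroehrle:factored}---so there is no in-paper argument to compare against. Your blind proof, however, is essentially correct and follows the natural inductive strategy one would expect the cited source to use: the forward direction via the localization stability of $\CIFAC$ (Proposition~\ref{prop:indfaclocal}), identifying $(\CA_1\times\CA_2)_{V_2}$ with $\CA_1\times\varnothing_{\dim V_2}$, and the reverse direction by concatenating the two inductive factorizations and inducting on $|\CA_1|+|\CA_2|$. Your decomposition $\CA'=\CA_1'\times\CA_2$ and $\CA''\cong\CA_1^{H_0}\times\CA_2$ is correct, the induced partitions do match the product partitions built from $\pi^{(1)\prime}$, $\pi^{(1)\prime\prime}$ and $\pi^{(2)}$, and your bijectivity check for $\R$---splitting the domain into the two types of hyperplanes, using bijectivity of $\R_{\pi^{(1)},H_0}$ on the first and the obvious bijection on the second, and observing the images are disjoint in $\CA_1^{H_0}\times\CA_2$---is sound.

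Two small points of precision. First, in the forward direction the identification of $(\CA_1\times\CA_2)_{V_2}$ with $\CA_1$ tacitly uses that adding trivial ambient coordinates (i.e.\ passing between $\CA_1$ and $\CA_1\times\varnothing_k$) does not affect membership in $\CIFAC$; this is indeed elementary since the lattices, triples and partitions correspond, but it should be stated rather than left implicit, especially since it is a degenerate case of the very statement you are proving. Second, in closing the inductive step, what you actually need is Definition~\ref{def:indfactored}(ii) directly: once $\R$ is bijective and $(\CA',\pi')$, $(\CA'',\pi'')$ are in $\CIFAC$ by the induction hypothesis, the defining recursion immediately places $(\CA,\pi)$ in $\CIFAC$. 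Theorem~\ref{thm:add-del-factored} concerns nice partitions rather than the class $\CIFAC$ and is not the right thing to cite at that point, though it causes no logical error since the conclusion is weaker.
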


\begin{remark}
\label{rem:modular-failure}
Observe that the implications in Lemmas 
\ref{lem:modular-indfree}, 
\ref{lem:modular-indfactored} 
and Corollary \ref{cor:modular-super}
fail without
the modularity requirement
on $X$. For let $\CA$
be a non-free $3$-arrangement and let
$X \in L(\CA)$ be of rank $2$.
Then being supersolvable, 
$\CA_X$ satisfies each of the 
stronger freeness properties, but 
$\CA$ does not.
\end{remark}

\subsection{Supersolvable Weyl arrangements}
\label{ssect:weyl}

In \cite[Cor.\ 5.15]{barakatcuntz:indfree}, 
Barakat and Cuntz 
showed that every Weyl arrangement is inductively free.
In \cite[Thm.\ 1.2]{hogeroehrle:super} and 
\cite[Cor.\ 1.4]{hogeroehrle:nice}, 
all supersolvable, respectively
inductively factored, reflection arrangements
were classified.
It is going to be useful to know all instances 
when a Weyl arrangement itself 
satisfies any of these stronger freeness properties.
It turns out that for a
Weyl arrangement $\CA = \CA(\Phi)$, 
supersolvability, niceness and inductive factoredness
all coincide.
 
\begin{theorem}
[{\cite[Thm.\ 1.2]{hogeroehrle:super}, 
\cite[Thm.\ 1.3, Cor.\ 1.4]{hogeroehrle:nice}}]
\label{thm:ssW}
Let $\Phi$ be an irreducible reduced root system with 
Weyl arrangement $\CA = \CA(\Phi)$.
Then 
\begin{itemize}
\item[(i)]
$\CA$ is supersolvable if and only if 
$\Phi$ is of type $A_n$, $B_n$, $C_n$, or $G_2$; 
\item[(ii)]
$\CA$ is inductively factored if and only if $\CA$ is supersolvable;
\item[(iii)]
$\CA$ is nice if and only if $\CA$ is inductively factored.
\end{itemize}
\end{theorem}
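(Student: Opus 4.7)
The plan is to exploit the implication chain
\[
\text{supersolvable} \Longrightarrow \text{inductively factored} \Longrightarrow \text{nice}
\]
(Proposition \ref{prop:superindfactored} together with Definition \ref{def:indfactored}(ii)), so that the three equivalences (i)--(iii) reduce to two tasks: classifying the supersolvable Weyl arrangements, and showing that a nice Weyl arrangement must already be supersolvable. Since all three properties pass to arbitrary localizations (Propositions \ref{prop:superlocal}, \ref{prop:indfaclocal}, and Remark \ref{rem:factored}), and since $D_4$ occurs as a standard parabolic subsystem in every $D_n$ with $n\ge 4$ and in every $E_n$ with $n\ge 6$, all ``only if'' assertions for these types reduce to checking $D_4$, while $F_4$ must be handled in its own right.

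For the ``if'' direction of (i), I would exhibit an explicit chain of modular elements in $L(\CA(\Phi))$ for each type in the list. For $A_n$ this is the classical construction underlying the braid arrangement, and $G_2$ is immediate from Theorem \ref{thm:super} since it has rank $2$. For $B_n$ and $C_n$, one checks via the pencil criterion of Lemma \ref{lem:modular1} that the subspaces $X_k:=\{x_1=\cdots=x_k=0\}$ form a maximal modular chain in $L(\CA(\Phi))$.

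For the ``only if'' direction of (i), Theorem \ref{thm:super} forces the existence of a modular coatom, which together with Lemma \ref{lem:modular2} and the observation that a modular coatom of a Weyl arrangement is the fixed space of a maximal standard parabolic subsystem $\Phi_0$, yields a pair $(\Phi,\Phi_0)$ satisfying the strong pencil condition: any two roots in $\Phi^+\setminus\Phi_0^+$ must be linearly dependent on some root of $\Phi_0^+$. In $D_4$ the maximal parabolics are the three conjugate copies of $A_3$ and the reducible $A_1^3$; for each choice I would exhibit an explicit pair of complementary positive roots violating the pencil condition (for instance $e_1+e_2$ and $e_3+e_4$ against $\Phi_0=A_3$, so that no hyperplane of $\CA(A_3)$ contains their intersection $\{x_1+x_2=x_3+x_4=0\}$). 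An analogous case-by-case elimination over the maximal parabolics $B_3$, $C_3$, $A_2\times A_1$ of $F_4$ disposes of the remaining exceptional case, and localization then excludes $D_n$ for $n\ge 4$ as well as $E_6$, $E_7$, $E_8$.

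The remaining step is to establish \emph{nice implies supersolvable} for Weyl arrangements, which via the same localization reduction amounts to ruling out every nice partition of $\CA(D_4)$ and $\CA(F_4)$. The key input is that, for a free arrangement, the block sizes of a nice partition are forced by the exponents through the tensor factorization of the Orlik--Solomon algebra, giving candidate block sizes $\{1,3,3,5\}$ for $D_4$ and $\{1,5,7,11\}$ for $F_4$; for each candidate partition one then has to produce an $X\in L(\CA)\setminus\{V\}$ whose induced partition has no singleton block, contradicting Definition \ref{def:factored}(c)(ii). This final check is the main obstacle: the block-size constraint reduces the problem to a finite search, but the search is sizeable and finite-type specific, which is why the proofs in \cite{hogeroehrle:super} and \cite{hogeroehrle:nice} carry it out by a detailed combinatorial analysis of small-rank flats rather than via a uniform conceptual argument.
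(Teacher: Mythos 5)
The paper does not prove Theorem \ref{thm:ssW}; it is quoted verbatim from \cite{hogeroehrle:super} and \cite{hogeroehrle:nice}, so there is no internal proof for me to compare yours against. That said, your reduction is the right one and follows the same architecture as those references. Using the chain supersolvable $\Rightarrow$ inductively factored $\Rightarrow$ nice (Proposition \ref{prop:superindfactored} together with an inductive application of the addition part of Theorem \ref{thm:add-del-factored} --- the second implication is not, as you write, a direct consequence of Definition \ref{def:indfactored}(ii)), combined with the stability of all three properties under localization and the fact that $D_4$ is a standard parabolic of every $D_n$ with $n\ge4$ and of $E_6$, $E_7$, $E_8$, the whole theorem collapses to: (a) exhibit a modular chain in types $A_n$, $B_n/C_n$, $G_2$; and (b) show that $\CA(D_4)$ and $\CA(F_4)$ admit no nice partition. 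Your treatment of (a) and of the supersolvability half of (b) is sound: the pencil-condition counterexample $e_1+e_2$, $e_3+e_4$ against the $A_3$ inside $D_4$ is correct, and the passage to \emph{standard} maximal parabolics is harmless because a modular coatom localizes to the reflection arrangement of a parabolic subgroup, which is $W$-conjugate to a standard one, and the properties in question are $W$-invariant.

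The genuine gap is the one you flag yourself. Matching the two factorizations of the Orlik--Solomon Poincar\'e polynomial does force the block multiset of any nice partition of a free essential arrangement to equal the exponent multiset, hence $\{1,3,3,5\}$ for $D_4$ and $\{1,5,7,11\}$ for $F_4$; but this only restricts the search, it does not carry it out. Producing, for every partition with these block sizes, a flat $X\in L(\CA)\setminus\{V\}$ whose induced partition has no singleton block (Definition \ref{def:factored}(c)(ii)) is exactly the nontrivial combinatorial content of \cite{hogeroehrle:nice}, and it is not a short verification. Until that check is supplied, your proposal establishes part (i) and the easy implications in (ii) and (iii), but not that a nice Weyl arrangement must already be supersolvable, which is the heart of the statement.
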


\subsection{Parabolic subsystems and parabolic subgroups}
\label{ssect:parabolic}

Let $\Phi$
be a reduced root system 
of rank $n$ with 
Weyl group $W$ and reflection arrangement
$\CA = \CA(\Phi) = \CA(W)$.

For $w \in W$, write 
$\Fix(w) :=\{ v\in V \mid w v = v\}$ for 
the fixed point subspace of $w$.
For $U \subseteq V$ a subspace, we 
define the \emph{parabolic subgroup}
$W_U$ of $W$ by 
$W_U := \{w \in W \mid U \subseteq \Fix(w)\}$.
By Steinberg's Theorem \cite[Thm.\ 1.5]{steinberg:invariants},
for $U \subseteq V$ a subspace, 
the parabolic subgroup
$W_U$ is itself a Coxeter group,
generated by the reflections in $W$ that are contained
in $W_U$. 
Let $X = \cap_{H \in \CA_U} H$.
Then $\CA_X = \CA_U$ and 
$X \in L(\CA)$. 
Thus, 
$\CA(W_U) = \CA_U = \CA_X = \CA(W_X)$, 
by \cite[Thm.\ 6.27, Cor.\ 6.28]{orlikterao:arrangements}.

Let $\Phi^+$
be the set of positive roots with respect to 
some set of simple roots $\Pi$  of $\Phi$.
For $\Pi_0$ a proper subset of $\Pi$, the 
\emph{(standard parabolic) subsystem} of $\Phi$
generated by $\Pi_0$ is $\Phi_0 := \BBZ \Pi_0 \cap \Phi$,
cf.~\cite[Ch.\ VI \S 1.7]{bourbaki:groupes}. 
Define $\Phi_0^+ := \Phi_0 \cap \Phi^+$,
the set of positive roots of $\Phi_0$ with respect to $\Pi_0$.
If the rank of $\Phi_0$ is $n-1$, 
then $\Phi_0$ is said to be \emph{maximal}.

Set $X_0 := \cap_{\gamma \in \Phi_0^+} H_\gamma$.
Then $\CA(\Phi)_{X_0} = \CA(\Phi_0)$. Therefore, 
the reflection arrangement $\CA(W_{X_0})$
of the parabolic subgroup $W_{X_0}$
is just $\CA(\Phi_0)$,
i.e.\ $\Phi_0$ is the root system of $W_{X_0}$.

\subsection{On the number of ideals}
\label{sect:noideals}
Suppose that $\Phi$ is irreducible with Weyl group $W$.
Let $\theta$ be the highest root in $\Phi$.
A closed formula for the number of all
ideals $\CI$ in $\Phi^+$ was given by 
Cellini and Papi 
\cite{cellinipapi:nilpotentI, cellinipapi:nilpotentII}:
\begin{equation}
\label{eq:noI}
\frac{1}{|W|} \prod_{i=1}^n(h + e_i + 1),
\end{equation}
where $h = \hgt(\theta) + 1$ is the 
Coxeter number of $\Phi$, and 
$e_1, \ldots , e_n$
are the exponents of $W$.

Following \cite{sommers:ideals}, we
call an ideal $\CI$ in $\Phi^+$ \emph{strictly positive} provided
it satisfies $\CI \cap \Pi = \varnothing$, 
i.e., provided it does not contain a simple root.
Note that this includes the empty ideal.
Sommers proved a closed formula for the number of strictly positive ideals
in \cite{sommers:ideals}:
\begin{equation}
\label{eq:noII}
\frac{1}{|W|} \prod_{i=1}^n(h + e_i - 1), 
\end{equation}
where the notation is as in  
\eqref{eq:noI}.

It is also useful to have closed expressions in terms of the rank for the 
numbers above for the classical types. 
These along with the same numbers for the exceptional types
belong to the famous Catalan sequences. For further information 
on the combinatorial aspects and ubiquity of the latter, see 
for instance \cite{stanley:book2}.

\begin{table}[ht!b]
\renewcommand{\arraystretch}{1.6}
\begin{tabular}{r|cccc}
\hline
$\Phi$ & $A_n$ & $B_n$ & $C_n$ & $D_n$  \\ 
\hline\hline
all $\CI$ 
& $\frac{1}{n+2}\binom{2n+2}{n+1}$ & $\binom{2n}{n}$ & $\binom{2n}{n}$ & 
$\binom{2n-1}{n} + \binom{2n-2}{n}$ \\
strictly positive $\CI$ 
& $\frac{1}{n+1}\binom{2n}{n}$ & $\binom{2n-1}{n-1}$ & $\binom{2n-1}{n-1}$ & 
$\binom{2n-2}{n} + \binom{2n-3}{n}$ \\
\hline
\end{tabular}
\smallskip
\caption{The number of all $\CI$ and strictly positive $\CI$ for classical $\Phi$}
\label{table:no-ideals-classical}
\end{table}

In Table \ref{table:noI} we present the 
number of ideals $\CI$ that lie in $\CI_t$ for $t \ge 1$
for the irreducible root systems of exceptional type of rank
at least $4$.
Thus the first two rows give the number of 
ideals in $\Phi^+$, respectively strictly positive  
ideals in $\Phi^+$, according to 
\eqref{eq:noI} and \eqref{eq:noII}.

\begin{table}[ht!b]
\renewcommand{\arraystretch}{1.2}
\begin{tabular}{r|rrrr}\hline
$\Phi$ &  $F_4$ & $E_6$ & $E_7$ & $E_8$\\ 
\hline\hline
$\CI$ & 105 & 833 & 4160 & 25080 \\
\hline
$\CI \subseteq \CI_2$ & 66  & 418 & 2431 & 17342\\
$\CI \subseteq \CI_3$ & 48  & 254  & 1660  & 13395 \\
$\CI \subseteq \CI_4$ & 36  & 150  & 1162  & 10714 \\
$\CI \subseteq \CI_5$ & 22  & 62  & 726  & 8330 \\
$\CI \subseteq \CI_6$ &     &     & 403  & 6623 \\
$\CI \subseteq \CI_7$ &     &     &      & 4500 \\
\hline
  $*$  & 20  & 62  & 727  & 6178 \\
\hline
\end{tabular}
\smallskip
\caption{The number of ideals $\CI$ in $\CI_t$ for $t \ge 1$}
\label{table:noI}
\end{table}

The last row in Table \ref{table:noI} labeled with $*$ gives the number of 
ideals $\CI$ that are not covered by the inductive argument of 
Theorems \ref{thm:indfree-exceptional} and \ref{thm:zeta-factors-count},
see \S \ref{ss:lastentry} and \S \ref{ss:summary},
and \S \ref{ssec:rank}.
The significance of the last entry in each column 
above the row labeled with $*$ is explained in 
\S \ref{ss:lastentry}.

\section{Arrangements of Ideal Type}
\label{s:idealtype}

We maintain the notation and setup from the Introduction and \S \ref{sect:prelims}. 
In particular, let $\Phi$ be a reduced root system in the
real $n$-space $V$ with a fixed set of simple roots $\Pi$,
so that $|\Pi| = n$, and
corresponding set of positive roots $\Phi^+$.
Throughout, let $\CA = \CA(\Phi)$ be the Weyl 
arrangement of $\Phi$.
Let $\Phi_0$ be a proper parabolic subsystem of $\Phi$,
let $\CI$ be an ideal in $\Phi^+$ and let $\CI_0 = \CI \cap \Phi_0^+$.
We start with an elementary but crucial observation.

\begin{lemma}
\label{lem:ideallocal}
Let $\Phi$, $\CI$ and $\Phi_0$ be as above.
Then, viewing $\CA_{\CI_0}$ as a subarrangement of $\CA_\CI$,
we have 
$\CA_{\CI_0} = (\CA_\CI)_{X_0}$, where 
$X_0 := \cap_{\gamma \in \Phi_0^+} H_\gamma$.
\end{lemma}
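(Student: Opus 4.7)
The plan is to compare the two sides directly by unpacking definitions. The arrangement $(\CA_\CI)_{X_0}$ consists of those $H_\alpha$ with $\alpha \in \CI^c$ for which $X_0 \subseteq H_\alpha$, whereas $\CA_{\CI_0}$ consists of the $H_\gamma$ with $\gamma \in \CI_0^c = \Phi_0^+ \setminus \CI_0$. So the proof reduces to verifying the set-theoretic identity
\[
\{\alpha \in \CI^c \mid X_0 \subseteq H_\alpha\} \;=\; \CI_0^c.
\]

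The key step is to identify $X_0^\perp$. By definition $X_0 = \bigcap_{\gamma \in \Phi_0^+} H_\gamma$, so $X_0^\perp$ equals the linear span of $\Phi_0^+$, which is the same as $\BBR \Pi_0$. Thus for any $\alpha \in \Phi$, the inclusion $X_0 \subseteq H_\alpha$ is equivalent to $\alpha \perp X_0$, i.e.\ $\alpha \in \BBR \Pi_0$. Invoking the standard fact on standard parabolic subsystems recorded in \S \ref{ssect:parabolic}, namely $\BBR \Pi_0 \cap \Phi = \BBZ \Pi_0 \cap \Phi = \Phi_0$, and using $\alpha \in \CI^c \subseteq \Phi^+$, the condition becomes simply $\alpha \in \Phi_0^+$.

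It remains to perform the set-theoretic manipulation
\[
\CI^c \cap \Phi_0^+ \;=\; (\Phi^+ \setminus \CI) \cap \Phi_0^+ \;=\; \Phi_0^+ \setminus (\CI \cap \Phi_0^+) \;=\; \Phi_0^+ \setminus \CI_0 \;=\; \CI_0^c,
\]
which yields the claimed equality of hyperplane sets. There is no real obstacle here; the lemma is essentially an unwinding of definitions once the identification $X_0^\perp \cap \Phi = \Phi_0$ is in place. The only subtlety worth flagging is the implicit identification of $V$ with its dual via the Euclidean form on $V = \BBR \otimes \BBZ \Phi$, so that a root $\alpha$ may be viewed as a linear functional whose vanishing locus is $H_\alpha$; this is the convention adopted in \S \ref{sect:idealexponents} when defining $\CA(\Phi)$.
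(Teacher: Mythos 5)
Your proof is correct and follows essentially the same route as the paper: the paper's proof simply cites the fact from \S \ref{ssect:parabolic} that $\CA(\Phi)_{X_0} = \CA(\Phi_0)$ and then performs the same set-theoretic identification $\CI_0^c = \CI^c \cap \Phi_0^+$, whereas you unpack that cited fact by computing $X_0^\perp = \BBR\Pi_0$ and using $\BBR\Pi_0 \cap \Phi = \Phi_0$. There is no substantive difference in approach.
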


\begin{proof}
As a subarrangement of $\CA = \CA(\Phi)$,
$\CA(\Phi_0)$ coincides with $\CA_{X_0}$, 
cf.~\S \ref{ssect:parabolic}.
Therefore, 
viewing $\CA_{\CI_0}$ as a subarrangement of $\CA_\CI$, we have 
\[
\CA_{\CI_0} = \{H_\gamma \mid \gamma \in \CI_0^c = \CI^c \cap \Phi_0^+\} 
=  \CA_\CI \cap \CA_{X_0} =  (\CA_\CI)_{X_0},
\]
as desired.
Note that $X_0$ need not belong to $L(\CA_\CI)$ in general; e.g.\ see
Example \ref{ex:cond} below.
\end{proof}

The following is immediate from 
Propositions \ref{prop:indfreelocal},
\ref{prop:superlocal}, \ref{prop:indfaclocal} and Lemma \ref{lem:ideallocal}.

\begin{corollary}
\label{cor:ssideal}
Let $\Phi$, $\CI$ and $\Phi_0$ be as above.
\begin{itemize}
\item[(i)] 
If $\CA_\CI$ is supersolvable, then so is 
$\CA_{\CI_0}$.
\item[(ii)] 
If $\CA_\CI$ is inductively free, then so is 
$\CA_{\CI_0}$.
\item[(iii)] 
If $\CA_\CI$ is inductively factored, then so is 
$\CA_{\CI_0}$.
\end{itemize}
\end{corollary}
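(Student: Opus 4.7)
The plan is to observe that Lemma \ref{lem:ideallocal} already does the heavy lifting: it identifies $\CA_{\CI_0}$, viewed as a subarrangement of $\CA_\CI$, with the localization $(\CA_\CI)_{X_0}$, where $X_0 = \bigcap_{\gamma \in \Phi_0^+} H_\gamma$. Each of the three freeness properties under consideration (supersolvability, inductive freeness, inductive factoredness) has been shown in \S \ref{sect:prelims} to be inherited by arbitrary localizations. So the corollary should follow by pairing Lemma \ref{lem:ideallocal} with these closure statements.

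Concretely, for (i), I would invoke Proposition \ref{prop:superlocal}: if $\CA_\CI$ is supersolvable, then its localization at the subspace $X_0$ is supersolvable, and by Lemma \ref{lem:ideallocal} this localization is precisely $\CA_{\CI_0}$. For (ii), the same argument with Proposition \ref{prop:indfreelocal} gives inductive freeness of $\CA_{\CI_0}$. For (iii), Proposition \ref{prop:indfaclocal} gives inductive factoredness.

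There is no real obstacle here; the statement is essentially a direct consequence of the definitions combined with Lemma \ref{lem:ideallocal}. The only point one needs to be a little careful about is that the notion of ``localization at a subspace $U$'' used in Propositions \ref{prop:indfreelocal}, \ref{prop:superlocal}, \ref{prop:indfaclocal} agrees with the lattice-theoretic localization $(\CA_\CI)_{X_0}$ appearing in Lemma \ref{lem:ideallocal}; but this is already built into the definitions recalled in \S \ref{ssect:arrangements}, since for any subspace $U$ one has $\CA_U = \CA_X$ with $X = \bigcap_{H \in \CA_U} H$. The proof therefore reduces to one line per item.
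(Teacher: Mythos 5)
Your proof is correct and follows exactly the paper's own argument: the paper states the corollary is immediate from Propositions \ref{prop:indfreelocal}, \ref{prop:superlocal}, \ref{prop:indfaclocal} together with Lemma \ref{lem:ideallocal}. Your additional observation about reconciling localization at a subspace $U$ with $\CA_X$ for $X = \bigcap_{H \in \CA_U} H$ is a sensible point of care, and is indeed already covered by the definitions in \S\ref{ssect:arrangements}.
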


Before discussing consequences of 
Condition \ref{cond:linear}, we 
illustrate two instances in an easy example
when this condition holds, respectively fails.
Note that in this setting 
$\Phi_0$ is assumed to be a maximal parabolic subsystem of $\Phi$.

\begin{example}
\label{ex:cond}
Let $\Phi$ be of type $A_3$ with simple roots $\Pi = \{\alpha, \beta, \gamma\}$.
Let $\CI$ be the ideal in $\Phi^+$ generated by $\beta$.
Then $\CI^c = \{\alpha, \gamma\}$, so that $\CA_\CI = \{H_\alpha, H_\gamma\}$ is of rank $2$.
We consider two different maximal parabolic subsystems in turn.

(a). First let $\Phi_0$ be the subsystem of $\Phi$ of type $A_2$ generated by $\alpha$ and $\beta$.
Then $\CI_0 = \{\beta, \alpha + \beta\}$ and $\Phi^c_0 \cap \CI^c = \{\gamma\}$.
So, $\CI_0^c = \{\alpha\}$ and $\CA_{\CI_0} = \{H_\alpha\}$.
Clearly, $\CI_0$ is not an ideal in $\Phi^+$.
Note that $X_0 = H_\alpha \cap H_\beta \not \in L(\CA_\CI)$.
But $(\CA_\CI)_{X_0} = \{H_\alpha\}$ so that 
$r((\CA_\CI)_{X_0}) = r(\CA_\CI)-1$.
In particular, Condition \ref{cond:linear} holds in this instance.

(b). Second let  $\Phi_0$ be the subsystem of $\Phi$ of type 
$A_1^2$ generated by $\alpha$ and $\gamma$.
This time $\CI_0 = \varnothing$ which is of course an ideal in $\Phi^+$.
Moreover, we have $\CI_0^c = \Phi_0^+ = \{\alpha, \gamma\} = \CI^c$.
Now $X_0 = H_\alpha \cap H_\gamma \in L(\CA_\CI)$.
So that $\CA_{\CI_0} = (\CA_\CI)_{X_0} = \CA_\CI$, and in particular, 
$r(\CA_{\CI_0}) = r(\CA_\CI)$.
Here Condition \ref{cond:linear} is not satisfied, as 
$\Phi^c_0 \cap \CI^c = \varnothing$.
\end{example}

Our next observation, showing that  
Condition \ref{cond:linear} entails the presence 
of a modular element in $L(\CA_\CI)$ of rank 
$r(\CA_\CI)-1$, is pivotal for our entire analysis.

\begin{lemma}
\label{lem:condition-modular}
If $\CI \subseteq \Phi^+$ and 
$\Phi_0$ satisfy Condition \ref{cond:linear},
then for $X_0 := \cap_{\gamma \in \Phi_0^+} H_\gamma$,
the center 
\[
Z:= T((\CA_\CI)_{X_0})
\] 
of $(\CA_\CI)_{X_0}$
is modular of rank $r(\CA_\CI)-1$ in $L(\CA_\CI)$.
\end{lemma}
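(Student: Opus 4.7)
The plan is to verify three claims: first, $Z\in L(\CA_\CI)$; second, $Z$ has rank $r(\CA_\CI)-1$; third, $Z$ satisfies the characterization of modularity in Lemma \ref{lem:modular1}. Throughout, the key structural fact I would exploit is that $\Phi_0$ being a maximal parabolic subsystem means $\Pi\setminus\Pi_0=\{s_0\}$ consists of a single simple root, and the elements of $\Phi^c_0$ are precisely those positive roots with strictly positive $s_0$-coefficient.

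For the first claim, by Lemma \ref{lem:ideallocal} we have $(\CA_\CI)_{X_0}=\CA_{\CI_0}$, so $Z=\bigcap_{\alpha\in\CI_0^c}H_\alpha$. Since $\CI_0^c\subseteq\CI^c$, this is an intersection of hyperplanes of $\CA_\CI$ and thus lies in $L(\CA_\CI)$.

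For the rank computation, I would show $r(\CA_\CI)=r(\CA_{\CI_0})+1$ by a dimension count in $V^*$. Picking any $\alpha_0\in\Phi^c_0\cap\CI^c$ (nonempty by Condition \ref{cond:linear}), its positive $s_0$-coefficient places it outside $\mathrm{span}(\Phi_0)\supseteq\mathrm{span}(\CI_0^c)$, giving the lower bound $r(\CA_\CI)\ge r(\CA_{\CI_0})+1$. For the upper bound, the pairwise linear dependence from Condition \ref{cond:linear} together with the linear order on $\Phi^c_0\cap\CI^c$ and the upper-ideal property of $\CI$ should show inductively that every $\beta\in\Phi^c_0\cap\CI^c$ lies in $\mathrm{span}(\CI_0^c)+\BBR\alpha_0$: proceeding along the chain $\alpha_0\prec\alpha_1\prec\cdots$ of $\Phi^c_0\cap\CI^c$, each consecutive step can be shown to sit in $\mathrm{span}(\CI_0^c)+\BBR\alpha_0$ by combining the rank-$2$ linear dependence with the $s_0$-coefficient balance, so the entire span collapses to dimension $r(\CA_{\CI_0})+1$.

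For modularity, I would invoke Lemma \ref{lem:modular1}: since $Z$ has rank $r(\CA_\CI)-1$, it suffices to show that for any two distinct hyperplanes $H_\alpha,H_\beta\in\CA_\CI\setminus(\CA_\CI)_{X_0}$, which are indexed by distinct $\alpha,\beta\in\Phi^c_0\cap\CI^c$, there exists $H_\delta\in(\CA_\CI)_{X_0}$ with $\alpha,\beta,\delta$ linearly dependent, i.e.\ $\delta\in\CI_0^c\cap\mathrm{span}(\alpha,\beta)$. Condition \ref{cond:linear} already supplies \emph{some} $\gamma\in\Phi_0^+\cap\mathrm{span}(\alpha,\beta)$; the main obstacle is to upgrade this abstract $\gamma\in\Phi_0^+$ to an actual root in $\CI^c$. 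Assuming $\beta\prec\alpha$, and using the $s_0$-coefficient balance forced by $\gamma\in\mathrm{span}(\Phi_0)$ together with a short case analysis on the possible rank-$2$ root subsystem types in $\Phi\cap\mathrm{span}(\alpha,\beta)$ (namely $A_2$, $B_2$, $C_2$, or $G_2$, the case $A_1\times A_1$ being excluded since it contains no roots in $\Phi_0$), I would identify a witness $\delta\in\Phi_0^+\cap\mathrm{span}(\alpha,\beta)$ fitting a relation of the form $\alpha=\beta+\delta$. The upper-ideal property then forces $\delta\in\CI^c$: if $\delta\in\CI$, then since $\beta\in\Phi^+$ and $\beta+\delta=\alpha\in\Phi^+$, the definition of an ideal would imply $\alpha\in\CI$, contradicting $\alpha\in\CI^c$. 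The careful selection of this witness $\delta$ in the rank-$2$ subsystem, together with the interplay between the linear order on $\Phi^c_0\cap\CI^c$ and the upper-ideal property, forms the technical heart of the argument.
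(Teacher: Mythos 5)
Your overall plan matches the paper's: identify $Z$ as an element of $L(\CA_\CI)$ via Lemma \ref{lem:ideallocal}, compute its rank, and verify modularity through Lemma \ref{lem:modular1} by showing the root $\gamma \in \Phi_0^+$ supplied by Condition \ref{cond:linear} actually lands in $\CI_0^c$. The modularity step is the crux, and you correctly identify it as the technical heart — the paper disposes of it with the terse ``Since $\alpha$ and $\beta$ both belong to $\CI^c$, so does $\gamma$,'' which hides exactly the rank-2 analysis you sketch. Two points of comparison. First, the specific relation $\alpha = \beta + \delta$ is too narrow: in non-simply-laced rank-2 subsystems, one can have $\alpha - \beta = k\delta$ with $k > 1$ (for instance $(\epsilon_1 + 2\epsilon_2) - \epsilon_1 = 2\epsilon_2$ in $B_2$, with $\epsilon_1$ the removed simple root). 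The argument still goes through — either iterate the closure condition along the root string $\beta, \beta+\delta, \ldots, \alpha$, or, more cleanly, note $\delta \preceq \alpha$ directly and use that $\CI^c$ is $\preceq$-downward closed — but as written the witness relation needs this small repair. Second, your rank computation via an inductive chain $\alpha_0 \prec \alpha_1 \prec \cdots$ is more roundabout than necessary and, as stated, implicitly relies on knowing that the auxiliary $\delta$ at each step lies in $\mathrm{span}(\CI_0^c)$, which is essentially the same fact you establish only later for modularity; it also needs $\alpha_0$ to be the $\preceq$-minimal element (which is $s_0$ itself), not an arbitrary member. The paper sidesteps all of this: since $\CI^c$ is downward closed, every simple root in the support of a member of $\CI^c$ again lies in $\CI^c$, so $\CI^c$ is spanned by $\{s_0\} \cup (\Pi_0 \cap \CI_0^c)$ and $r(\CA_\CI) = r(\CA_{\CI_0}) + 1$ falls out at once, with no case analysis. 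That cleaner observation is worth adopting; it decouples the rank bound from the modularity argument.
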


\begin{proof}
By construction and Lemma \ref{lem:ideallocal}, we have 
\begin{equation}
\label{eq:Z}
\CA_{\CI_0} = (\CA_\CI)_{X_0} = (\CA_\CI)_Z.
\end{equation}
Thanks to Condition \ref{cond:linear}, 
$\Phi^c_0 \cap \CI^c \not= \varnothing$ and 
$\Phi^c_0 \cap \CI^c$ is linearly ordered by height.
Consequently, there is a unique
simple root $\alpha$ in $\Phi^c_0 \cap \CI^c$.
Since $\alpha$ is linearly independent from $\CI_0^c$ and 
every root in $\CI^c$ is a $\BBZ_{\ge 0}$-linear combination 
of $\alpha$ and roots from $\CI_0^c$,
we have 
$r(\CA_{\CI_0}) = r(\CA_\CI)-1$.
We conclude that $r(Z) = r((\CA_\CI)_Z) = r(\CA_\CI)-1$, by \eqref{eq:Z}.

It remains to show that $Z$ is modular.
If $\alpha \ne \beta \in \Phi^c_0 \cap \CI^c$,
then by Condition \ref{cond:linear},
there is a $\gamma \in \Phi_0^+$ so that 
$\alpha, \beta$ and $\gamma$ are linearly dependent.
Since $\alpha$ and $\beta$ both belong to $\CI^c$, 
so does $\gamma$.
It follows that $\gamma \in \Phi_0^+ \cap \CI^c = \CI_0^c$.
Thus $H_\gamma \in \CA_{\CI_0} = (\CA_\CI)_Z$, by \eqref{eq:Z}.
On the other hand, $H_\alpha, H_\beta \in \CA_\CI \setminus (\CA_\CI)_Z$.
Since $\alpha, \beta$ and $\gamma$ are linearly dependent,
$Z$ is modular, by Lemma \ref{lem:modular1}.
\end{proof}

\begin{lemma}
\label{lem:linear}
Suppose $\Phi$, $\CI$ and $\Phi_0$ 
satisfy Condition \ref{cond:linear}.
Let $\delta$ be 
in $\Phi^c_0 \cap \CI^c$. 
Then the restriction of $\CA_\CI$ to $H_\delta$ is isomorphic to the
arrangement of ideal type $\CA_{\CI_0}$ in $\CA(\Phi_0)$.
\end{lemma}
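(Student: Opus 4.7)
The plan is to deduce this directly from the modular element machinery already developed, in particular Lemmas \ref{lem:condition-modular} and \ref{lem:modular2}, which together furnish an abstract lattice isomorphism between a localization at a modular hyperplane-complement and any restriction outside that localization.

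First, I would set $X_0 := \cap_{\gamma \in \Phi_0^+} H_\gamma$ and let $Z := T((\CA_\CI)_{X_0})$, the center of the localization. By Lemma \ref{lem:ideallocal} we have the identification $\CA_{\CI_0} = (\CA_\CI)_{X_0} = (\CA_\CI)_Z$. Since Condition \ref{cond:linear} is in force, Lemma \ref{lem:condition-modular} then gives that $Z \in L(\CA_\CI)$ is modular of rank $r(\CA_\CI)-1$.

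Next, I would verify that $H_\delta$ is a hyperplane of $\CA_\CI$ that lies outside $(\CA_\CI)_Z$. The hypothesis $\delta \in \Phi_0^c \cap \CI^c$ gives $H_\delta \in \CA_\CI$ (because $\delta \in \CI^c$), whereas $\delta \notin \Phi_0^+$ means $H_\delta \notin \CA_{\CI_0} = (\CA_\CI)_Z$. Hence $H_\delta \in \CA_\CI \setminus (\CA_\CI)_Z$, placing us precisely in the setting of Lemma \ref{lem:modular2}.

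Applying Lemma \ref{lem:modular2} with $\CA$ replaced by $\CA_\CI$ and $X$ replaced by $Z$ then yields a lattice isomorphism $L((\CA_\CI)_Z) \to L(\CA_\CI^{H_\delta})$, and in particular the arrangement isomorphism $(\CA_\CI)_Z \cong \CA_\CI^{H_\delta}$. Combined with the identification $(\CA_\CI)_Z = \CA_{\CI_0}$, this gives $\CA_\CI^{H_\delta} \cong \CA_{\CI_0}$, which is exactly the claim. There is no serious obstacle here: the hard work has been packaged into Lemma \ref{lem:condition-modular}, and the present statement is essentially a translation of that modularity fact into the language of restrictions via Lemma \ref{lem:modular2}; the only thing to check carefully is that $\delta \in \Phi_0^c \cap \CI^c$ is exactly what makes $H_\delta$ an eligible hyperplane for that application.
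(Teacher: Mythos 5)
Your proof is correct and follows essentially the same route as the paper's: invoke Lemma \ref{lem:condition-modular} to get modularity of $Z$ at rank $r(\CA_\CI)-1$, identify $\CA_{\CI_0}$ with $(\CA_\CI)_Z$ via Lemma \ref{lem:ideallocal}, and then apply Lemma \ref{lem:modular2}. Your explicit verification that $H_\delta \in \CA_\CI \setminus (\CA_\CI)_Z$ is a small but welcome extra care that the paper leaves implicit.
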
 

\begin{proof}
By Lemma \ref{lem:condition-modular},
for $X_0 := \cap_{\gamma \in \Phi_0^+} H_\gamma$,
the center
$Z:= T((\CA_\CI)_{X_0})$ of $(\CA_\CI)_{X_0}$
is modular of rank $r(\CA_\CI)-1$ in $L(\CA_\CI)$.
By \eqref{eq:Z}, 
$\CA_{\CI_0} = (\CA_\CI)_{X_0} = (\CA_\CI)_Z$.
Therefore, it follows from Lemma \ref{lem:modular2}
that the restriction map 
$\varrho_\delta : \CA_{\CI_0} \to \CA_\CI^{H_\delta}$ given by 
$H_\gamma \mapsto H_\delta \cap H_\gamma$ defines an 
isomorphism between 
$\CA_{\CI_0}$ and $\CA_\CI^{H_\delta}$.
\end{proof}

We are now in a position to prove Theorem \ref{thm:I1I}.

\begin{proof}
[Proof of Theorem \ref{thm:I1I}]
The reverse implications follow in each instance from 
Corollary \ref{cor:ssideal}.

Now consider the forward implications. 
If $\CA_\CI$ has rank at most $2$, then all statements clearly hold,
as then $\CA_\CI$  is supersolvable, 
cf.~Theorems \ref{thm:super} and \ref{thm:superindfree}
and Proposition \ref{prop:superindfactored}.
So suppose that $r(\CA_\CI) \ge 3$.

If $\Phi^c_0 \cap \CI^c = \varnothing$, 
then $\CA_\CI$ is the product of the $1$-dimensional 
empty arrangement $\varnothing_1$ and $\CA_{\CI_0}$.
So each statement follows from 
Propositions \ref{prop:product-super},
\ref{prop:product-indfree}, 
and 
\ref{prop:product-indfactored},
respectively.
Therefore, we may assume that $\Phi^c_0 \cap \CI^c \ne \varnothing$.
Then define the non-empty subarrangement 
\[
\CA_{\CI_0}^c := \{H_\beta \mid \beta \in \Phi^c_0 \cap \CI^c\}
= \CA_\CI \setminus \CA_{\CI_0}
\]
of $\CA_\CI$. We may thus decompose $\CA_\CI$ as the 
proper disjoint union 
\begin{equation*}
\CA_\CI = \CA_{\CI_0} \coprod \CA_{\CI_0}^c.
\end{equation*}
Now Condition \ref{cond:linear} implies that 
the center $Z:= T((\CA_\CI)_{X_0})$ of $(\CA_\CI)_{X_0}$
is modular of rank $r(\CA_\CI) -1$ in $L(\CA_\CI)$, 
by Lemma \ref{lem:condition-modular}.
Thus the forward implications follow 
thanks to \eqref{eq:Z} from
Corollary \ref{cor:modular-super} and  
Lemmas \ref{lem:modular-indfree} and \ref{lem:modular-indfactored},
respectively.
\end{proof}

Armed with Theorem \ref{thm:I1I}
we can now derive Theorems \ref{thm:condition}
and \ref{thm:AnBn}.

\begin{proof}[Proof of Theorem \ref{thm:condition}]
If $\CA_\CI$ is reducible, then 
$\CA_\CI$ is the product of smaller rank arrangements of ideal type.
The result then follows from the inductive hypothesis along with 
Proposition \ref{prop:product-indfree}.

If $\CA_\CI$ is irreducible and there is a 
maximal parabolic subsystem of $\Phi$
such that Condition \ref{cond:linear} is satisfied,
it follows from the inductive hypothesis and 
Theorem \ref{thm:I1I}(ii) that $\CA_\CI$ is inductively free.
\end{proof}

We now apply Theorem \ref{thm:I1I} to various types.

In \cite[Lem.\ 7.1]{sommerstymoczko}, 
Sommers and Tymoczko showed
that for $\Phi$ of type $A_n$, $B_n$ or $C_n$
and the canonical choice of maximal subsystem 
$\Phi_0$ of type 
$A_{n-1}$, $B_{n-1}$ or $C_{n-1}$, respectively, 
Condition \ref{cond:linear}
is satisfied for any ideal $\CI$ with 
$\Phi^c_0 \cap \CI^c \ne \varnothing$.

\begin{proof}[Proof of Theorem \ref{thm:AnBn}]
For $\Phi$ of rank $2$, the result follows 
by the first part of Theorem \ref{thm:super}.

Let $\Phi$ be of type $A_n$, $B_n$, or $C_n$  for $n \ge 3$ and let 
$\CA = \CA(\Phi)$ be the Weyl arrangement of $\Phi$.
We argue by induction on $n$ and suppose 
that the result holds for smaller rank 
root systems of the same type as $\Phi$.
Let $\Phi_0$ be the standard maximal parabolic subsystem of $\Phi$ of 
type $A_{n-1}$, $B_{n-1}$ or $C_{n-1}$, respectively.
If $\Phi^c_0 \cap \CI^c = \varnothing$, 
then $\CA_\CI$ is the product of the $1$-dimensional 
empty arrangement $\varnothing_1$ and an arrangement of ideal type of 
$\Phi_0$, and the latter is 
supersolvable by induction on $n$,  
and so the result follows from
Proposition \ref{prop:product-super}.

Therefore, we may assume that $\Phi^c_0 \cap \CI^c \ne \varnothing$.
It follows from \cite[\S 7]{sommerstymoczko}
that $\Phi^c_0 = \Phi^+ \setminus \Phi_0^+$
is linearly ordered by height and that 
for each $\CI$ with
$\Phi^c_0 \cap \CI^c \ne \varnothing$, 
Condition \ref{cond:linear} is satisfied.
It thus follows by induction on $n$ and 
Theorem \ref{thm:I1I}(i) that 
each $\CA_\CI$ is supersolvable.
\end{proof}

With Theorem \ref{thm:AnBn} 
we readily get further
instances of supersolvable 
arrangements of ideal type in other types as well.

\begin{example}
\label{ex:typeA}
Let $\Phi$ be a reduced root system and let 
$\CI$ be an ideal in $\Phi^+$ with  
$\CI \cap \Pi \ne \varnothing$. Suppose that the simple factors in the 
complement $\Pi \setminus \CI$ 
are all of type $A$, $B$ or $C$. Then $\CA_\CI$ is a product
of arrangements of ideal type of types $A$, $B$ or $C$.
So $\CA_\CI$ is supersolvable, thanks to 
Theorem \ref{thm:AnBn}  and Proposition \ref{prop:product-super}.
For instance this is the case for any such $\CI$ in case $\Phi$ is of type $F_4$. 

Moreover, if it is the case that $\CA_{\CI_0}$ is supersolvable 
in all instances when Condition \ref{cond:linear} holds, then 
also $\CA_\CI$ is supersolvable, by Theorem \ref{thm:I1I}(i).
This is also the case for $F_4$, as then $\Phi_0$ is of type $B_3$ or $C_3$.
Consequently, out of the total of $105$ ideals in type $F_4$ the $83$ instances  
covered in Theorem \ref{thm:indfree-exceptional} (i) and (ii) are supersolvable.
\end{example}

Here is a uniform example of a supersolvable 
arrangement of ideal type in every type.

\begin{example}
\label{ex:ht1}
Observe that $\CI_2^c = \Pi$ and so 
$\CA_{\CI_2}$ is the Boolean arrangement of rank $n$ which is
known to be supersolvable, cf.~\cite[Ex.\ 2.33]{orlikterao:arrangements}.
\end{example}

Next we consider the case when $\Phi$ is of type $D_n$.
Here and later on we use the notation for the positive roots
from \cite[Planche IV]{bourbaki:groupes}.

\begin{proof}[Proof of Theorem \ref{thm:Dn}]
Let $\Phi$ be of type $D_n$, for $n \ge 4$ and
let $\Phi_0$ be the standard subsystem of $\Phi$ of type $D_{n-1}$.
Then $\Phi^c_0 = \{ e_1 \pm e_j \mid 2 \le j \le n\}$.
Note that $\Phi^c_0$ is not linearly ordered by 
$\preceq$, for $\beta^\pm : = \beta_n^\pm : = e_1 \pm e_n$
both have height $n-1$.

We argue by induction on $n$.
For $n = 3$, $\Phi$ is of type $D_3 = A_3$,
and so the result follows from 
Theorems \ref{thm:AnBn} and \ref{thm:superindfree}. 

Now suppose that $n \ge 4$ and that the result holds for
smaller rank root systems of type $D$.
If $\Phi^c_0 \cap \CI^c = \varnothing$, 
then $\CA_\CI$ is the product of the $1$-dimensional 
empty arrangement $\varnothing_1$ and an arrangement of ideal type of 
the subsystem of type $D_{n-1}$.
Since the latter is  
inductively free by induction on the rank,  
the result follows from 
Proposition \ref{prop:product-indfree}.
Therefore, we may assume that $\Phi^c_0 \cap \CI^c \ne \varnothing$.

(a). 
Suppose first that $\CI$ is an ideal in $\Phi^+$ so that
Condition \ref{cond:linear} is satisfied.
This is precisely the case as long as not both 
of $\beta^\pm = e_1 \pm e_n$ belong to $\CI^c$.
Let $\delta$ be the unique root of maximal 
height in $\Phi^c_0 \cap \CI^c$ and set $\CI_0 = \CI \cap \Phi_0$.
Then $\CI_0$ is an ideal in $\Phi_0^+$.
Set $m_s^\CI := \hgt(\delta)$, so that
thanks to Condition \ref{cond:linear}, we have
\[
m_s^\CI = |\Phi^c_0 \cap \CI^c| = |\CA_\CI \setminus \CA_{\CI_0}|.
\]
By induction on the rank, $\CA_{\CI_0}$ is inductively free
and by construction
$m_1^\CI, \ldots, m_{k-1}^\CI$ are the non-zero 
exponents of $\CA_{\CI_0}$, 
where $m_1^\CI, \ldots, m_s^\CI$ are the ideal exponents 
of $\CI$. Note that 
here we do not partially order the ideal exponents as in \eqref{eq:idealexp}. 

It follows from Theorem \ref{thm:I1I}(ii) that 
$\CA_\CI$ is inductively free with exponents 
$\exp \CA_\CI = \{0^{n-k},  m_1^\CI, \ldots, m_s^\CI\}$,
as desired.

(b).
Now we consider the cases when 
both $\beta^\pm = e_1 \pm e_n$ do belong to 
$\CI^c$. Here we follow closely the 
proof of \cite[Thm.\ 11.1]{sommerstymoczko}.

Suppose first that $\CI$ is such that both $\beta^\pm $ are maximal in 
$\Phi^c_0 \cap \CI^c$ with respect to $\preceq$. 
Set $\CI^+ := \CI \cup \{\beta^+\}$.
Then by case (a) proved above,
$\CA_{\CI^+}$ is inductively free
and 
\[
\exp \CA_{\CI^+} = \{m_1^\CI, \ldots, m_{n-2}^\CI, n-1, n-2\}.
\]
Let
$(\CA_\CI, \CA_\CI' = \CA_{\CI^+}, \CA_\CI'')$ be
the triple of $\CA_\CI$ with respect to $H_{\beta^+}$.
It remains to show that 
$\CA_\CI^{H_{\beta^+}}$ is inductively free with
\[
\exp \CA_\CI^{H_{\beta^+}} = \{m_1^\CI, \ldots, m_{n-2}^\CI, n-1\}.
\]
It then follows from Theorem \ref{thm:add-del} 
that $\CA_\CI$ is inductively free with 
\[
\exp \CA_\CI = \{m_1^\CI, \ldots, m_{n-2}^\CI, n-1, n-1\}.
\]
In order to show that $\CA_\CI^{H_{\beta^+}}$ is inductively free with
the desired exponents, we argue as follows.
Consider the triple of $\CA_\CI^{H_{\beta^+}}$ 
with respect to $H_{\beta^+} \cap H_{\beta^-}$.
Then the deleted arrangement
$\CA_\CI^{H_{\beta^+}}\setminus \{H_{\beta^+} \cap H_{\beta^-}\}$
coincides with $\CA_{\CI^-}^{H_{\beta^+}}$, where 
$\CI^- := \CI \cup \{\beta^-\}$ and so
the latter is inductively free
with exponents 
$\{m_1^\CI, \ldots, m_{n-2}^\CI, n-2\}$,
by case (a) above.
For $\CI^-$ satisfies Condition \ref{cond:linear} above and 
$\beta^+$ is maximal in $\Phi^c_0 \cap (\CI^-)^c$.
Finally, we need to show that the restricted arrangement
$\left(\CA_\CI^{H_{\beta^+}}\right)^{H_{\beta^+} \cap H_{\beta^-}}$
is inductively free. 
Since $H_{\beta^+} \cap H_{\beta^-}$ 
coincides with the intersection 
of the null spaces of $e_1$ and $e_n$, 
arguing as in the proof of \cite[Thm.\ 11.1]{sommerstymoczko},
this restricted arrangement coincides
with an arrangement of ideal type in 
a root system of type $B_{n-2}$
with exponents given by
$\{m_1^\CI, \ldots, m_{n-2}^\CI\}$.
Thanks to 
Theorems \ref{thm:AnBn} and \ref{thm:superindfree},
the latter is inductively free.
So, $\CA_\CI^{H_{\beta^+}}$ is inductively free,
by Theorem \ref{thm:add-del}. 

Finally, we consider the 
case when $\delta = e_1 + e_{2n - 1 - l}$ 
is the maximal element of $\Phi^c_0 \cap \CI^c$ for some $l > n - 1$.
We argue as in the
proof of \cite[Thm.\ 11.1]{sommerstymoczko}
and deduce again that $\CA_\CI$ is 
inductively free with
\[
\exp \CA_\CI = \{m_1^\CI, \ldots, m_{n-2}^\CI, n-1, l\},
\]
as follows.
By induction on $|\CI^c|$, we know that $\CA_{\CI'}$ is 
inductively free, for $\CI' = \CI \cup \{\delta\}$, with
\[
\exp \CA_{\CI'} = \{m_1^\CI, \ldots, m_{n-2}^\CI, n-1, l-1\}.
\] 
By the argument from the proof of \cite[Thm.\ 11.1]{sommerstymoczko},
the restricted arrangement 
$\CA_\CI^{H_\delta}$ is 
isomorphic to 
$\CA_\CI^{H_{\beta^+}}$ 
in the case above, where both $\beta^\pm$ were maximal in 
$\Phi^c_0 \cap \CI^c$. 
From that case we
infer that $\CA_\CI^{H_{\beta^+}}$ 
is inductively free, thus so is $\CA_\CI^{H_\delta}$ with 
\[
\exp \CA_\CI^{H_\delta} = \{m_1^\CI, \ldots, m_{n-2}^\CI, n-1\}.
\]
It follows from Theorem \ref{thm:add-del} that also in this case 
$\CA_\CI$ is inductively free.
\end{proof}

\begin{example}
\label{ex:dn}
Theorem \ref{thm:I1I}(i) can also be used to 
show that there are arrangements of ideal type 
which are supersolvable in type 
$D_n$ beyond the cases treated in Theorem \ref{thm:d4ss}(i).
For instance let $\CI$ be the ideal generated by
$e_{n-3} + e_n$,
$e_{n-3} - e_n$, or  $e_{n-2} + e_{n-1}$.
Let $\Phi_0$ be the standard 
subsystem of type $A_{n-1}$ so that 
the generator of $\CI$ does not belong to $\Phi_0$
(there are two choices in the last instance).
Then one easily checks that 
Condition \ref{cond:linear} is satisfied. 
The fact that $\CA_\CI$ is supersolvable then follows 
from Theorems \ref{thm:AnBn} and \ref{thm:I1I}(i).
\end{example}

Next we give a uniform argument for the penultimate ideal 
$\CI_{h-1} = \{\theta\}$ in all cases.

\begin{proof}[Proof of Theorem \ref{thm:penultimate}]
Let $\CA = \CA(W)$ be the Weyl arrangement of $W$.
Consider the triple 
$(\CA, \CA', \CA'')$
with respect to $H_\theta$. 
In case there is only one root length
$W$ is transitive on $\Phi$. 
In the other instances 
(i.e.\ for $B_n$ and $F_4$)
the restrictions of $\CA$ to hyperplanes with respect 
to a short and long root are isomorphic,
cf.~\cite[Prop.\ 6.82, Table C.9]{orlikterao:arrangements}.
Therefore, since $\CA$ is inductively free, 
$(\CA, \CA', \CA'')$ is a triple of inductively free arrangements.
In particular, 
$\CA' = \CA_{\CI_{h-1}}$ is inductively free.
\end{proof}

\begin{remark}
\label{rem:linearexceptional}
Lemma \ref{lem:linear} and Theorem \ref{thm:I1I}(ii)
can also be employed in the exceptional instances as well.
Suppose $\CI$ is an ideal in $\Phi^+$ such that
Condition \ref{cond:linear} is fulfilled
for some fixed choice of a maximal parabolic subsystem $\Phi_0$.
Let $\delta$ be the unique root of maximal 
height in $\Phi^c_0 \cap \CI^c$
and $\CI_0 = \CI \cap \Phi_0^+$.
Consider the restriction of 
$\CA_\CI$ with respect to $H_\delta$.
In our applications in Section \ref{S:indfree} 
we argue by induction 
on the rank of the underlying root system, 
so that the arrangement of ideal type $\CA_{\CI_0}$ 
of smaller rank is inductively free.
Then using Lemma \ref{lem:linear}, 
Theorems \ref{thm:AnBn} and \ref{thm:Dn}, it follows that 
$\CA_\CI^{H_\delta} \cong \CA_{\CI_0}$ is inductively 
free. It thus follows from 
Theorem \ref{thm:I1I}(ii)
that also $\CA_\CI$ is inductively free with the desired exponents.
We illustrate this in the following example. 
\end{remark}

\begin{example}
\label{ex:f4}
Let $\Phi$ be of type $F_4$.
We use the notation for the roots in $\Phi$ as in 
\cite[Planche VIII]{bourbaki:groupes}. 
We consider two examples of ideals.
In our first case 
let $\CI$ be the ideal generated by $1120$ and $0122$.
Let $\Phi_0$ be the standard subsystem of type $C_3$.
Then 
$\CI^c = \{\underline{1000}, 0100, 0010, 0001, 
\underline{1100},0110, 0011, 
\underline{1110}, 
0120, 0111, \underline{1111}, 0121\}$,
where the roots in 
$\Phi^c_0 \cap \CI^c$ are underlined.
It is easy to check that $\CI$ and $\Phi_0$ 
satisfy Condition \ref{cond:linear}.

In our second example
let $\CI$ be the ideal generated by $0121$.
This time let $\Phi_0$ be the standard subsystem of type $B_3$.
Then 
$\CI^c = \{1000, 0100, 0010, \underline{0001}, 
1100,0110, \underline{0011}, 1110$, 
$0120, \underline{0111}, 1120, \underline{1111}, 1220\}$,
where the roots in 
$\Phi^c_0 \cap \CI^c$ are underlined.
It is again 
easy to check that $\CI$ and $\Phi_0$ 
satisfy Condition \ref{cond:linear}.

Here the subsystems used must not be interchanged;  
for $\Phi_0$ of type $B_3$, the first ideal
does not satisfy Condition \ref{cond:linear}
and likewise  neither does the second 
for $\Phi_0$ of type $C_3$.

It follows from Theorems \ref{thm:AnBn} and 
\ref{thm:I1I}(i) that both $\CA_\CI$ are supersolvable
and that the exponents are $\{1,3,4,4\}$ and  
$\{1,3,4,5\}$, respectively; cf.~Example \ref{ex:typeA}.
\end{example}

We investigate the inductively free $\CA_\CI$ in 
the exceptional instances in more  
detail in \S \ref{S:indfree}.

\section{Inductively free $\CA_\CI$ for $\Phi$ of exceptional type}
\label{S:indfree}

\subsection{}
\label{ss:41}
Thanks to Proposition \ref{prop:product-indfree}, 
Theorem \ref{thm:indfree-exceptional} 
readily reduces to the case when $\Phi$ is irreducible
which we assume from now on.
Let $\CA = \CA(\Phi)$ be the Weyl arrangement 
for $\Phi$ irreducible of exceptional type. 
Since any arrangement of rank at most two is inductively free, 
we may suppose that $W$ has rank at least $4$.

We use the labeling of the Dynkin diagram of $\Phi$ and the 
notation 
for roots in $\Phi$ from \cite[Planche V - VIII]{bourbaki:groupes}.
We argue by induction on the rank of the underlying 
root system and therefore assume
that each arrangement of ideal type
is inductively free for root systems of smaller rank.
So fix $\Phi$ and let $\CI$ be an ideal in $\Phi^+$. 
Arguing further by
induction on $|\CI^c|$, we may assume that 
$\CA_{\CJ}$ is inductively free for every ideal $\CJ$
properly containing $\CI$. 

\subsection{}
\label{ss:numberIt}
Our strategy is to consider all ideals $\CI$ such
that $\CI \subseteq \CI_t$ but $\CI \not \subseteq \CI_{t+1}$
for successive values of $t \ge 1$.
This means each such ideal contains a root of height $t$
but no roots of smaller height.
Then we determine all instances when 
Condition \ref{cond:linear}
is satisfied for each such $\CI$ for a suitable 
choice of subsystem $\Phi_0$ of $\Phi$ 
in Tables \ref{table:height3} -- \ref{table:height6}
below. In each of these tables  
we list for a given root $\beta$ in $\CI$ of height $t$
a parabolic subsystem $\Phi_0$ of $\Phi$ and the resulting 
set of roots in $\Phi^c_0 \cap \CI^c$ 
relevant for Condition \ref{cond:linear}.
Here we determine $\Phi^c_0 \cap \CI^c$
under the assumption that 
$\beta$ is the only root 
of height $t$ in $\CI$.
In case there are additional roots 
of height $t$ in $\CI$, 
the set $\Phi^c_0 \cap \CI^c$
may be smaller but still satisfies 
Condition \ref{cond:linear}.
In each case we know by 
induction that $\CA_{\CI_0}$ is inductively 
free, so that we can conclude from 
Theorem \ref{thm:I1I}(ii) that $\CA_\CI$
is also inductively free.

For instance, 
consider the next to last entry for $E_7$ in 
Table \ref{table:height3}.
Here 
$\CI = \left\langle\stackrel{001110}{_{0\ }}\right\rangle$,
and for $\Phi_0$ of type $E_6$ it follows that 
$\CI_0 = \left\langle\stackrel{00111}{_{0}}\right\rangle$.
The fact that  $\CA_{\CI_0}$ is inductively 
free follows from the last row for $E_6$ of the same table.
So $\CA_\CI$ is inductively free by induction
and Theorem \ref{thm:I1I}(ii).
For another example, consider the last case for 
for $E_7$ in Table \ref{table:height3}.
Here 
$\CI = \left\langle\stackrel{000111}{_{0\ }}\right\rangle$,
and for $\Phi_0$ of type $E_6$ we have 
$\CI_0^c = \Phi_0^+$, so that 
$\CA_{\CI_0} = \CA(E_6)$ which is inductively free.
So then again 
$\CA_\CI$ is inductively free thanks to Theorem \ref{thm:I1I}(ii).

\subsection{}
\label{ss:lastentry}
Returning to Table \ref{table:noI}, 
the last entry in each column 
above the last row labeled by $*$
indicates that for every ideal $\CI$ with 
$\CI \subseteq \CI_t$, 
there is no maximal parabolic subsystem 
$\Phi_0$ so that 
Condition \ref{cond:linear} is satisfied.
Naturally, if $\CI$ is small, then $\CI^c$ is large.
So that for sufficiently small $\CI$ 
Condition \ref{cond:linear} fails simply because
$\Phi^c_0 \cap \CI^c$ is no longer linear for any 
choice of maximal subsystem $\Phi_0$.
So for instance, if $\Phi$ is of type $F_4$,
then this is the case for all
$\CI$ whose roots have height at
least $5$, and according to 
the entry in Table \ref{table:noI}, there
are exactly $22$ such instances.
The final row labeled by $*$ 
gives the total number of all 
ideals $\CI$ where 
Condition \ref{cond:linear} fails for 
every choice of a maximal subsystem $\Phi_0$
of $\Phi$. 
From the list of these cases we have 
removed the $2$ instances corresponding 
to the full Weyl arrangement $\CA_{\CI_h} =\CA(\Phi)$
and the case of the penultimate ideal $\CA_{\CI_{h-1}}$
which are both inductively free.

\subsection{}
\label{ss:notI2}
Now  if $\CI \cap \Pi \neq \varnothing$, i.e.\ if $\CI$ is not
strictly positive, then $\CI^c$ is either 
the complement of an ideal in a 
smaller rank root system or is the product of complements 
of ideals in direct products 
of smaller rank root systems. It therefore 
follows from our induction hypothesis 
and Proposition \ref{prop:product-indfree}
that also $\CA_\CI$ is inductively free in this instance.
We may therefore assume that $\CI$ is strictly positive, 
i.e.\ $\CI \subseteq \CI_2$.
Observe that $\CI_2^c = \Pi$ and so 
$\CA_{\CI_2}$ is inductively free, 
see Example \ref{ex:ht1}.

\subsection{}
\label{ss:I2notI3}
Next suppose that $\CI \subseteq \CI_2$ and 
$\CI \not\subseteq \CI_3$, i.e.\ $\CI$ contains a root of height $2$.
Then $\CA_\CI$ is again the product of 
two arrangements of ideal type of smaller rank root systems
and so the result follows from our induction hypothesis 
and Proposition \ref{prop:product-indfree}.
We therefore may assume that $\CI \subseteq \CI_3$.


\subsection{}
\label{ss:I3notI4}
Next we consider the case when 
$\CI \subseteq \CI_3$ and $\CI \not\subseteq \CI_4$.
Then there is a root of height $3$ in $\CI$
but no root of smaller height.
One readily checks that there is always a suitable maximal
subsystem $\Phi_0$ in each case so that 
Condition \ref{cond:linear} is satisfied.
We list the various cases in Table \ref{table:height3}
below. 
For each fixed root $\beta$ of height $3$ we consider 
the case $\CI = \langle \beta \rangle$ and 
list a suitable subsystem $\Phi_0$ such that Condition \ref{cond:linear} 
is fulfilled. This is easily checked in each instance.
In type $E_6$ there are two standard subsystems of type $D_5$ which are 
conjugate by the graph automorphism of $E_6$.
In order to distinguish between them, here and later on, 
$D_5$, respectively $D_5'$, denotes the 
one complementary to $\alpha_1$, respectively $\alpha_6$. 

\begin{table}[ht!b]
\renewcommand{\arraystretch}{1.6}
\begin{tabular}{r|c|c|l}\hline
$\Phi$ &  $\beta \in \CI$ & $\Phi_0$ & $\Phi^c_0 \cap \CI^c$ \\
\hline\hline
$F_4$ 
& $1110$ &  $C_3$ & $1000, 1100$ \\
& $0120$ &  $C_3$ & $1000, 1100, 1110, 1111$  \\
& $0111$ &  $B_3$ & $0001, 0011$ \\
\hline
$E_6$ 
&  $\stackrel{11100}{_{0}}$ & $D_5$ &  $ \stackrel{10000}{_{0}}, \stackrel{11000}{_{0}} $ \\
& $\stackrel{01100}{_{1}}$ & $A_5$ &  $ \stackrel{00000}{_{1}}, \stackrel{00100}{_{1}}, \stackrel{00110}{_{1}}, \stackrel{00111}{_{1}} $ \\
& $\stackrel{01110}{_{0}}$ & $D_5$ &  $ \stackrel{10000}{_{0}}, \stackrel{11000}{_{0}}, \stackrel{11100}{_{0}}, \stackrel{11100}{_{1}} $ \\
& $\stackrel{00110}{_{1}}$ & $A_5$  &  $ \stackrel{00000}{_{1}}, \stackrel{00100}{_{1}}, \stackrel{01100}{_{1}}, \stackrel{11100}{_{1}} $ \\
& $\stackrel{00111}{_{0}}$ & $D_5'$ & $ \stackrel{00001}{_{0}}, \stackrel{00011}{_{0}} $ \\
\hline
$E_7$ 
&  $\stackrel{111000}{_{0\ }}$ & $D_6$ &  $ \stackrel{100000}{_{0\ }}, \stackrel{110000}{_{0\ }} $ \\
& $\stackrel{011000}{_{1\ }}$ & $A_6$ &  $ \stackrel{000000}{_{1\ }}, \stackrel{001000}{_{1\ }}, \stackrel{001100}{_{1\ }}, \stackrel{001110}{_{1\ }}, \stackrel{001111}{_{1\ }} $ \\
& $\stackrel{011100}{_{0\ }}$ & $D_6$ &  $ \stackrel{100000}{_{0\ }}, \stackrel{110000}{_{0\ }}, \stackrel{111000}{_{0\ }}, \stackrel{111000}{_{1\ }} $ \\
& $\stackrel{001100}{_{1\ }}$ & $A_6$  &  $ \stackrel{000000}{_{1\ }}, \stackrel{001000}{_{1\ }}, \stackrel{011000}{_{1\ }}, \stackrel{111000}{_{1\ }} $ \\
& $\stackrel{001110}{_{0\ }}$ & $E_6$ & $ \stackrel{000001}{_{0\ }}, \stackrel{000011}{_{0\ }}, \stackrel{000111}{_{0\ }} $ \\
& $\stackrel{000111}{_{0\ }}$ & $E_6$ & $ \stackrel{000001}{_{0\ }}, \stackrel{000011}{_{0\ }} $ \\
\hline
$E_8$
&  $\stackrel{1110000}{_{0\ \ }}$ & $D_7$ &  $ \stackrel{1000000}{_{0\ \ }}, \stackrel{1100000}{_{0\ \ }} $ \\
& $\stackrel{0110000}{_{1\ \ }}$ & $A_7$ &  $ \stackrel{0000000}{_{1\ \ }}, \stackrel{0010000}{_{1\ \ }}, \stackrel{0011000}{_{1\ \ }}, \stackrel{0011100}{_{1\ \ }}, \stackrel{0011110}{_{1\ \ }}, \stackrel{0011111}{_{1\ \ }} $ \\
& $\stackrel{0111000}{_{0\ \ }}$ & $D_7$ &  $ \stackrel{1000000}{_{0\ \ }}, \stackrel{1100000}{_{0\ \ }}, \stackrel{1110000}{_{0\ \ }}, \stackrel{1110000}{_{1\ \ }} $ \\
& $\stackrel{0011000}{_{1\ \ }}$ & $A_7$  &  $ \stackrel{0000000}{_{1\ \ }}, \stackrel{0010000}{_{1\ \ }}, \stackrel{0110000}{_{1\ \ }}, \stackrel{1110000}{_{1\ \ }} $ \\
& $\stackrel{0011100}{_{0\ \ }}$ & $E_7$ & $ \stackrel{0000001}{_{0\ \ }}, \stackrel{0000011}{_{0\ \ }}, \stackrel{0000111}{_{0\ \ }}, \stackrel{0001111}{_{0\ \ }} $ \\
& $\stackrel{0001110}{_{0\ \ }}$ & $E_7$ & $ \stackrel{0000001}{_{0\ \ }}, \stackrel{0000011}{_{0\ \ }}, \stackrel{0000111}{_{0\ \ }} $ \\
& $\stackrel{0000111}{_{0\ \ }}$ & $E_7$ & $ \stackrel{0000001}{_{0\ \ }}, \stackrel{0000011}{_{0\ \ }} $ \\
\hline
\end{tabular}
\smallskip
\caption{Condition \ref{cond:linear} for 
$\CI \subseteq \CI_3$ and $\CI \not\subseteq \CI_4$}
\label{table:height3}
\end{table}

We should point out that in case $\CI$ contains additional roots
of height $3$, then Condition \ref{cond:linear}  is still fulfilled
albeit $\Phi^c_0 \cap \CI^c$ may be a proper subset of 
the one shown in Table \ref{table:height3}.
For instance, if $\Phi$ is of type $E_6$ and 
$\CI = \left\langle\stackrel{01100}{_{1}}, \stackrel{00110}{_{1}}\right\rangle$,
then for $\Phi_0$ of type $A_5$ as in 
the second row for $E_6$ in Table \ref{table:height3}, 
$\Phi^c_0 \cap \CI^c$ only consists of 
$\alpha_2$ and $\alpha_2 + \alpha_4$.

It follows by induction on the rank and the results from 
\S\S \ref{ss:notI2} - \ref{ss:I3notI4} that $\CA_\CI$ is 
inductively free for any $\CI$ with $\CI \not\subseteq \CI_4$.

\subsection{}
\label{ss:I4notI5}
Next, we consider the case when 
$\CI \subseteq \CI_4$ and $\CI \not\subseteq \CI_5$.
Then there is a root of height $4$ in $\CI$
but no root of smaller height.
In Table \ref{table:height4},
for each fixed root $\beta$ of height $4$ we first consider 
the case $\CI = \langle \beta \rangle$ and --
provided there exists one -- 
list a suitable maximal parabolic 
subsystem $\Phi_0$ such that Condition \ref{cond:linear} holds.
This is then easy to verify in each instance.

\begin{table}[ht!b]
\renewcommand{\arraystretch}{1.6}
\begin{tabular}{r|c|c|l}\hline
$\Phi$ &  $\beta \in \CI$ & $\Phi_0$ & $\Phi^c_0 \cap \CI^c$ \\
\hline\hline
$F_4$ 
& $1111$ &  $C_3$ & $1000, 1100, 1110, 1120, 1220$ \\
& $1120$ &  $C_3$ & $1000, 1100, 1110, 1111$  \\
& $0121$ &  $B_3$ & $0001, 0011, 0111, 1111$ \\
\hline
$E_6$ 
&  $\stackrel{11110}{_{0}}$ & $D_5$ &  $ \stackrel{10000}{_{0}}, \stackrel{11000}{_{0}}, \stackrel{11100}{_{0}}, \stackrel{11100}{_{1}} $ \\
& $\stackrel{11100}{_{1}}$ & $D_5$ &  $ \stackrel{10000}{_{0}}, \stackrel{11000}{_{0}}, \stackrel{11100}{_{0}}, \stackrel{11110}{_{0}}, \stackrel{11111}{_{0}} $ \\
& $\stackrel{01110}{_{1}}$ & $\times$ &  $\times$ \\
& $\stackrel{00111}{_{1}}$ & $D_5'$  &  $ \stackrel{00001}{_{0}}, \stackrel{00011}{_{0}}, \stackrel{00111}{_{0}}, \stackrel{01111}{_{0}}, \stackrel{11111}{_{0}} $ \\
& $\stackrel{01111}{_{0}}$ & $D_5'$ & $ \stackrel{00001}{_{0}}, \stackrel{00011}{_{0}}, \stackrel{00111}{_{0}}, \stackrel{00111}{_{1}} $ \\
\hline
$E_7$ 
&  $\stackrel{111100}{_{0\ }}$ & $D_6$ &  $ \stackrel{100000}{_{0\ }}, \stackrel{110000}{_{0\ }}, \stackrel{111000}{_{0\ }}, \stackrel{111000}{_{1\ }} $ \\
& $\stackrel{111000}{_{1\ }}$ & $D_6$ &  $ \stackrel{100000}{_{0\ }}, \stackrel{110000}{_{0\ }}, \stackrel{111000}{_{0\ }}, \stackrel{111100}{_{0\ }}, \stackrel{111110}{_{0\ }}, \stackrel{111111}{_{0\ }} $ \\
& $\stackrel{011100}{_{1\ }}$ & $\times$ &  $\times$ \\
& $\stackrel{011110}{_{0\ }}$ & $E_6$ & $ \stackrel{000001}{_{0\ }}, \stackrel{000011}{_{0\ }}, \stackrel{000111}{_{0\ }}, \stackrel{001111}{_{0\ }}, \stackrel{001111}{_{1\ }} $ \\
& $\stackrel{001110}{_{1\ }}$ & $E_6$  & $ \stackrel{000001}{_{0\ }}, \stackrel{000011}{_{0\ }}, \stackrel{000111}{_{0\ }}, \stackrel{001111}{_{0\ }}, \stackrel{011111}{_{0\ }}, \stackrel{111111}{_{0\ }} $ \\  
& $\stackrel{001111}{_{0\ }}$ & $E_6$ & $ \stackrel{000001}{_{0\ }}, \stackrel{000011}{_{0\ }}, \stackrel{000111}{_{0\ }} $ \\
\hline
$E_8$
&  $\stackrel{1111000}{_{0\ \ }}$ & 
$D_7$ &  $ \stackrel{1000000}{_{0\ \ }}, \stackrel{1100000}{_{0\ \ }}, \stackrel{1110000}{_{0\ \ }}, \stackrel{1110000}{_{1\ \ }} $ \\
& $\stackrel{1110000}{_{1\ \ }}$ &
$D_7$ &  $ \stackrel{1000000}{_{0\ \ }}, \stackrel{1100000}{_{0\ \ }}, \stackrel{1110000}{_{0\ \ }}, \stackrel{1111000}{_{0\ \ }}, \stackrel{1111100}{_{0\ \ }}, \stackrel{1111110}{_{0\ \ }}, \stackrel{1111111}{_{0\ \ }} $ \\
& $\stackrel{0111000}{_{1\ \ }}$ & $\times$ &  $\times$ \\
& $\stackrel{0011100}{_{1\ \ }}$ & 
$E_7$ & $ \stackrel{0000001}{_{0\ \ }}, \stackrel{0000011}{_{0\ \ }}, \stackrel{0000111}{_{0\ \ }}, \stackrel{0001111}{_{0\ \ }}, \stackrel{0011111}{_{0\ \ }}, \stackrel{0111111}{_{0\ \ }}, \stackrel{1111111}{_{0\ \ }} $ \\
& $\stackrel{0111100}{_{0\ \ }}$ & 
$E_7$ & $ \stackrel{0000001}{_{0\ \ }}, \stackrel{0000011}{_{0\ \ }}, \stackrel{0000111}{_{0\ \ }}, \stackrel{0001111}{_{0\ \ }}, \stackrel{0011111}{_{0\ \ }}, \stackrel{0011111}{_{1\ \ }} $ \\
& $\stackrel{0011110}{_{0\ \ }}$ & $E_7$ & $ \stackrel{0000001}{_{0\ \ }}, \stackrel{0000011}{_{0\ \ }}, \stackrel{0000111}{_{0\ \ }}, \stackrel{0001111}{_{0\ \ }} $ \\
& $\stackrel{0001111}{_{0\ \ }}$ & $E_7$ & $ \stackrel{0000001}{_{0\ \ }}, \stackrel{0000011}{_{0\ \ }}, \stackrel{0000111}{_{0\ \ }} $ \\
\hline
\end{tabular}
\smallskip
\caption{Condition \ref{cond:linear} for 
$\CI \subseteq \CI_4$ and $\CI \not\subseteq \CI_5$}
\label{table:height4}
\end{table}

There are some ideals $\CI$ of this kind
when there is no maximal standard
parabolic subsystem $\Phi_0$ such that Condition \ref{cond:linear}
is fulfilled. This is indicated with the label 
``$\times$'' in the corresponding row.
For instance, this occurs when $\Phi$ is of type $E_6$ and 
$\CI = \left\langle\stackrel{01110}{_{1}}\right\rangle$.
If there are additional roots of height $4$ in $\CI$, then 
we can ensure again that Condition \ref{cond:linear} holds.
E.g., let 
$\CI = \left\langle\stackrel{11110}{_{0}}, \stackrel{01110}{_{1}}\right\rangle$.
Then for $\Phi_0$ of type $D_5$, 
$\Phi^c_0 \cap \CI^c$ is the same set as in the first row for $E_6$
in Table \ref{table:height4}. 
One checks that 
$\CI = \left\langle\stackrel{01110}{_{1}}\right\rangle$
and 
$\CI = \left\langle\stackrel{01110}{_{1}}, \stackrel{11111}{_{0}}\right\rangle$
are the only ideals of this nature for which 
Condition \ref{cond:linear} fails for any choice of 
a maximal standard subsystem $\Phi_0$ of $\Phi$.
Likewise, for $\Phi$ of type $E_7$,
respectively of type $E_8$,  
there are $3$, respectively $8$, ideals of this kind
for which Condition \ref{cond:linear} does not hold for any choice of 
a maximal standard parabolic subsystem $\Phi_0$ of $\Phi$.
For instance, the ones in type $E_7$ are 
$\CI = \left\langle\stackrel{011100}{_{1\ }}\right\rangle$, 
$\CI = \left\langle\stackrel{011100}{_{1\ }}, \stackrel{111110}{_{0\ }}\right\rangle$,
and
$\CI = \left\langle\stackrel{011100}{_{1\ }}, \stackrel{111111}{_{0\ }}\right\rangle$.

\subsection{}
\label{ss:I5}
Next, we consider the case when $\CI \subseteq \CI_5$.
In case of $F_4$ and $E_6$ one readily checks that 
there is no maximal standard parabolic subsystem $\Phi_0$ such that
Condition \ref{cond:linear} is satisfied.

It follows from Remark \ref{rem:linearexceptional}, 
Theorem \ref{thm:penultimate}, 
\cite[Cor.\ 5.15]{barakatcuntz:indfree}, and 
the data in Tables \ref{table:height3} and \ref{table:height4}, 
that for $F_4$ and $E_6$, there are at most 
$20$, respectively $62$, $\CA_\CI$
which might fail to be inductively free. 
This number is listed in the last row of 
Table \ref{table:noI} labelled by $*$.

\subsection{}
\label{ss:I5notI6}
We continue by 
considering the case when 
$\CI \subseteq \CI_5$ and $\CI \not\subseteq \CI_6$
for $E_7$ and $E_8$.
Then there is a root of height $5$ in $\CI$
but no root of smaller height.
We list all cases when 
Condition \ref{cond:linear} is satisfied in 
Table \ref{table:height5} below.
The notation is as in the previous tables.
In particular, as before, ``$\times$'' indicates
that Condition \ref{cond:linear} fails for any choice of 
maximal parabolic subsystem.

\begin{table}[ht!b]
\renewcommand{\arraystretch}{1.6}
\begin{tabular}{r|c|c|l}\hline
$\Phi$ &  $\beta \in \CI$ & $\Phi_0$ & $\Phi^c_0 \cap \CI^c$ \\
\hline\hline
$E_7$ 
&  $\stackrel{111110}{_{0\ }}$ & $\times$ &  $\times$ \\
& $\stackrel{111100}{_{1\ }}$ & $\times$ &  $\times$ \\
& $\stackrel{011110}{_{1\ }}$ & $\times$ &  $\times$ \\
& $\stackrel{012100}{_{1\ }}$ & $\times$ &  $\times$ \\
& $\stackrel{011111}{_{0\ }}$ & $E_6$ & $ \stackrel{000001}{_{0\ }}, \stackrel{000011}{_{0\ }}, \stackrel{000111}{_{0\ }}, \stackrel{001111}{_{0\ }}, \stackrel{001111}{_{1\ }} $ \\
& $\stackrel{001111}{_{1\ }}$ & $E_6$  & $ \stackrel{000001}{_{0\ }}, \stackrel{000011}{_{0\ }}, \stackrel{000111}{_{0\ }}, \stackrel{001111}{_{0\ }}, \stackrel{011111}{_{0\ }}, \stackrel{111111}{_{0\ }} $ \\  
\hline
$E_8$
&  $\stackrel{1111100}{_{0\ \ }}$ & $\times$ &  $\times$ \\
& $\stackrel{1111000}{_{1\ \ }}$ & $\times$ &  $\times$ \\
& $\stackrel{0111100}{_{1\ \ }}$ & $\times$ &  $\times$ \\
& $\stackrel{0121000}{_{1\ \ }}$ & $\times$ &  $\times$ \\
& $\stackrel{0011110}{_{1\ \ }}$ & 
$E_7$ & $ \stackrel{0000001}{_{0\ \ }}, \stackrel{0000011}{_{0\ \ }}, \stackrel{0000111}{_{0\ \ }}, \stackrel{0001111}{_{0\ \ }}, \stackrel{0011111}{_{0\ \ }}, \stackrel{0111111}{_{0\ \ }}, \stackrel{1111111}{_{0\ \ }} $ \\
& $\stackrel{0111110}{_{0\ \ }}$ & 
$E_7$ & $ \stackrel{0000001}{_{0\ \ }}, \stackrel{0000011}{_{0\ \ }}, \stackrel{0000111}{_{0\ \ }}, \stackrel{0001111}{_{0\ \ }}, \stackrel{0011111}{_{0\ \ }}, \stackrel{0011111}{_{1\ \ }} $ \\
& $\stackrel{0011111}{_{0\ \ }}$ & 
$E_7$ & $ \stackrel{0000001}{_{0\ \ }}, \stackrel{0000011}{_{0\ \ }}, \stackrel{0000111}{_{0\ \ }}, \stackrel{0001111}{_{0\ \ }} $ \\
\hline
\end{tabular}
\smallskip
\caption{Condition \ref{cond:linear} for 
$\CI \subseteq \CI_5$ and $\CI \not\subseteq \CI_6$}
\label{table:height5}
\end{table}

If $\CI$ is an ideal as above,  
containing more than one root $\beta$ of height
$5$ so that 
already for the smaller ideal $\langle \beta \rangle$ 
there is no 
parabolic subsystem $\Phi_0$ which satisfies Condition \ref{cond:linear},
then it is readily checked that 
this is also the case for the larger ideal $\CI$.
On the other hand if $\CI$ is an ideal which contains 
roots $\beta$ of height $5$ 
so that Condition \ref{cond:linear} holds 
for the ideal $\langle \beta \rangle$ for 
some such $\beta$ and for some choice of  
maximal parabolic subsystem $\Phi_0$, 
then Condition \ref{cond:linear} also holds 
for the larger ideal $\CI$ and the same $\Phi_0$.

For instance, consider the ideal 
$\CI = \left\langle\stackrel{111110}{_{0\ }}, \stackrel{001111}{_{1\ }}\right\rangle$
(cf.~the first and last rows of the cases for 
$E_7$ in Table \ref{table:height5}).
We see that for $\Phi_0$ of type $E_6$ 
Condition \ref{cond:linear} holds and 
$\Phi^c_0 \cap \CI^c = 
\left\{\stackrel{000001}{_{0\ }}, \stackrel{000011}{_{0\ }}, 
\stackrel{000111}{_{0\ }}, \stackrel{001111}{_{0\ }}, \stackrel{011111}{_{0\ }}\right\}$.

For $\Phi$ of type $E_7$,
respectively of type $E_8$,  
there are $323$, respectively $355$, ideals of this kind
for which Condition \ref{cond:linear} does not hold for any choice of 
a maximal standard parabolic subsystem $\Phi_0$ of $\Phi$.

\subsection{}
\label{ss:I6}
Next, we consider the case when $\CI \subseteq \CI_6$.
One checks that in case of $E_7$ 
there is no maximal parabolic subsystem $\Phi_0$ such that
Condition \ref{cond:linear} is satisfied.
For $E_8$ there is only one case of such an ideal $\CI$.
This is listed in Table \ref{table:height6} below.
Note that for $E_8$ one checks that if $\CI \subseteq \CI_7$,
then again there is no maximal standard parabolic subsystem $\Phi_0$ such that
Condition \ref{cond:linear} is satisfied.

For $\Phi$ of type $E_7$,
respectively of type $E_8$,  
there are $403$, respectively $1317$, ideals of this kind
for which Condition \ref{cond:linear} does not hold for any choice of 
a maximal standard parabolic subsystem $\Phi_0$ of $\Phi$.

\begin{table}[ht!b]
\renewcommand{\arraystretch}{1.6}
\begin{tabular}{r|c|c|l}\hline
$\Phi$ &  $\beta \in \CI$ & $\Phi_0$ & $\Phi^c_0 \cap \CI^c$ \\
\hline\hline
$E_8$
&  $\stackrel{1111110}{_{0\ \ }}$ & $\times$ &  $\times$ \\
& $\stackrel{1111100}{_{1\ \ }}$ & $\times$ &  $\times$ \\
& $\stackrel{0111110}{_{1\ \ }}$ & $\times$ &  $\times$ \\
& $\stackrel{1121000}{_{1\ \ }}$ & $\times$ &  $\times$ \\
& $\stackrel{0121100}{_{1\ \ }}$ & $\times$ &  $\times$ \\
& $\stackrel{0111111}{_{0\ \ }}$ & 
$E_7$ & $\stackrel{0000001}{_{0\ \ }}, \stackrel{0000011}{_{0\ \ }}, 
\stackrel{0000111}{_{0\ \ }}, \stackrel{0001111}{_{0\ \ }}, 
\stackrel{0011111}{_{0\ \ }}, \stackrel{0011111}{_{1\ \ }}$ \\
\hline
\end{tabular}
\smallskip
\caption{Condition \ref{cond:linear} for 
$\CI \subseteq \CI_6$ and $\CI \not\subseteq \CI_7$}
\label{table:height6}
\end{table}

\subsection{}
\label{ss:summary}
Finally, by induction on the rank and thanks to 
Theorem \ref{thm:penultimate} and 
\cite[Cor.\ 5.15]{barakatcuntz:indfree} along with 
the results from 
\S \S \ref{ss:notI2} -- \ref{ss:I6}, including  
the data in Tables \ref{table:height3} -- \ref{table:height6}, 
it follows 
that for $E_7$, respectively $E_8$, there are at most 
$403 + 323 + 3 - 2 = 727$, 
respectively $4500 + 1317 + 355 +8 - 2 = 6178$ 
arrangements of ideal type $\CA_\CI$
which might fail to be inductively free. 
This number of instances is listed in the last row of 
Table \ref{table:noI} labelled by $*$. 

Finally, 
\cite[Cor.\ 5.15]{barakatcuntz:indfree} along with 
Theorem \ref{thm:penultimate} complete the proof 
Theorem \ref{thm:indfree-exceptional}. 

\begin{remark}
\label{rem:computed}
Above we have dealt with all instances
when Condition \ref{cond:linear} 
holds in the exceptional types.
As observed this leads to a surprisingly small 
number of ideals left to be considered further. 
This number is shown in the last row of Table  \ref{table:noI} above
labeled with $*$.
Here we have already taken the cases 
$\CI_{h-1} = \{\theta\}$ and $\CI_h = \varnothing$ into account as well
which give inductively free 
arrangements of ideal type, by  
Theorem \ref{thm:penultimate} and 
\cite[Cor.\ 5.15]{barakatcuntz:indfree}.

For each of these remaining instances 
other than the outstanding $6178$ for type $E_8$, T.~Hoge was able to 
check that $\CA_\CI$ 
is inductively free by computational means.
\end{remark}

Finally Theorem \ref{thm:indfree} follows from 
Theorem \ref{thm:indfree-exceptional} and Remark \ref{rem:computed}.

\section{The Poincar\'e polynomial  $\CI(t)$ of $\CI$}
\label{sec:poincare}

\subsection{Rank-generating functions of posets of regions of real arrangements}
\label{s:rankgenerating}

In this subsection let $\CA$ be a 
hyperplane arrangement in the real vector space $V=\BBR^\ell$. 
A \emph{region} of $\CA$ is a connected component of the 
complement $M(\CA) := V \setminus \cup_{H \in \CA}H$ of $\CA$.
Let $\RR := \RR(\CA)$ be the set of regions of $\CA$.
For $R, R' \in \RR$, we let $\CS(R,R')$ denote the 
set of hyperplanes in $\CA$ separating $R$ and $R'$.
Then with respect to a choice of a fixed 
base region $B$ in $\RR$, we can partially order
$\RR$ as follows:
\[
R \le R' \quad \text{ if } \quad \CS(B,R) \subseteq \CS(B,R').
\]
Endowed with this partial order, we call $\RR$ the
\emph{poset of regions of $\CA$ (with respect to $B$)} and denote it by
$P(\CA, B)$. This is a ranked poset of finite rank,
where $\rk(R) := |\CS(B,R)|$, for $R$ a region of $\CA$, 
\cite[Prop.\ 1.1]{edelman:regions}.
The \emph{rank-generating function} of $P(\CA, B)$ is 
defined to be the following polynomial in 
$\BBZ_{\ge 0}[t]$
\begin{equation*}
\label{eq:rankgen}
\zeta(P(\CA,B), t) := \sum_{R \in \RR}t^{\rk(R)}. 
\end{equation*}
This poset along with its rank-generating function
was introduced by Edelman 
\cite{edelman:regions}.

If $\CA$ is the product of arrangements $\CA = \CA_1 \times \CA_2$
then so is the rank-generating function
of its poset of regions
\begin{equation}
\label{eq:posregprod}
\zeta(P(\CA,B), t) = \zeta(P(\CA_1,B_1), t) \cdot \zeta(P(\CA_2,B_2), t),
\end{equation}
where $B = B_1 \times B_2$, see \cite[\S 3.3]{stanley:book}.

The following theorem due to Jambu and Paris, 
\cite[Prop.\ 3.4, Thm.\ 6.1]{jambuparis:factored},
was first shown by Bj\"orner, Edelman and Ziegler
for $\CA$ supersolvable in \cite[Thm.\ 4.4]{bjoerneredelmanziegler}.

\begin{theorem}
\label{thm:mult-zeta}
If $\CA$ is inductively factored, then
there is a suitable choice of a base region $B$ so that 
$\zeta(P(\CA,B), t)$ satisfies the multiplicative formula
\begin{equation}
\label{eq:poinprod}
\zeta(P(\CA,B), t) = \prod_{i=1}^\ell (1 + t + \ldots + t^{e_i}),
\end{equation}
where $\{e_1, \ldots, e_\ell\} = \exp \CA$ is the 
set of exponents of $\CA$.
\end{theorem}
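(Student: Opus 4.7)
The plan is to induct on $|\CA|$ using the inductive-factorization structure. The base case $\CA = \varnothing_\ell$ is immediate, as both sides equal $1$. For the inductive step, let $\pi = (\pi_1, \ldots, \pi_s)$ be an inductive factorization of $\CA$ and pick a distinguished hyperplane $H_0 \in \pi_1$ as in Definition~\ref{def:indfactored}(ii). Form the triple $(\CA, \CA', \CA'')$ associated with $H_0$. By Theorem~\ref{thm:add-del-factored} and Definition~\ref{def:indfactored}, the induced partitions $\pi'$ of $\CA'$ and $\pi''$ of $\CA''$ are themselves inductive factorizations; by Proposition~\ref{prop:indfactoredindfree} their exponents are respectively $\{0^{\ell-s}, |\pi_1|-1, |\pi_2|, \ldots, |\pi_s|\}$ and $\{0^{\ell-s+1}, |\pi_2|, \ldots, |\pi_s|\}$. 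The induction hypothesis then supplies the desired product factorizations for $\zeta(P(\CA', B'), t)$ and $\zeta(P(\CA'', B''), t)$ for appropriately chosen base regions.

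The crux is to choose the base region $B$ of $\CA$ adjacent to $H_0$ in a manner compatible with $\pi_1$ and to establish the recurrence
$$\zeta(P(\CA,B), t) = \zeta(P(\CA', B'), t) + t^{|\pi_1|}\, \zeta(P(\CA'', B''), t),$$
where $B'$ is the region of $\CA'$ containing $B$ and $B''$ is the region of $\CA''$ on $H_0$ bordering $B$. I would split the regions $R$ of $\CA$ into those with $H_0 \notin \CS(B, R)$, which are in rank-preserving bijection with regions of $\CA'$ (accounting for the first summand), and those with $H_0 \in \CS(B, R)$, which via $R \mapsto R \cap H_0$ should correspond bijectively to regions of $\CA''$. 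For the second family, the essential geometric claim is that $\CS(B,R) \supseteq \pi_1$ for every such $R$, so that $\rk(R) = |\pi_1| + \rk(R \cap H_0)$, yielding the $t^{|\pi_1|}$ shift.

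Establishing this separating-set containment is the main obstacle, and it is exactly where the factoredness hypothesis enters. The independence condition in Definition~\ref{def:factored}(c)(i), combined with the bijectivity of the restriction map $\R: \CA \setminus \pi_1 \to \CA''$ from Theorem~\ref{thm:add-del-factored}, should allow one to choose $B$ so that the hyperplanes of $\pi_1$ form a coherent boundary at $B$; crossing any single hyperplane of $\pi_1$ then forces a region to lie on the far side of every hyperplane of $\pi_1$. Once this recurrence is established, substituting the inductive formulas for $\zeta(P(\CA',B'),t)$ and $\zeta(P(\CA'',B''),t)$ and using the telescoping identity
$$(1+t+\ldots+t^{|\pi_1|-1}) + t^{|\pi_1|} = 1+t+\ldots+t^{|\pi_1|}$$
closes the induction, and the trivial factors corresponding to the zero exponents of a reducible $\CA$ are absorbed via \eqref{eq:posregprod}.
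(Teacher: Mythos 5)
Your proposed route — a deletion-restriction induction on $|\CA|$ driven by the inductive factorization — is genuinely different from the one the paper relies on (the paper cites Jambu--Paris, whose argument, like the Bj\"orner--Edelman--Ziegler supersolvable case captured in Lemma \ref{lem:modular-zeta}, is a fibration argument: project $P(\CA,B)$ onto $P(\CA_0,B_0)$ and show the fibres are chains of constant length). Unfortunately the key geometric claims in your argument are false, and they fail already in the simplest possible example.

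Take $\CA$ to be three concurrent lines in $\BBR^2$, say $H_1\colon y=0$, $H_2\colon x=0$, $H_3\colon y=x$. For the deletion step to be legal in Definition \ref{def:indfactored}(ii) the restriction map $\R\colon \CA\setminus\pi_1\to\CA''$ must be a bijection, which forces $\pi_1=\{H_2,H_3\}$ (so $|\pi_1|=2$) and $\pi_2=\{H_1\}$; take $H_0=H_2$. Now test each claim with base region $B$ equal to, say, the region $x>y>0$. The regions with $H_0\notin\CS(B,R)$ are the three regions in the half-plane $x>0$, contributing $1+2t$, whereas $\zeta(P(\CA',B'),t)=1+2t+t^2$ for $\CA'=\{H_1,H_3\}$: so the first claimed rank-preserving bijection does not exist. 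The region $d=\{0>x>y\}$ satisfies $\CS(B,d)=\{H_1,H_2\}$, which contains $H_0=H_2$ but not $H_3$; so the separating-set containment $\CS(B,R)\supseteq\pi_1$ fails. Consequently the ranks of the regions with $H_0\in\CS$ are $2,2,3$, contributing $2t^2+t^3$ rather than $t^{|\pi_1|}\zeta(P(\CA'',B''),t)=t^2(1+t)=t^2+t^3$; the second claimed bijection fails as well. (The two errors happen to cancel so the numerical recurrence holds in this example, but neither half of your decomposition is correct, and there is no choice of base region or of $H_0\in\pi_1$ that repairs the containment.) The underlying issue is that ``crossing one hyperplane of $\pi_1$ forces crossing all of $\pi_1$'' would require the hyperplanes of $\pi_1$ to bound $B$ in a nested, essentially parallel way, which central hyperplanes through a common point never do. The genuine mechanism in BEZ/Jambu--Paris is not a deletion-restriction on regions but the fact that the fibre of each region under the projection to $P(\CA_0,B_0)$ is a chain of length $|\pi_1|$ in the adjacency graph; compare the proof of Lemma \ref{lem:modular-zeta}, where modularity of the coatom is what makes the fibres chains. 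Your argument would need to be rebuilt around that fibration picture rather than the deletion-restriction split.
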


We recall the inductive construction from the proof of 
\cite[Thm.\ 4.4]{bjoerneredelmanziegler}.
Let $\CA$ be a real arrangement of rank $r$.
Suppose that $X \in L(\CA)$ is modular of rank $r-1$.
Let $\CA_0 := \CA_X$. 
Then there is an order-preserving, surjective map
\[
\pi : P(\CA, B) \to P(\CA_0, \pi(B)), 
\]
which is induced by inclusion of regions.
For $R \in \RR$, let $\FF(R) := \pi\inverse(\pi(R))$ 
be its \emph{fibre} under $\pi$.
We say that $B$ in $\RR$ is \emph{canonical (with respect to $\CA_0$)} 
provided $\FF(B)$ is linearly ordered within $P(\CA,B)$.
The proof of the following lemma is just the argument from 
\cite[Thm.\ 4.4]{bjoerneredelmanziegler}.

\begin{lemma}
\label{lem:modular-zeta}
Let $\CA$ be a real arrangement of rank $r$.
Suppose that $X \in L(\CA)$ is modular of rank $r-1$.
Let $\CA_0 := \CA_X$ and set $e_\ell := | \CA \setminus \CA_0|$. 
Then there is a 
region $B$ in $\RR$ which is canonical with respect to $\CA_0$. 
In addition,  setting $B_0 := \pi(B)$, we have the following:
\begin{itemize}
\item[(i)]  $\zeta(P(\CA,B), t) = \zeta(P(\CA_0,B_0), t) \cdot (1 + t + \ldots + t^{e_\ell})$.
\item[(ii)]
Suppose that 
there are non-negative integers $e_1, \ldots, e_{\ell-1}$ such that  
$\zeta(P(\CA_0,B_0), t)$ 
factors as in \eqref{eq:poinprod}.
Then $\zeta(P(\CA,B), t)$ 
also factors as in \eqref{eq:poinprod}
(with $e_\ell = | \CA \setminus \CA_0|$).
\end{itemize}
\end{lemma}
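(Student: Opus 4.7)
The plan is to adapt the argument in the proof of \cite[Thm.~4.4]{bjoerneredelmanziegler}, exploiting the modularity of $X$ to obtain a fibration-like structure on $P(\CA, B)$. The key geometric ingredients are already at hand: by Lemma~\ref{lem:modular1}, any two distinct $H_1, H_2 \in \CA \setminus \CA_0$ satisfy $H_1 \cap H_2 \subseteq H_3$ for some $H_3 \in \CA_0$; by Lemma~\ref{lem:modular2}, each restriction $\CA^H$ for $H \in \CA \setminus \CA_0$ is isomorphic to $\CA_0$ via $Y \mapsto Y \cap H$. This forces the $e_\ell$ hyperplanes of $\CA \setminus \CA_0$ into a near-pencil configuration ``above'' $\CA_0$.

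To construct a canonical base region, I first pick any region $B_0$ of $\CA_0$ and then select $B \in \FF(B_0)$ to sit at an extreme end of this pencil. Concretely, choosing a generic line $L$ inside $B_0$ transverse to $X$ meets the hyperplanes of $\CA \setminus \CA_0$ in $e_\ell$ distinct points, cutting $L \cap B_0$ into $e_\ell + 1$ consecutive segments; take $B$ to be the region of $\CA$ containing the designated endmost segment. With this choice, $\FF(B_0)$ is linearly ordered in $P(\CA, B)$ by its separator sets $\varnothing \subsetneq \{H^{(1)}\} \subsetneq \{H^{(1)}, H^{(2)}\} \subsetneq \cdots \subsetneq \CA \setminus \CA_0$, making $B$ canonical.

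For (i), I would show the same chain structure on every fibre: for each $R_0 \in \RR(\CA_0)$ the fibre $\FF(R_0)$ consists of exactly $e_\ell + 1$ regions whose ranks form the arithmetic progression $\rk(R_0), \rk(R_0)+1, \ldots, \rk(R_0) + e_\ell$. Modularity is essential here. Lemma~\ref{lem:modular1} implies that no hyperplane $H_0 \in \CA_0$ can separate two regions in the same fibre (otherwise $H_0$ would separate the two images in $\RR(\CA_0)$, contradicting $\pi(R) = \pi(R')$); hence the symmetric differences of separator sets within a fibre lie entirely in $\CA \setminus \CA_0$, and the near-pencil geometry turns these into a chain in $2^{\CA \setminus \CA_0}$ of length $e_\ell$. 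Summing over fibres gives
\[
\zeta(P(\CA,B), t) \;=\; \sum_{R_0 \in \RR(\CA_0)} t^{\rk(R_0)}(1 + t + \cdots + t^{e_\ell}) \;=\; \zeta(P(\CA_0,B_0), t) \cdot (1 + t + \cdots + t^{e_\ell}),
\]
which is (i). Part (ii) then follows immediately by substituting the assumed factorization of $\zeta(P(\CA_0, B_0), t)$ into this identity and absorbing the new factor.

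The main obstacle is the fibre analysis establishing that each $\FF(R_0)$ is a chain of exactly $e_\ell + 1$ regions whose ranks are consecutive. This uses both the modularity criterion of Lemma~\ref{lem:modular1} (to rule out $\CA_0$-hyperplanes separating pairs in a fibre) and a convex-geometric argument showing that the traces of $\CA \setminus \CA_0$ on any region of $\CA_0$ look (up to projection onto a transversal line) like a pencil of $e_\ell$ points, hence subdivide the region into $e_\ell + 1$ pieces totally ordered by the choice of canonical base. Once these are in place, the factorization is a routine sum of geometric progressions.
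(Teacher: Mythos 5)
Your proof takes essentially the same approach as the paper's, which itself cites and adapts the argument of Bj\"orner--Edelman--Ziegler \cite[Thm.~4.4]{bjoerneredelmanziegler}: use Lemma~\ref{lem:modular1} to see that the hyperplanes of $\CA\setminus\CA_0$ do not cross inside any region of $\CA_0$, deduce that every fibre of $\pi$ is a chain of $e_\ell + 1$ regions, pick $B$ at the end of such a chain, and observe that the rank splits additively over the fibres so that $\zeta$ factors. One small correction: the fact that no hyperplane of $\CA_0$ separates two regions in a common fibre is a purely definitional consequence of $\pi$ being induced by inclusion of regions (if $H_0\in\CA_0$ separated $R$ and $R'$ it would separate $\pi(R)$ and $\pi(R')$), not an application of Lemma~\ref{lem:modular1}; modularity is needed --- and you do invoke it correctly in the near-pencil argument --- to guarantee that the $\CA\setminus\CA_0$-separators within a fibre form a chain and that every fibre has the full length $e_\ell$.
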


\begin{proof}
Since $X \in L(\CA)$ is modular of rank $r-1$,
any two 
hyperplanes in $\CA \setminus \CA_0$ 
do not intersect in 
a region of $\RR(\CA_0)$, 
by Lemma \ref{lem:modular1}. It follows that 
for any $R \in \RR$ the regions in $\FF(R)$  
are linearly ordered by adjacency and  
each fibre has the same length 
$e_\ell = | \CA \setminus \CA_0|$.
Fix a region $B$ in $\FF(R)$
at the end of such a linear chain of adjacencies
for some $R \in \CR$.
Then $B$  is canonical with respect to $\CA_0$.
Moreover, for $R \in \RR$, calculating the rank 
in $P(\CA,B)$ with respect to $B$, 
we have  
$\rk(R) = \rk_0(\pi(R)) + \rk_{\FF(R)}(R)$,
where $\rk_0(\pi(R))$ is the rank of $\pi(R)$ in 
$P(\CA_0,B_0)$ with respect to $B_0$ and 
$\rk_{\FF(R)}(R)$ is the rank of 
$R$ in its fibre which is induced from the rank in $P(\CA,B)$.
Hence the rank-generating function 
$\zeta(P(\CA,B), t)$ 
of the poset of regions $P(\CA,B)$ with respect to $B$ 
is the product of 
the rank-generating function 
$\zeta(P(\CA_0,B_0), t)$ 
of the poset of regions $P(\CA_0,B_0)$ with respect to $B_0$
and the rank-generating function for each fibre.
Thus, since $B$ is canonical with respect to $\CA_0$, (i) follows.

Finally, (ii) follows directly from (i).
\end{proof}

The relevance of Lemma \ref{lem:modular-zeta} stems from the
fact that it can be used to show that the rank generating function 
$\zeta(P(\CA,B), t)$ factors as in \eqref{eq:poinprod}
even when $\CA_0$ is not inductively factored 
(or supersolvable). We illustrate this with an example from 
the setting from 
\S \ref{s:idealtype} where we also use the notation from that section.

\begin{example}
\label{ex:modular-zeta}
Let $n \ge 5$ and $\Phi$ be of type $D_n$
and let $\Phi_0$ be the standard subsystem of type $D_{n-1}$.
Let $\CI$ be the ideal in $\Phi^+$ generated 
by $e_1 - e_j$
for $2 \le j \le n$.
Then Condition \ref{cond:linear} holds.
Note that with the notation from \S \ref{s:idealtype}, we have
$\CI_0 = \varnothing$, so that $\CA_{\CI_0} = \CA(D_{n-1})$
is the reflection arrangement of $\Phi_0$.
In particular, $\CA_{\CI_0}$ is not supersolvable and  
not inductively factored.
For $B$ a region of $\CA_\CI$ containing the fundamental dominant 
Weyl chamber of $\Phi$ and for $\CA_0 := \CA_{\CI_0}$ and 
$B_0 = \pi(B)$ as above, we see from \eqref{eq:ai} that 
$\zeta(P(\CA_0,B_0), t)$ is just 
the Poincar\'e polynomial of $W(D_{n-1})$ which factors
according to \eqref{eq:solomon}.
Therefore, by Lemma \ref{lem:modular-zeta}(ii), 
$\zeta(P(\CA_\CI,B), t)$ 
factors as in \eqref{eq:poinprod} as well.
As a consequence of \eqref{eq:ai}, 
Conjecture \ref{conj:st1} holds in these instances.
\end{example}

\subsection{The Poincar\'e polynomial  $\CI(t)$ and the rank-generating function of the poset of regions of $\CA_\CI$}
\label{ssec:rank}

There is a connection between the 
combinatorics of an 
arrangement of ideal type $\CA_\CI$, by way of 
the Poincar\'e polynomial $\CI(t)$ 
of $\CI$ and 
the geometry of the arrangement $\CA_\CI$,  
via the theory of 
rank-generating functions of the poset of regions of $\CA_\CI$,
as follows.

\begin{remarks}
\label{rem:weyl-regions}

(i).
In \cite[\S 12]{sommerstymoczko}, 
Sommers and Tymoczko observed that 
if we fix a region $B$ of the set of 
regions $\RR = \RR(\CA_\CI)$ of $\CA_\CI$ which contains the dominant 
Weyl chamber of $\Phi$, then thanks to \cite[Prop.\ 6.1]{sommerstymoczko},
the rank-generating function of the poset of regions $P(\CA_\CI,B)$ of 
$\CA_\CI$ with respect to $B$ 
is just the Poincar\'e polynomial $\CI(t)$ of 
$\CI$ introduced in \eqref{eq:st}: 
\begin{equation}
\label{eq:ai}
\zeta(P(\CA_\CI,B), t) 
= \sum_{R \in \RR} t^{\rk(R)} = \sum_{S \in \CW^I} t^{|S|} = \CI(t).
\end{equation}

(ii).
Thanks to Theorem \ref{thm:mult-zeta} and \eqref{eq:ai},
the Poincar\'e polynomial $\CI(t)$ of a 
supersolvable or inductively factored 
arrangement of ideal type $\CA_\CI$ 
satisfies \eqref{eq:st1} and thus for all 
supersolvable or inductively factored $\CA_\CI$ 
Conjecture \ref{conj:st1} holds.
Thus Theorem \ref{thm:d4ss} gives further 
evidence for this conjecture. 

(iii).
Theorem \ref{thm:st1-classical} follows from 
Theorems \ref{thm:AnBn} and \ref{thm:mult-zeta} 
combined with \eqref{eq:ai}.
\end{remarks}
 
In view of Remark \ref{rem:weyl-regions}(i), 
Theorem \ref{thm:zeta-factors} is an easy consequence of 
Lemmas \ref{lem:condition-modular} and \ref{lem:modular-zeta}.

\begin{proof}[Proof of Theorem \ref{thm:zeta-factors}]
Thanks to Remark \ref{rem:weyl-regions}(i), it suffices
to consider 
the rank-generating function  $\zeta(P(\CA_\CI,B), t)$ of the 
poset of regions of $\CA_\CI$
for a region $B$ of $\RR(\CA_\CI)$ containing 
the dominant Weyl chamber of $\Phi$.
If $\CA_\CI$ is reducible, then $\CA_\CI$ is the product
of two  arrangements of ideal type of smaller rank root systems.
The multiplicative property for $\zeta(P(\CA_\CI,B), t)$
then follows from \eqref{eq:posregprod}
and the inductive hypothesis made in the statement.

So suppose that $\CA_\CI$ is irreducible 
and that Condition \ref{cond:linear} holds.
It then follows from the inductive hypothesis and 
Lemmas \ref{lem:condition-modular} and \ref{lem:modular-zeta}(ii) that 
$\zeta(P(\CA_\CI,B), t)$ factors as in 
\eqref{eq:poinprod}, as claimed.
\end{proof}

Theorem \ref{thm:zeta-factors-count} follows equally readily.

\begin{proof}[Proof of Theorem \ref{thm:zeta-factors-count}]
In cases (i) and (ii), we argue as in 
the proof of Theorem \ref{thm:zeta-factors}.

If $\CI = \{\theta\}$ or $\CI = \varnothing$,
then the result follows from Theorem \ref{thm:st1-penultimate}
and Solomon's formula \eqref{eq:solomon}, respectively.

Because the very same instances have already been accounted for
in Table \ref{table:cond} in connection with 
our analysis of the inductively free $\CA_\CI$ 
for $\Phi$ of exceptional type in 
Theorem \ref{thm:indfree-exceptional},
the final statement follows from the determination of all the
instances covered in (i) and (ii) from Section \ref{S:indfree}.
\end{proof}

\begin{remarks}
\label{rem:non-mult-zeta}

(i).
The relevance in studying the class of inductively factored real arrangements
for our purpose lies in the fact that it is the largest class 
of free arrangements 
which satisfies the multiplicative formula \eqref{eq:poinprod}.
This formula does not hold in general 
for the larger class of inductively free arrangements.
For instance, the real simplicial arrangement 
``$A_4(17)$'' from Gr\"unbaum's list \cite{gruenbaum}
does not factor as in 
\eqref{eq:poinprod}, as observed by Terao, see 
\cite[p.~277]{bjoerneredelmanziegler}.
T.~Hoge has checked that this arrangement is 
inductively free.

Therefore, in the context of 
arrangements of ideal type,   
where we can't expect more than 
inductive freeness in general,
according to our discussion above,
the general factorization property from 
Theorem \ref{thm:zeta-factors}
(predicted for all $\CA_\CI$ by 
Conjecture \ref{conj:st1})
shows that the arrangements
of ideal type are rather special
in this regard.

(ii).
The multiplicative formula \eqref{eq:poinprod}
for a free real arrangement $\CA$ 
involving the exponents frequently plays a crucial 
role in related contexts. 

For instance, for a fixed $w$ in $W$, define 
its Poincar\'e polynomial by 
$P_w(t) = \sum_{x \le w}t^{\ell(x)}$,
where $\le$ is the Bruhat-Chevalley order on $W$.
E.g., for $w_0$ the longest word in $W$, 
$P_{w_0}(t) = W(t)$, see \eqref{eq:poncarecoxeter}.
In geometric terms, $P_w(t^2)$ is the 
Poincar\'e polynomial of the Schubert variety $X(w) = \overline{BwB}/B$
in the full flag manifold  $G/B$ 
of the semisimple Lie group $G$ 
associated with $w$ in the Weyl group $W$ of $G$.
 
Consider the subarrangement $\CA_w$ of the Weyl arrangement
$\CA(W)$ given by the hyperplanes corresponding to the 
set of roots 
$N(w)$ introduced in 
\eqref{eq:Nw}.
According to a criterion due to Carrell and Peterson
\cite{carrell}, 
the Schubert variety $X(w)$ is rationally smooth
if and only if $P_w(t)$ is palindromic.

All instances are known when $P_w(t)$
admits a factorization analogous to \eqref{eq:solomon}.
This is the case precisely when $X(w)$ is rationally smooth.
Equivalently, this is the case precisely when $\CA_w$ is free and  
the exponents then make an appearance in  
this analogue of \eqref{eq:solomon} for $P_w(t)$, 
cf.~\cite[Thm.\ 3.3]{slofstra:schubert}.
This is also the case precisely when the
rank-generating function of the 
poset of regions of $\CA_w$ coincides
with $P_w(t)$, \cite{ohyoo}.
\end{remarks}

\section{Supersolvable and inductively factored $\CA_\CI$}
\label{S:ssAI}

In this final section we 
prove Theorems \ref{thm:d4notss} and \ref{thm:d4ss}.
First we consider the situation for $\Phi$ of type $D_4$. 

\begin{lemma}
\label{lem:d4}
Let $\Phi$ be of type $D_4$. Then $h = 6$.
Then all arrangements of ideal type $\CA_\CI$
but the three corresponding to 
$\CI_6 = \varnothing$, $\CI_5$, and $\CI_4$
are supersolvable 
and all but the ones corresponding to 
$\CI_6 = \varnothing$ and $\CI_5$,
are inductively factored.
Thus of the $50$ arrangements of ideal type $\CA_\CI$ in type $D_4$,
$47$ are supersolvable and $48$ are inductively factored.
\end{lemma}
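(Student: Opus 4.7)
The plan is to partition the $50$ ideals $\CI \subseteq \Phi^+$ of $D_4$ (where $h = 6$) into three families and handle each in turn: those containing some simple root, those strictly positive for which Condition~\ref{cond:linear} applies with respect to some maximal parabolic subsystem, and finally the three exceptional ideals $\CI_6 = \varnothing$, $\CI_5 = \{\theta\}$, $\CI_4 = \{\theta, e_1+e_3\}$ appearing in the statement.

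If $\CI$ meets the simple system $\Pi$, then for every simple root $\alpha_i \in \CI$ the ideal contains all positive roots with positive $\alpha_i$-coefficient, so $\CI^c$ is contained in the positive roots of the parabolic subsystem $\Phi_0$ generated by $\Pi$ minus the simple roots in $\CI$. Thus $\CA_\CI$ factors as a product of an empty arrangement with an arrangement of ideal type in $\Phi_0$; since every proper parabolic sub-root system of $D_4$ is a product of type-$A$ components, Theorem~\ref{thm:AnBn} together with Propositions~\ref{prop:product-super} and \ref{prop:product-indfactored} gives both supersolvability and inductive factoredness of $\CA_\CI$. This disposes of the $30$ ideals meeting $\Pi$. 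For the strictly positive ideals I would examine the three maximal parabolic $A_3$-subsystems $\Phi_0$ of $D_4$ obtained by removing one of the three leaves $\alpha_1, \alpha_3, \alpha_4$ of the Dynkin diagram. A direct computation shows that for each such $\Phi_0$, the set $\Phi^c_0$ contains a unique positive root of each height in $\{1,2,4,5\}$ and exactly two positive roots of height $3$; hence Condition~\ref{cond:linear} holds for $\Phi_0$ precisely when $\CI$ contains at least one of those two height-$3$ roots. Running through the three choices, Condition~\ref{cond:linear} fails for all three $A_3$-subsystems iff $\CI$ is disjoint from the three positive roots of height $3$, namely $\{e_1-e_4, e_1+e_4, e_2+e_3\}$, which happens exactly for the three exceptional ideals listed above. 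For every other strictly positive $\CI$, Theorem~\ref{thm:I1I}(i) and (iii) together with Theorem~\ref{thm:AnBn} yield that $\CA_\CI$ is supersolvable and inductively factored, accounting for $17$ further cases.

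The three exceptional ideals are then treated individually. For $\CI_6 = \varnothing$, the full Weyl arrangement $\CA(D_4)$ is neither supersolvable nor inductively factored by Theorem~\ref{thm:ssW}. For $\CI_5$ and $\CI_4$, non-supersolvability is established by applying Lemma~\ref{lem:modular1} to show that no rank-$3$ element of $L(\CA_\CI)$ is modular, thereby excluding the decomposition demanded by Theorem~\ref{thm:super}. To rule out inductive factoredness of $\CA_{\CI_5}$ (whose exponents are $\{1,3,3,4\}$) I would enumerate all partitions of block shape $(1,3,3,4)$ and eliminate each via Definition~\ref{def:factored}, using the induced partitions on rank-$2$ intersections. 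Finally, for $\CA_{\CI_4}$ (with exponents $\{1,3,3,3\}$) I would produce an explicit nice partition of block sizes $(1,3,3,3)$ and verify that it is an inductive factorization by invoking Theorem~\ref{thm:add-del-factored} repeatedly along a suitable total order on $\CA_{\CI_4}$. The principal obstacle is this last construction: since $\CA_{\CI_4}$ is not supersolvable, its inductive factorization cannot arise from a modular chain, so the partition must be chosen by hand and the bijectivity condition on the restriction map at each inductive step must be checked directly.
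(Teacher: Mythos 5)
Your proposal is essentially the paper's proof, with one organizational difference.  The paper disposes of the non-exceptional ideals in three waves: $30$ ideals meeting $\Pi$ by the product decomposition, the strictly positive ideals containing a height-$2$ root again by a product decomposition (since such an $\CA_\CI$ factors off the $H_{\alpha_i}$ with $\alpha_i\in\Pi$ but $\alpha_i$ not lying under the height-$2$ root), and the remaining $7$ strictly positive ideals with minimal root of height $3$ via Condition~\ref{cond:linear} and Theorem~\ref{thm:I1I}(i).  You instead handle all $17$ strictly positive non-exceptional ideals in a single stroke by locating, for each, a maximal $A_3$-subsystem $\Phi_0$ for which Condition~\ref{cond:linear} holds; your characterization --- Condition~\ref{cond:linear} fails for every choice of $\Phi_0$ iff $\CI$ is disjoint from $\{e_1-e_4,e_1+e_4,e_2+e_3\}$, i.e.\ iff $\CI\subseteq\CI_4$ --- is correct and gives a slightly more uniform account than the height-by-height case split.  (The key point, which you correctly note and which one should verify, is that each of the three sets $\Phi_0^c$ has two incomparable roots of height $3$ and is totally ordered after removing either one; and for roots $\alpha,\beta$ both involving $e_j$ the third root $\gamma=\pm(\alpha-\beta)\in\Phi_0^+$ supplies the dependency.)  The treatment of the three exceptions then coincides with the paper's: both cite Theorem~\ref{thm:ssW} for $\CI_6$, both assert non-supersolvability and non-niceness of $\CA_{\CI_5}$ and non-supersolvability of $\CA_{\CI_4}$ by direct verification, and both establish inductive factoredness of $\CA_{\CI_4}$ by exhibiting an explicit partition of shape $(1,3,3,3)$ and invoking Theorem~\ref{thm:add-del-factored} along a chain starting from the supersolvable deletion $\CA_{\CI_4}\setminus\{H_{e_2+e_3}\}$.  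Your plan is sound; the only caveat is that it leaves the same three finite verifications (no modular rank-$3$ element of $L(\CA_{\CI_4})$ or $L(\CA_{\CI_5})$, non-niceness of $\CA_{\CI_5}$, and the explicit inductive factorization of $\CA_{\CI_4}$) as genuine computations to be carried out, exactly as the paper does.
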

 
\begin{proof}
According to 
Table \ref{table:no-ideals-classical}, 
there are $50$ arrangements of ideal type in this case and 
$20$ strictly positive ideals.
Since each proper parabolic subsystem of $D_4$ is a product of 
type $A$ subsystems, 
it follows from Theorem \ref{thm:AnBn} and 
Proposition \ref{prop:product-super}
(see Example \ref{ex:typeA})
that the $30$ arrangements $\CA_\CI$, 
where $\CI$ is not strictly positive are all supersolvable.

Next, consider all $\CI$ with $\CI \subseteq \CI_2$ and 
$\CI \not\subseteq \CI_3$. Then there is a root of height $2$ in $\CI$.
Again, since $\CA_\CI$ is a product of smaller rank arrangements 
of ideal type, it follows from Theorem \ref{thm:AnBn} and 
Proposition \ref{prop:product-super} that all 
$\CA_\CI$ of this kind are supersolvable. 

Next suppose that $\CI \subseteq \CI_3$ and 
$\CI \not\subseteq \CI_4$.
Then there is a root of height $3$ in $\CI$.
There are $7$ such ideals. 
One checks that there is always a rank $3$ subsystem $\Phi_0$
of type $A_3$, so that Condition \ref{cond:linear} is satisfied.
Thus, by Theorems \ref{thm:AnBn} and \ref{thm:I1I}(i),
each such arrangement $\CA_\CI$ is supersolvable.

Summarizing the observations above, 
$\CA_\CI$ is supersolvable
for any $\CI \not\subseteq \CI_4$, thanks to 
Theorems \ref{thm:AnBn} and \ref{thm:I1I}(i)
and Proposition \ref{prop:product-super}.
This applies to $47$ out of the $50$ ideals.
The $3$ ideals left to consider are
$\CI_4 = \{e_1 + e_3, e_1 + e_2\}$, 
$\CI_5 = \{e_1 + e_2\}$, 
and $\CI_6 = \varnothing$.
Of course, $\CA_{\CI_6} = \CA(D_4)$ is neither supersolvable,
nor inductively factored,  
e.g.\ see \cite[Ex.\ 5.5]{jambuterao:free} or 
Theorem \ref{thm:ssW}.
One checks directly that $\CA_{\CI_4}$ is not
supersolvable. 
In addition,
one can show that $\CA_{\CI_5}$ is not nice, so in particular, 
is not inductively factored, nor supersolvable.
In contrast, for $\CI_4$
one can show that
\begin{equation}
\label{eq:d4fac}
\{H_{e_2-e_3}\}, 
\{H_{e_3-e_4}, H_{e_2 -e_4}, H_{e_2 + e_3}\},
\{H_{e_1-e_2}, H_{e_1 -e_3}, H_{e_1 - e_4}\},
\{H_{e_3+e_4}, H_{e_2 +e_4}, H_{e_1 + e_4}\}
\end{equation}
is an inductive factorization of $\CA_{\CI_4}$, 
where the notation is as in \cite[Planche IV]{bourbaki:groupes}.
We show this directly as follows.
Let $\delta = e_2 + e_3$.
By the argument above, for $\CI = \langle \delta \rangle$
we have $\CA_\CI$ is supersolvable with exponents
$\exp \CA_\CI = \{ 1,2,3,3\}$.
Note that $\CI_4^c\setminus \{\delta\} = \CI^c$.
Consider the triple of $\CA_{\CI_4}$ with respect to $H_\delta$.
Then we have $( \CA_{\CI_4}, \CA_{\CI_4}' = \CA_\CI, \CA_{\CI_4}'')$.
Since $\CA_{\CI_4}' = \CA_\CI$ is supersolvable, it admits a  
canonical inductive factorization given by 
\[
\{H_{e_2-e_3}\}, 
\{H_{e_3-e_4}, H_{e_2 -e_4}\},
\{H_{e_1-e_2}, H_{e_1 -e_3}, H_{e_1 - e_4}\},
\{H_{e_3+e_4}, H_{e_2 +e_4}, H_{e_1 + e_4}\},
\]
stemming from a maximal chain of modular elements in $L(\CA_\CI)$, 
see Proposition \ref{prop:superindfactored}, see also 
\cite[Ex.\ 2.4]{terao:factored} and \cite[Prop.\ 3.11]{hogeroehrle:factored}.
Moreover, one checks that 
$\CA_{\CI_4}''$ is inductively factored with inductive factorization
given by  
\[
\{\bar H_{e_2-e_3}\}, 
\{\bar H_{e_1-e_2}, \bar H_{e_1 -e_3}, \bar H_{e_1 - e_4}\},
\{\bar H_{e_3+e_4}, \bar H_{e_2 +e_4}, \bar H_{e_1 + e_4}\}.
\]
It follows from Theorem \ref{thm:add-del-factored} that 
the partition of $\CA_\CI$ given in \eqref{eq:d4fac} 
defines an inductive factorization of $\CA_{\CI_4}$.

Thus precisely $47$ of the $50$
arrangements $\CA_\CI$ are supersolvable and 
precisely $48$ are inductively factored. 
\end{proof}

\begin{corollary}
\label{cor:d4-zeta}
Conjecture \ref{conj:st1}
holds for $\Phi$ of type $D_4$.
\end{corollary}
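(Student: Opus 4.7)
The plan is to handle the fifty arrangements of ideal type in $D_4$ in three groups, using the classification of supersolvable and inductively factored $\CA_\CI$ provided by Lemma~\ref{lem:d4}, together with the two uniform factorization results already available to us (Solomon's formula and Theorem~\ref{thm:st1-penultimate}).

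First I would dispose of the $48$ cases where Lemma~\ref{lem:d4} gives that $\CA_\CI$ is inductively factored. For these, Theorem~\ref{thm:mult-zeta} ensures that the rank-generating function $\zeta(P(\CA_\CI, B), t)$ of the poset of regions factors as a product $\prod(1 + t + \ldots + t^{e_i})$ over the exponents of $\CA_\CI$, for a suitable base region $B$. Choosing $B$ to contain the fundamental dominant Weyl chamber, Remark~\ref{rem:weyl-regions}(i) (formula \eqref{eq:ai}) identifies this rank-generating function with the Poincar\'e polynomial $\CI(t)$. Since by Theorem~\ref{thm:ideals} the non-zero exponents of $\CA_\CI$ are precisely the ideal exponents $m_i^\CI$, the resulting product is exactly the right-hand side of \eqref{eq:st1}.

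There remain only the two exceptional ideals from Lemma~\ref{lem:d4}, namely $\CI_6 = \varnothing$ and $\CI_5 = \{\theta\}$ (since $\theta = e_1 + e_2$ is the highest root in $D_4$). For $\CI = \varnothing$, the identity \eqref{eq:st1} specializes to Solomon's formula \eqref{eq:solomon} for $W(t)$, which holds by \cite{solomon:chevalley}. For $\CI = \{\theta\}$, the desired factorization is exactly Theorem~\ref{thm:st1-penultimate}, proved uniformly by Sommers and Tymoczko. Thus Conjecture~\ref{conj:st1} holds in all three remaining instances.

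There is no real obstacle here: the work has all been done in Lemma~\ref{lem:d4} (which reduced the problem to at most three exceptional ideals), and the two survivors happen to be precisely the two cases covered by the classical and penultimate-ideal theorems. The only point to verify carefully is the identification $\CI_5 = \{\theta\}$, which follows from $h = 6$ for $D_4$ and the fact that the single root of height $5$ in $D_4^+$ is the highest root $e_1 + e_2$.
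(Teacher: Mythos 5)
Your proof is correct and follows essentially the same route as the paper: use Lemma \ref{lem:d4} to reduce to $\CI$ with $\CA_\CI$ inductively factored (covered by Theorem \ref{thm:mult-zeta} together with the identification \eqref{eq:ai}), plus the two leftover cases $\CI = \{\theta\}$ and $\CI = \varnothing$ (handled by Theorem \ref{thm:st1-penultimate} and Solomon's formula \eqref{eq:solomon}, respectively). The extra observations you add — invoking Theorem \ref{thm:ideals} to identify the exponents with the ideal exponents, and checking $\CI_5 = \{\theta\}$ from $h = 6$ — are sound but merely make explicit what the paper leaves implicit.
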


\begin{proof}
Let $\Phi$ be of type $D_4$ and 
let $\CI$ be an ideal in $\Phi^+$.
It follows from Lemma \ref{lem:d4} that either $\CA_\CI$ is 
inductively factored, or else $\CI = \{\theta\}$, or $\CI = \varnothing$.
So the result follows from 
Theorem \ref{thm:mult-zeta} and \eqref{eq:ai},
Theorem \ref{thm:st1-penultimate}, and \eqref{eq:solomon}, respectively.
\end{proof}

\begin{remark}
\label{rem:st1-dn}
We note that one can show that out of all $\binom{2n-1}{n} + \binom{2n-2}{n}$
ideals in type $D_n$, there are just $2^{n-2}$ ideals 
for which Condition \ref{cond:linear} fails 
with respect to $\Phi_0$ being the standard subsystem of type $D_{n-1}$.
However, if $\CI$ is the largest such ideal, i.e.\ $\CI = \langle e_{n-2} + e_{n-1}\rangle$,
then $\CA_\CI$ is supersolvable, see Example \ref{ex:dn}.
Thus Conjecture \ref{conj:st1} holds in this case, by
Theorem \ref{thm:mult-zeta} and \eqref{eq:ai}.
Likewise, for $\CI = \{\theta\}$, or $\CI = \varnothing$,
Conjecture \ref{conj:st1} holds, 
by Theorem \ref{thm:st1-penultimate} and \eqref{eq:solomon}.

Thus, assuming the factorization result 
from Conjecture \ref{conj:st1} for $D_{n-1}$, 
it follows from Theorem \ref{thm:zeta-factors}
that it holds for all instances in $D_n$ as well
with at most $2^{n-2}-3$ exceptions.

A.\ Schauenburg was able to check that 
Conjecture \ref{conj:st1} always holds also in these 
additional 
$2^{n-2}-3$ cases for $5 \le n \le 7$ by direct computational means.
\end{remark}

We are now in a position to prove Theorems \ref{thm:d4notss} and \ref{thm:d4ss}.

\begin{proof}
[Proof of Theorem \ref{thm:d4notss}]
We prove both parts simultaneously.
For $\Phi$ of type $D_4$, the result follows from 
Lemma \ref{lem:d4}.
Now let $\Phi$ be as in the statement of rank at least $5$
and let $\Phi_0$ be the standard subsystem of $\Phi$ of type $D_4$.
Since, $\CI_0 = \CI \cap \Phi_0^+$ also consists of roots of height at least $4$,
respectively $5$, by the result for $D_4$, $\CA_{\CI_0}$ is not supersolvable, 
respectively not inductively factored.
It now follows from  Corollary \ref{cor:ssideal}(i) and (iii)
that the same holds for $\CA_\CI$.
\end{proof}

\begin{remark}
\label{rem:d4notnice}
We can strengthen Theorem \ref{thm:d4notss}(ii) slightly as follows.
Thanks to Lemma \ref{lem:d4}, $\CA_\CI$ is not nice for 
$\CI \subseteq \CI_5$.
This also holds for $\Phi$ as in the statement of 
Theorem \ref{thm:d4notss}, 
arguing as in the proof above and using
Remark \ref{rem:factored} and Lemma \ref{lem:ideallocal}. 
\end{remark}

\begin{proof}
[Proof of Theorem \ref{thm:d4ss}]

(i).
We argue by induction on the rank.
For $\Phi$ of type $D_4$, the result follows from Lemma \ref{lem:d4}.
Now let $\Phi$ be of 
type $D_n$ for $n \ge 5$, $E_6$, $E_7$, or $E_8$ and 
suppose that the result
is true for smaller rank root systems of these kinds.

If $\CI$ contains a root of height $1$ or $2$,
then it follows that $\CA_\CI$ is the product of 
arrangements of ideal type for smaller rank root systems
of types $A$, $D$ or $E$ and 
each ideal corresponding to a factor still contains all roots of 
height $3$ of the smaller rank root system.
Thus by induction on the rank, Theorem \ref{thm:AnBn}, and 
Proposition \ref{prop:product-super}, the product $\CA_\CI$ is also 
supersolvable. 

It remains to consider the case when $\CI = \CI_3$.
So assume $\CI = \CI_3$ 
and let $\Phi_0$ be the standard subsystem of $\Phi$ of type $D_{n-1}$,
$D_5$, $D_6$, or $D_7$, respectively.
Then Condition \ref{cond:linear} is 
satisfied. Therefore, since 
$\CA_{\CI_0}$ is supersolvable by induction 
(for $\CI_0$ is the ideal $\Phi_0^+ \cap \CI_3$ in $\Phi_0$), 
so is $\CA_\CI$, thanks to Theorem \ref{thm:I1I}(i).

(ii).
Again, we argue by induction on the rank.
For $\Phi$ of type $D_4$, the result follows 
again from Lemma \ref{lem:d4}.
As above, if $\CI$ also contains a root of height $1$ or $2$, then 
$\CA_\CI$ is the product of 
arrangements of ideal type for smaller rank root systems
of types $A$, $D$ or $E$ and 
each ideal corresponding to a factor still contains all roots of 
height $4$ of the smaller rank root system.
Thus by induction on the rank, Theorem \ref{thm:AnBn}, and 
Proposition \ref{prop:product-indfactored}, the product $\CA_\CI$ is also 
inductively factored. 

Now suppose that $\CI \subseteq \CI_3$ but $\CI \not\subseteq \CI_4$, 
so that $\CI$ still contains a root of height $3$ but none of 
smaller height.

Suppose first that $\Phi$ is of type $D_n$ for $n \ge 5$.
If $\CI$ contains $e_i - e_{i+3}$ for some $1 \le i \le n-3$ or 
$e_{n-3} + e_n$, then for $\Phi_0$ the standard subsystem of
type $D_{n-1}$, Condition \ref{cond:linear} holds.
Therefore, since $\CI_0$ still contains all roots of height 
$4$ in $\Phi_0^+$, it follows from induction on $n$ that 
$\CA_{\CI_0}$ is inductively factored. Thus so is
$\CA_\CI$, thanks to Theorem \ref{thm:I1I}(iii).
Finally, if the only root of height $3$ in $\CI$ is $e_{n-2} + e_{n-1}$,
then let $\Phi_0$ be one of the standard subsystems of 
type $A_{n-1}$ in $\Phi$. 
Again it is easy to see that then Condition \ref{cond:linear} 
is satisfied. Thus, thanks to 
Theorem \ref{thm:AnBn}, 
$\CA_{\CI_0}$ is supersolvable and 
so is $\CA_\CI$, according to Theorem \ref{thm:I1I}(i),
and so $\CA_\CI$ is inductively factored, 
by Proposition \ref{prop:superindfactored}.

Now suppose $\Phi$ is of type $E$. Since $\CI$ admits a root of 
height $3$, it follows from Table \ref{table:height3} that there always
exists a maximal parabolic subsystem $\Phi_0$ so that 
Condition \ref{cond:linear} holds.
Since $\Phi_0$ is of type $A$, $D$ or $E$,
and $\CI_0$ still contains all the roots of height $4$ of 
$\Phi_0^+$, so that 
by induction on the rank, Theorem \ref{thm:AnBn}
and the case for type $D_n$ just proved, 
$\CA_{\CI_0}$ is inductively factored, 
then so is $\CA_\CI$, according to  
Theorem \ref{thm:I1I}(iii).

It remains to consider the case when $\CI = \CI_4$.
So let $\CI = \CI_4$.
Suppose first that $\Phi$ is of type $D_n$ for $n \ge 5$.
Since $\CI$ contains $e_1 - e_5$, 
Condition \ref{cond:linear} is valid 
for $\Phi_0$ the standard subsystem of type $D_{n-1}$.
Since $\CI_0$ is the ideal in $\Phi_0^+$ consisting of 
all roots of height at least $4$, it follows 
from induction on $n$ that 
$\CA_{\CI_0}$ is inductively factored, and thus so is
$\CA_\CI$, thanks to Theorem \ref{thm:I1I}(iii).

Finally, for $\Phi$ of type $E$ and $\CI = \CI_4$, 
it follows from Table \ref{table:height4}
there always
exists a maximal parabolic subsystem $\Phi_0$ so that 
Condition \ref{cond:linear} holds.
Since $\Phi_0$ is of type $A$, $D$ or $E$,
and $\CI_0$ is the ideal in $\Phi_0^+$ consisting of 
all roots of height at least $4$, 
it follows from 
induction on the rank, Theorem \ref{thm:AnBn}
and the case for type $D_n$ just proved
that $\CA_{\CI_0}$ is inductively factored, and 
once again, so is $\CA_\CI$, 
owing to Theorem \ref{thm:I1I}(iii).
\end{proof}


\bigskip {\bf Acknowledgments}: 
The research of this work was supported by 
DFG-grant RO 1072/16-1.

I am grateful to T.~Hoge for checking that 
the possible exceptions in Table \ref{table:cond} 
in type $F_4$ (20 cases), $E_6$ (62 cases) 
and $E_7$ (727 cases) are all inductively free,
as predicted by Conjecture  \ref{conj:indfree}. 
He also checked that the simplicial arrangement 
``$A_4(17)$'' from Gr\"unbaum's list 
is inductively free.

Thanks are also due to 
A.~Schauenburg for computing 
the data in Table \ref{table:noI} 
for the cases when $\CI \subseteq \CI_t$ for $t \ge 3$
and for checking that 
Conjecture \ref{conj:st1} holds for $D_n$ for $5 \le n \le 7$
and for $E_7$.


\bigskip

\bibliographystyle{amsalpha}

\newcommand{\etalchar}[1]{$^{#1}$}
\providecommand{\bysame}{\leavevmode\hbox to3em{\hrulefill}\thinspace}
\providecommand{\MR}{\relax\ifhmode\unskip\space\fi MR }
\providecommand{\MRhref}[2]{%
  \href{http://www.ams.org/mathscinet-getitem?mr=#1}{#2} }
\providecommand{\href}[2]{#2}


\end{document}